\newtheorem{theorem}{Theorem}[section]
\newtheorem{lemma}[theorem]{Lemma}
\newtheorem{proposition}[theorem]{Proposition}
\newtheorem{corollary}[theorem]{Corollary}
\newcommand{\ip}[2]{\left\langle #1, #2 \right\rangle}
\newcommand{\R}{{\mathbb R}}
\newcommand{\Rp}{{\mathbb R}_+}
\newcommand{\lb}{\left(}
\newcommand{\rb}{\right)}
\newcommand{\lsb}{\left[}
\newcommand{\rsb}{\right]}
\newcommand{\p}{\partial}
\renewcommand{\O}[1]{\mathrm{O}\lb #1\rb}
\begin{document}
\title{Neck Pinching Dynamics Under Mean Curvature Flow}
\author{Zhou Gang$^{*}$, Israel Michael Sigal\thanks{Supported by NSERC under Grant NA7901.}}\maketitle
\centerline{Department of Mathematics, University of Toronto,
Toronto, Canada, M5S 2E4}
\setlength{\leftmargin}{.1in}
\setlength{\rightmargin}{.1in} 
%
%
\normalsize \vskip.1in \setcounter{page}{1}
\setlength{\leftmargin}{.1in} \setlength{\rightmargin}{.1in}
\section*{Abstract}
In this paper we study motion of surfaces of revolution under the
mean curvature flow. For an open set of initial conditions close to
cylindrical surfaces we show that the solution forms a ``neck''
which pinches in a finite time at a single point. We also obtain a
detailed description of the neck pinching process.
\section{Introduction}
In this paper we study motion of surfaces of revolution under the
mean curvature flow. The mean curvature flow of an initial
hypersurface $M_{0}\in \mathbb{R}^{d+1}$ parameterized by $\psi_{0}:
U\rightarrow M_0$ is a family of hypersurfaces $M_{t}\in
\mathbb{R}^{d+1}$ whose local parametrizations
$\psi(\cdot,t):U\rightarrow \mathbb{R}^{d+1}$ satisfy the partial
differential equation
$$
\partial_{t}\psi(z,t)=-h(\psi(z,t))
$$ where $h(y)$ is the mean curvature vector of $M_t$ at a point
$y\in M_{t}$, with the initial condition $$\psi(z,0)=\psi_{0}(z).$$

If $d\geq 2$ and an initial surface $M_{0}$ is a surface of
revolution around the axis $x=x_{d+1}$, given by a map $r=u_{0}(x)$
where $r=(\sum_{j=1}^{d}x_{j}^{2})^{ \frac{1}{2} }$, then the
surface $M_{t}$ is also a surface of revolution and, as long as it
is smooth, it is defined by the map $r=u(x,t)$ which
satisfies the partial differential equation
\begin{equation}\label{eq:MCF}
\begin{array}{lll}
\partial_{t}u&=&\frac{\partial^{2}_{x}u}{1+(\partial_{x}u)^{2}}-\frac{d-1}{u}\\
u(x,0)&=&u_{0}(x).
\end{array}
\end{equation} This equation follows from the mean curvature equation above
by a standard computation.

The initial conditions for (1) can be divided into two basic groups.
In the first group, $u_0(x)>0$ for $a<x<b$ and either
$u_0(a)=u_0(b)=0$ or $\partial_xu_0(a)=\partial_xu_0(b)=0$, for some
$-\infty<a<b<\infty$. In the second group, $u_0(x)>0\ \forall x\in
\mathbb{R}$ and $\liminf_{|x|\to\infty}u_0(x)>0$.  In the first case
we deal with compact or periodic initial surfaces and Eqn(1) is
considered on the bounded interval $[a,b]$ with the Dirichlet or
Neumann boundary conditions. In the second case, the initial surface
as well as solution surfaces
are noncompact and Equation (~\ref{eq:MCF}) should be considered on
$\mathbb{R}.$ In this paper we study the second, more difficult case
and consequently we consider Eqn \eqref{eq:MCF} on $\mathbb{R}$. Our
goal is to describe the phenomenon of collapse or neckpinching of
such surfaces.
We say $u(x,t)$ collapses at time $t^*$ if
$\|\frac{1}{u(\cdot,t)}\|_{\infty}<\infty$ for $t<t^*$ and
$\|\frac{1}{u(\cdot,t)}\|_{\infty}\rightarrow \infty$ as
$t\rightarrow t^*$.

The study of the mean curvature flow goes back at least to  the work
of Brakke ~\cite{MR485012}. The short time existence in $L^{\infty}$
was proved in ~\cite{MR485012,Huis1,MR1770903,MR1189906}. In
~\cite{Huis1,MR837523} Huisken has shown that compact convex
surfaces shrink under the mean curvature flow into a point
approaching spheres asymptotically. In some of the first works on
collapse Grayson ~\cite{MR1016434} and Ecker ~\cite{MR1139032} have
constructed rotationally symmetric barriers which can be used to
determine a class of $2-$dimensional hypersurfaces of barbell shape
which develop a singularity under the mean curvature flow before
they shrink to a point.

Huisken ~\cite{Huis2} showed that periodic rotationally symmetric,
positive mean curvature surfaces of the barbell shapes always
develop singularities in finite time  $t^*$,
and that their blow-up at a point, where the maximal curvature blows
up, converges to a cylinder of unit radius.  (No information on the
set of blow-up points was given.) These results were generalized to
higher dimensions in ~\cite{SM}.


Dziuk and Kawohl ~\cite{MR1122308} showed that periodic surfaces of
revolution of positive mean curvature which have one minimum per
period and satisfy certain monotonicity conditions, including one on
the derivative of curvature, pinch at exactly the point of minimum.

H.M.Soner and P.E.Souganidis \cite{SS} considered Equation
(~\ref{eq:MCF}) on a bounded, symmetric interval and showed that if
$u(x,t)$ is even and satisfies $x\partial_xu(x,t) \ge 0$ (i.e. $u$
has a single minimum at $x=0$), then,
along a subsequence,
\begin{equation}\label{eq:conv}
(t^*-t)^{-\frac{1}{2}}u((t^*-t)^{-\frac{1}{2}}y,t)\rightarrow\sqrt{2(d-1)},
\end{equation}
as $t\rightarrow t^*$ (a compactness result).
Smoczyk ~\cite{SMO} showed pinching of certain periodic rotationally
symmetric surfaces with the mean curvatures greater than 2 , which
are embedded in Euclidean space.

S.Altschuler, S.B.Angenent and Y.Giga ~\cite{AlAnGi} have showed
that any compact, connected, rotationally symmetric hypersurface
that pinches under the mean curvature flow  does so at finitely many
discrete points.

A collapsing solution is called of type I if the square root, $|A|$,
of the sum of squares of principal curvatures is bounded as $|A| \le
C(t^*-t)^{-\frac{1}{2}}$. Otherwise, it is called of type II (see
Huisken \cite{MR1482035}). It was conjectured that the generic
collapse is of type I. Indeed, all collapses investigated in the
papers above are of type I.

S.B.Angenent and J.J.L.Vel\'azquez ~\cite{MR1427656} have
constructed non-generic, type II solutions, first suggested by
R.Hamilton and investigated by the level-set methods of Evans and
Spruck and Chen, Giga and Goto in
~\cite{MR1770903,MR1100211,AlAnGi}, neckpinching at $x=0$ at a
prescribed time $t^*$. Their solutions have the asymptotics, as
$t\rightarrow t^*$,
%
$$ (t^*-t)^{-\frac{1}{2}}u((t^*-t)^{-\frac{1}{2}}y,t)=\sqrt{2(d-1)}
+(t^*-t)^{\frac{m}{2}-1}\frac{K}{2\sqrt{2(d-1)}}H_m(y)
+o((t^*-t)^{\frac{m}{2}-1})$$
where $K>0,\ m$ is an odd integer $ \ge 3$ and $H_m(y)$ is a
multiple of the $m$th Hermite polynomial.

Athanassenas ~\cite{MR1456315,MR1979113} has shown neckpinching of
certain class of rotationally symmetric surfaces under the volume
preserving modification of the mean curvature flow.   The latter was
also studied in Alikakos and Frere ~\cite{MR2029906}.

For other related works we refer to
~\cite{MR840401,MR1216584,WMTao,MR1666878,MR1979113}.

Most of these works rely on parabolic maximum principle going back
to Hamilon ~\cite{MR664497} and monotonicity formulae for an entropy
functional (Huisken ~\cite{Huis2}, Giga and Kohn ~\cite{GK1}).


The scaling and asymptotics in (~\ref{eq:conv}) originate in the
following key properties of (~\ref{eq:MCF}):
\begin{enumerate}
 \item (~\ref{eq:MCF}) is invariant
 with respect to the scaling transformation,
 \begin{equation}\label{rescale}
 u(x,t)\rightarrow \lambda
u(\lambda^{-1} x,\lambda^{-2} t)
\end{equation} for any constant $\lambda>0,$ i.e. if
$u(x,t)$ is a solution, then so is $\lambda u(\lambda^{-1}
x,\lambda^{-2}t).$
 \item (~\ref{eq:MCF}) has $x-$independent (cylindrical)
 solutions:
\begin{equation}\label{eq:homegeous}
 u_{cyl}=[u_{0}^{2}-2(d-1)t]^{\frac{1}{2}}.
\end{equation} These solutions collapse in finite time
 $t^* =\frac{1}{2(d-1)}u_{0}^{2}$.
\end{enumerate}

In this paper we consider Equation \eqref{eq:MCF} with initial
conditions which are positive, have, modulo small perturbations,
global minima at the origin, are slowly varying near the origin and
are even. The latter condition is of a purely technical nature and
will be addressed elsewhere. We show that for such initial
conditions the solutions collapse in a finite time and we
characterize asymptotic dynamics of the collapse. As it turns out,
the leading term is given by the expression
\begin{equation}\label{eq:decompU}
u(x,t)=\lambda(t)[(\frac{2(d-1)+b(t)\lambda^{-2}(t)x^{2}}{c(t)})^{\frac{1}{2}}+\zeta(x,t)]
\end{equation}
with the parameters $\lambda(t), \ b(t)$ and $c(t)$ satisfying the
estimates
\begin{equation}\label{eq:para}
\begin{array}{lll}
\lambda(t)&=&(t^*-t)^{\frac{1}{2}}(1+o(1));\\
& &\\
b(t)&=&-\frac{d-1}{ln|t^*-t|}(1+O(\frac{1}{|ln|t^*-t|}|^{3/4}));\\
& &\\
c(t)&=&1+\frac{1}{ln|t^*-t|}(1+O(\frac{1}{ln|t^*-t|})).
\end{array}
\end{equation}
Here
$\lambda_{0}=\frac{1}{\sqrt{2\varsigma_{0}+\frac{\varepsilon_{0}}{d-1}}}$
with $\varsigma_0,\ \varepsilon_{0}>0$ depending on the initial
datum and $o(1)$ is in $t^*-t$. Moreover, we estimate the remainder
$\zeta(x,t)$ as
\begin{equation}\label{eq:EstRemainder}
\sum\limits_{m+n=3,n\leq2} \|\langle
\lambda^{-1}(t)x\rangle^{-m}\partial^n_x\zeta(x,t)\|_{\infty}\leq c
b^{2}(t)
\end{equation} for some constant $c.$

To give more precise formulation of results we introduce some
notation. Let $L^{\infty}$ denote the space $L^{\infty}(\mathbb{R})$
with the standard norm $\|u\|_{\infty}=\sup_{x}|u(x)|$. To formulate
our main result we define the spaces $L_{m,n}^{\infty}$ with the
norm
$$\|u\|_{m,n}=\|\langle x\rangle^{-m}\partial_{x}^{n}u(x)|\|_{\infty}$$ and define
the function $g(x,b),\ b>0,$ as
\begin{equation}\label{eq:Lower}
g(x,b):=\left\{
\begin{array}{lll}
\frac{9}{10}\sqrt{2(d-1)}\ \text{if}\ b x^{2}< 20(d-1)\\
4\sqrt{d-1}\ \ \ \ \ \ \text{if}\ \ b x^{2}\geq 20(d-1)\ .
\end{array}
\right.
\end{equation} We will also deal, without specifying it, with weak
solutions of Equation (~\ref{eq:MCF}) in some appropriate sense (see
the next section for more precise formulation). These solutions can
be shown to be classical for $t>0.$ The following is the main result
of our paper.
\begin{theorem}\label{maintheorem}
Assume the initial datum $u_{0}(x)$ in (~\ref{eq:MCF}) is even and
satisfy for $(m,n)=(3,0),$ $(\frac{11}{10},0)$, $(1,2)$ and $(2,1)$
the estimates
\begin{equation}\label{eq:INI2}
\begin{array}{ccc}
&
&\|u_{0}(x)-(\frac{2(d-1)+\varepsilon_{0}x^{2}}{2\varsigma_{0}})^{\frac{1}{2}}\|_{m,n}\leq
C \varepsilon_{0}^{\frac{m+n+1}{2}},\\
& & \\
& &u_{0}(x)\geq
\frac{1}{\sqrt{2\varsigma_{0}+\frac{\varepsilon_{0}}{d-1}}}
g(\sqrt{2\varsigma_{0}+\frac{\varepsilon_{0}}{d-1}}x,
\frac{\varepsilon_{0}}{2\varsigma_{0}}),
\end{array}
\end{equation} $\langle x\rangle^{-1}u_{0}\in  L^{\infty},$ $\partial_{x}u_{0}\in L^{\infty}$,
$|\partial_{x}u_{0}u^{- \frac{1}{2} }_{0}|\leq \kappa_{0}
\varepsilon^{ \frac{1}{2} }_{0},$ $|\partial_{x}^{n}u_{0}|\leq
\kappa_{0}\varepsilon_{0}^{n/2}$, $n=2,3,4,$ for some $C,\
\kappa_{0}\geq 2$, and $ \frac{1}{2} \leq \varsigma_{0}\leq 2$.
There exists a constant $\delta$ such that if $\varepsilon_{0}\leq
\delta$, then
\begin{enumerate}
\item[(i)] there exists a finite time $t^*$ such that $\|\frac{1}{u(\cdot,t)}\|_{\infty}<\infty$
for $t<t^*$ and $\lim_{t\rightarrow
t^*}\|\frac{1}{u(\cdot,t)}\|_{\infty}\rightarrow \infty;$
\item[(ii)] there exist $C^{1}$ functions $\zeta(x,t)$, $\lambda(t),\ c(t)$ and $b(t)$
such that (~\ref{eq:decompU}) and (~\ref{eq:EstRemainder}) hold;
\item[(iii)] the parameters $\lambda(t), \ b(t)$ and $c(t)$ satisfy the estimates
(~\ref{eq:para});
\item[(iv)] if
$u_{0}\partial_{x}^{2}u_{0}\geq -1$ then there exists a function
$u_*(x)>0$ such that $u(x,t)\geq u_*(x)$ for
$\mathbb{R}\backslash\{0\}$ and $t\leq t^*$. Moreover, if the mean
curvature of the initial surface is non-negative, i.e., if
$\frac{\partial_{x}^{2}u_{0}}{1+(\partial_{x}u_{0})^{2}}-\frac{d-1}{u_0}\leq
0$, then for any $x$, $\lim_{t\rightarrow t^{*}}u(x,t)$ exists and
is positive $\forall x\neq 0$.
\end{enumerate}
\end{theorem}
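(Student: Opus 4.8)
The plan is to pass to the self-similar (blow-up) variables that exploit the scaling invariance \eqref{rescale} and to set up a modulation-theoretic analysis around the family of approximate profiles $\phi_{b,c}(y):=\big(\tfrac{2(d-1)+b y^2}{c}\big)^{1/2}$. Concretely, I would introduce a time-dependent scale $\lambda(t)$ and rescaled variables $y=\lambda^{-1}(t)x$, $\tau=\tau(t)$ (with $d\tau/dt=\lambda^{-2}$), and write $u(x,t)=\lambda(t)\,v(y,\tau)$; then $v$ solves a rescaled equation in which the cylinder $v\equiv\sqrt{2(d-1)}$ is a fixed point and the extra drift term $-\tfrac12(\dot\lambda\lambda) y\,\partial_y v$ appears. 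Writing $v=\phi_{b(\tau),c(\tau)}+\zeta$, I would impose orthogonality (modulation) conditions on $\zeta$ against the low-lying modes of the linearized operator — essentially the constant and the $y^2$ (Hermite $H_2$) directions — which fixes the ODEs governing $\lambda,b,c$, while $\zeta$ carries the remaining, ``infinite-codimension'' part of the solution.

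Next I would derive the modulation ODEs for $(\lambda,b,c)$. The key structural fact, already visible in the ansatz \eqref{eq:decompU}–\eqref{eq:para}, is that $b$ and $c-1$ decay only logarithmically: the linearized operator around the cylinder in the rescaled equation is (up to conjugation) a harmonic-oscillator-type operator whose relevant eigenvalue in the $b,c$ sector is zero (the $H_2$ mode is marginal), so the $b$-dynamics is driven by the quadratic self-interaction of $\phi_b$ through the nonlinearity $\tfrac{1}{1+(\partial_x u)^2}$ and the $-\tfrac{d-1}{u}$ term, producing $\dot b \sim -c_0 b^2$ type behaviour, hence $b(\tau)\sim (d-1)/\tau$ and $\tau\sim |\ln|t^*-t||$. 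This yields \eqref{eq:para} for $b$ and $c$, and integrating $\dot\lambda/\lambda$ gives $\lambda(t)=(t^*-t)^{1/2}(1+o(1))$; finite-time collapse in (i) follows because $\lambda(t)\to0$ in finite time, so $\|1/u\|_\infty\gtrsim 1/\lambda(t)\to\infty$, and conversely one must show $1/u$ stays bounded before $t^*$ from the lower barrier $g$ propagating along the flow.

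The analytic heart is the a priori control of the remainder $\zeta$ in the weighted norms of \eqref{eq:EstRemainder}, i.e. a bootstrap showing $\sum_{m+n=3,\,n\le2}\|\langle y\rangle^{-m}\partial_y^n\zeta\|_\infty \le c\,b^2$. For this I would run a fixed-point / continuity argument on a time interval where this bound holds a priori: write Duhamel's formula for $\zeta$ using the semigroup generated by the linearized operator (a Schrödinger-type operator with harmonic confinement whose spectrum, after removing the modulated modes, has a favourable gap), estimate the propagator in the weighted $L^\infty$ spaces $L^\infty_{m,n}$, and show that the source terms — the error $\mathcal N(\phi_{b,c})$ of the approximate profile, which is $O(b^2)$ by construction, plus the quadratic-in-$\zeta$ nonlinearity, plus the modulation terms $\dot b\,\partial_b\phi+\dot c\,\partial_c\phi$ — all reproduce the $O(b^2)$ bound. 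The pointwise/weighted estimates are needed precisely because the problem is on all of $\mathbb R$ and the profile grows like $|y|$, so no decaying $L^2$-based norm is available near infinity; this is where the hypotheses on $u_0$ in \eqref{eq:INI2} (the weighted closeness for $(m,n)=(3,0),(11/10,0),(1,2),(2,1)$ and the derivative bounds $|\partial_x^n u_0|\le\kappa_0\varepsilon_0^{n/2}$) are exactly calibrated to initialize the bootstrap. I expect the main obstacle to be this weighted-$L^\infty$ propagator estimate together with closing the bootstrap against the slowly (merely logarithmically) decaying parameter $b$: the smallness that drives the contraction is $b^2$, not a fixed small constant, so one must track constants carefully and use the spectral gap of the linearized operator to absorb the marginal directions that are not killed by the modulation conditions. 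Finally, part (iv): the lower bound $u(x,t)\ge u_*(x)>0$ away from the origin follows from the maximum principle applied to $w:=u\partial_x^2 u$ or directly to $u$ using a stationary (or self-similar) subsolution barrier under the sign condition $u_0\partial_x^2 u_0\ge -1$; and if the initial mean curvature is nonpositive, the mean curvature stays nonpositive along the flow (again by the maximum principle for the evolution equation satisfied by the mean curvature), which forces $\partial_t u\le 0$ pointwise, so $u(x,\cdot)$ is monotone decreasing and bounded below, hence $\lim_{t\to t^*}u(x,t)$ exists and is positive for $x\ne0$.
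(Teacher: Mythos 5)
Your blueprint matches the paper's strategy almost exactly: self-similar rescaling with a free scale $\lambda(t)$, a modulation decomposition $v=V_{ab}+\phi$ (your $\phi_{b,c}+\zeta$) with orthogonality against the low-lying Hermite-type modes, derivation of modulation ODEs with the $\dot b\sim -b^2$ (hence $b\sim(d-1)/\tau$) law, Duhamel estimates in weighted $L^\infty$ spaces closed by a bootstrap calibrated against the logarithmically decaying majorant $\beta(\tau)$, and maximum-principle barriers for the lower bound and for part~(iv) (the paper works with $\varpi=\tfrac{u\partial_x^2u}{1+(\partial_xu)^2}+1$, essentially your $w=u\partial_x^2u$ shifted). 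You also correctly locate the difficulty in the non-compactness/linear growth of the profile, which is what forces the weighted $L^\infty$ framework.

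There is, however, one genuine gap that your sketch glosses over. After the gauge transform $w=e^{-a y^2/4}v$ (which you don't mention, but which is what produces the "harmonic confinement" you invoke), the resulting linear operator
\[
L(a,b)=-\partial_y^2+\frac{a^2+a_\tau}{4}y^2-\frac{3a}{2}-\frac{(d-1)(a+\tfrac12)}{2(d-1)+by^2}
\]
has \emph{time-dependent} coefficients — both the oscillator frequency $a^2+a_\tau$ and the potential through $b(\tau)$ vary in $\tau$. It therefore does not generate a semigroup, and you cannot simply "write Duhamel's formula using the semigroup generated by the linearized operator" as if the operator were autonomous. The paper resolves this with a nontrivial device in Section~\ref{SEC:Rescale}: for each target time $T$ one introduces a frozen trajectory $\lambda_1$ tangent to $\lambda$ at $\tau=T$ (so $\alpha:=a(T)$ is constant), changes variables again to $(z,\sigma)$, obtains a genuinely autonomous reference operator $\mathcal{L}_\alpha=L_\alpha+V$ with uniform propagator bounds (Proposition~\ref{PRO:propagator}), and then estimates the mismatch $\lambda/\lambda_1-1=O(\beta)$ (Proposition~\ref{NewTrajectory}) together with the extra perturbative term $\mathcal{W}\eta$ this generates. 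Without something like this tangent-trajectory freezing — or an independent argument giving decay for the non-autonomous evolution — the bootstrap does not close, and this is precisely the place where the "track constants carefully" concern you flag becomes an actual obstruction rather than bookkeeping. A second, smaller omission is that the derivative bounds $|\partial_y^n v|\lesssim\beta^{n/2}$ needed to control the quasilinear terms (the $(\partial_y v)^2$ in the diffusion coefficient) are themselves obtained by a separate maximum-principle propagation argument (Proposition~\ref{PROP:UppLow}), not merely from the modulation bootstrap; these two halves feed into each other and both must be carried simultaneously.
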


Thus our main new results are
\begin{enumerate}
\item[1)] Proof of neckpinching and neckpinching asymptotics at a single
given point for a new open set of initial conditions, which includes
in particular surfaces whose mean curvature changes sign and which
might have many necks,
\item[2)] Determination of the subleading term in the
asymptotic and estimation of the remainder.
\end{enumerate}
Remarks
\begin{enumerate}
\item[1)] A result similar to (iv) but for a different set of
initial conditions (see above) was proven in H.M.Soner and
P.E.Souganidis \cite{SS};
\item[2)]One can compute more precise asymptotics of the parameters
$\lambda(t), \ b(t)$ and $c(t)$;
\item[3)]It is not hard to show using \eqref{eq:decompU}-\eqref{eq:EstRemainder}
that the collapse in our case is of type I.
\end{enumerate}

The previous result closest to our result is that by Angenent and
Knopf ~\cite{MR2092903,SK2} on the neckpinching for the Ricci flow
of $SO(n+1)-$ invariant metrics on $S^{n+1}.$

Our techniques are different from those in the papers mentioned
above. They rely to much lesser degree on the maximum principle and
they do not use entropy monotonicity formulae. Our main point is
that we do not fix the time-dependent scale in the self-similarity
(collapse) variables but let its behaviour, as well as behaviour of
other parameters ($b$ and $c$), be determined by the original
equation. Then we use a nonlinear Lyapunov-Schmidt decomposition
(the modulation method) and the method of majorants together with
powerful linear estimates. We expect that our techniques can be
extended to non-axisymmetric surfaces and to Ricci flows.

This paper is organized as follows. In Section ~\ref{SEC:localWell}
we prove the local well-posedness of Equation (~\ref{eq:MCF}) in the
space $\langle x\rangle L^{\infty}$ which is used in this paper. In
Sections \ref{Sec:CollapseVar}-\ref{Section:Reparam} we present some
preliminary derivations and some motivations for our analysis. In
Section \ref{SEC:ApriEst}, we formulate a priori bounds on solutions
to \eqref{eq:MCF}. In Section \ref{SecMain} we use these bounds and
a lower bound proved in Section ~\ref{SEC:LowerBound} to prove our
main result, Theorem \ref{maintheorem}. A priori bounds of Section
\ref{SEC:ApriEst} are proved in Sections
\ref{SEC:EstB}-~\ref{SEC:estM12}.

For any functions $A$ and $B$ we use the notation $A\lesssim B$ to
signify that there is a universal constant $c$ such that $A\leq cB.$
\section*{Acknowledgement} The authors are grateful to Steven Dejak and
Shuangcai Wang for many fruitful discussions.
\section{Local Well-posedness of
(~\ref{eq:MCF})}\label{SEC:localWell} In this section we prove the
local well posedness of (~\ref{eq:MCF}) in the space adapted to our
needs. The result below is standard (cf
~\cite{MR1770903,MR1189906,MR485012}.)
\begin{theorem}\label{THM:WellPose}
If $u_{0}(x)\in \langle x\rangle  L^{\infty}$ and $u_{0}(x)\geq
C_{0}$ for some $C_{0}>0$ and $\partial_{x}^{n}u_{0}\in L^{\infty},\
n=1,2,3,4$ then there exists a time $T\equiv T(C_{0},u_{0})$ such
that for any time $0\leq t\leq T$, (~\ref{eq:MCF}) has a unique
solution $u(\cdot,t)\in \langle x\rangle L^{\infty}$ with
$u(x,0)=u_{0}(x)$, $u(x,t)\geq \frac{C_{0}}{2}$ and
$\partial_{x}^{n}u(\cdot,t) \in L^{\infty}, n=1,2,3,4.$ Moreover, if
$t_*$ is the supremum of such $T(\kappa_{0},u_{0})$ then either
$t^{*}=\infty$ or $\|\frac{1}{u(\cdot,t)}\|_{\infty}\rightarrow
\infty$ as $t\rightarrow t_*.$
\end{theorem}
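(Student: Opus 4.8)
The plan is to treat \eqref{eq:MCF} as a uniformly parabolic quasilinear Cauchy problem and run the classical scheme: local existence and uniqueness in a suitable class, a priori bounds by comparison that hold on a time interval depending only on $C_0$ and the norms of $u_0$, and then the continuation criterion. First I would rewrite the equation as $\partial_t u=a(\partial_x u)\,\partial_x^2 u-(d-1)u^{-1}$ with $a(p):=(1+p^2)^{-1}$. The function $a$ is smooth with $0<a\le 1$ and $a\ge(1+M^2)^{-1}$ whenever $\|\partial_x u\|_\infty\le M$, while $u^{-1}$ and its $u$-derivatives are controlled as long as $\inf u>0$; thus the problem is uniformly parabolic on the region of state space we will stay in. Local existence and uniqueness of a solution with the stated regularity on some (a priori short) interval follows from the standard theory of quasilinear parabolic equations (Ladyzhenskaya--Solonnikov--Uraltseva, Lieberman); alternatively one can set it up as a contraction mapping for the pair $(u,\partial_x u)$, using the equivalent divergence form $\partial_t u=\partial_x^2 u+\partial_x\!\big(\arctan\partial_x u-\partial_x u\big)-(d-1)u^{-1}$ and the Duhamel formula, in which the single derivative on the heat semigroup produces an integrable factor $(t-s)^{-1/2}$.

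The substance of the lemma lies in the a priori estimates, all obtained from the maximum principle / comparison on $\R\times[0,T]$. For the lower bound, the spatially constant function $\underline u(t)=(C_0^2-2(d-1)t)^{1/2}$ is an exact solution of the $x$-independent equation — this is the cylindrical solution \eqref{eq:homegeous} — hence a subsolution with $\underline u(0)=C_0\le u_0$, so comparison gives $u(x,t)\ge (C_0^2-2(d-1)t)^{1/2}\ge C_0/\sqrt2>C_0/2$ for $t\le C_0^2/(4(d-1))$. For the weighted bound, if $\|\langle x\rangle^{-1}u_0\|_\infty\le P$ then $\bar u(x,t):=e^{t}P\langle x\rangle$ is a supersolution (because $\partial_x^2\langle x\rangle=\langle x\rangle^{-3}\le\langle x\rangle$ and $a\le 1$), so $u(x,t)\le e^{t}P\langle x\rangle$. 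For the derivative bounds, one differentiates the equation $n$ times ($n=1,2,3,4$): $w_n:=\partial_x^n u$ solves a parabolic equation whose principal part is $a(\partial_x u)\,\partial_x^2 w_n$, whose remaining terms are polynomial in $\partial_x u,\dots,\partial_x^n u$ and in $u^{-1}$, and in which the only term of order $n+1$ is $\partial_x^5 u=\partial_x w_4$ (occurring when $n=4$), appearing multiplied by the bounded coefficient $\propto a'(\partial_x u)\partial_x^2 u$; this last point is exactly what lets the estimate close at the top order $n=4$ without requiring control of a fifth derivative. Running the maximum principle inductively from $n=1$ — at each step evaluating at a spatial extremum, where the first-order terms vanish — and using that at step $n$ the coefficients are already bounded by the bounds from steps $1,\dots,n-1$, one gets $\|w_n(t)\|_\infty\le 2\|\partial_x^n u_0\|_\infty$ for $t$ up to a time depending only on $C_0$ and $\sum_{k=1}^4\|\partial_x^k u_0\|_\infty$. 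Taking $T$ to be the minimum of this time and $C_0^2/(4(d-1))$ yields all the asserted bounds on $[0,T]$ with $T=T(C_0,u_0)$. (If a smooth-data input is needed to justify these manipulations, one first mollifies $u_0$ — which does not increase $\|\partial_x^n u_0\|_\infty$, $n\le4$, nor destroy $u_0\ge C_0$ or $\langle x\rangle^{-1}u_0\in L^\infty$ — derives the bounds uniformly in the mollification parameter, and passes to the limit by Arzel\`a--Ascoli on compact $x$-sets together with the equation to control $\partial_t$.)

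Uniqueness follows by subtracting two solutions $u,\tilde u$ with the same datum: $w=u-\tilde u$ solves a linear parabolic equation whose coefficients are difference quotients of $a$ and of $u^{-1}$, bounded since both solutions have bounded first derivatives and are bounded below, and $w(0)=0$, so the maximum principle (again growth-restricted, $w$ being bounded) forces $w\equiv 0$ on $[0,T]$. For the continuation criterion, let $t_*$ be the supremum of times up to which a solution with the stated properties exists. If $t_*<\infty$ and $\|u(\cdot,t)^{-1}\|_\infty$ stays bounded as $t\uparrow t_*$, then $\inf_x u(\cdot,t)\ge c_*>0$ on $[0,t_*)$, whence the a priori estimates above — which deteriorate only through $u^{-1}$ — give uniform $C^4$ and weighted bounds up to $t_*$; therefore $u(\cdot,t)$ converges in $\langle x\rangle L^\infty$ as $t\uparrow t_*$ to an admissible datum, and restarting the local existence result at $t_*$ extends the solution beyond $t_*$, contradicting maximality. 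Hence either $t_*=\infty$ or $\|u(\cdot,t)^{-1}\|_\infty\to\infty$ as $t\to t_*$, which is the claim. The step I expect to be the real obstacle is making these comparison arguments rigorous on the unbounded domain with the polynomial weight — choosing the barriers and invoking a Phragm\'en--Lindel\"of-type maximum principle in the presence of linear growth — together with the bookkeeping that shows the fourth-derivative estimate closes (the $\partial_x^5 u$ term being absorbed into a first-order term with bounded coefficient); everything else is routine once the quasilinear structure has been fixed.
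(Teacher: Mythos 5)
Your argument is correct in substance, but it follows a genuinely different route than the paper. The paper's proof is essentially a \emph{reduction} trick: it replaces $\frac{1}{1+(\partial_x u)^2}$ and $\frac{1}{u^2}$ by globally bounded, uniformly elliptic truncations $g_1(\partial_x u)$ and $g_2(u)$ that coincide with the original functions on the range $\{|\partial_x u|\lesssim\|\partial_x u_0\|_\infty,\ u\gtrsim C_0\}$, applies the classical parabolic theory (LSU) to the modified equation --- which now has globally bounded coefficients, so all the caveats about uniform parabolicity on $\R$ and about needing a priori bounds to close the loop simply do not arise --- and then observes that the resulting solution $u_1$, which a posteriori satisfies $u_1\ge C_0/2$ and $\|\partial_x u_1\|_\infty\le 2\|\partial_x u_0\|_\infty$ for small time by continuity, lives in the region where the truncation is inactive and hence solves the original equation. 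Your proposal instead works with the original equation directly and establishes the lower bound, the weighted upper bound, and the derivative bounds by constructing explicit sub/supersolutions and differentiating the equation $n$ times; that works, but it requires exactly the bootstrapping and Phragm\'en--Lindel\"of-type care on the unbounded domain that you flag at the end as the real obstacle. The paper's truncation sidesteps both issues at once: it never needs a comparison argument on $\R$ with polynomial-weight growth, and it never has to verify by hand that the top-order ($n=4$) estimate closes, because LSU already delivers $C^4$ control for the truncated problem. What your approach buys is a self-contained, quantitative lower bound $u\ge(C_0^2-2(d-1)t)^{1/2}$ coming from the exact cylindrical solution, which is sharper than the paper's mere statement $u_1\ge C_0/2$; what the paper's approach buys is brevity and a clean avoidance of the unbounded-domain comparison issues. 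Your continuation-criterion argument and uniqueness argument match the paper's (implicit) ones. One small warning: in your $n=4$ step the equation for $w_4=\partial_x^4 u$ does contain $\partial_x^5 u = \partial_x w_4$ as you note, but you should make explicit that this is a genuine first-order transport term in the parabolic equation for $w_4$ and hence vanishes at a spatial extremum, not a term that needs to be "absorbed" into the coefficient of $\partial_x^2 w_4$; as written, the phrasing blurs these two mechanisms.
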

\begin{proof} First we consider the equation
\begin{equation}\label{eq:u1xt}
\begin{array}{lll}
\partial_{t}u_{1}&=&g_{1}(\partial_{x}u_{1})\partial_{x}^{2}u_{1}-(d-1)g_{2}(u_{1})u_{1}\\
& &\\
u_{1}(x,0)&=& u_{0}(x)
\end{array}
\end{equation}
where $g_{1}$ and $g_{2}$ are strictly positive and smooth functions satisfying the conditions
$$
g_{1}(s):=\left\{
\begin{array}{lll}
\frac{1}{1+s^{2}}\ \text{if}\ s\leq 10\|\partial_{x}u_{0}\|_{\infty},\\
1\ \ \ \ \ \text{if}\ s\geq 20 \|\partial_{x}u_{0}\|_{\infty},
\end{array}
\right.
$$ and $$g_{2}(s):=\left\{
\begin{array}{lll}
\frac{1}{s^{2}}\ \text{if}\ s\geq \frac{1}{10}C_{0},\\
1\ \ \  \text{if}\ s\leq \frac{1}{20} C_{0}.
\end{array}
\right.$$

By standard results (see ~\cite{LSU}) there exists a time $T>0$ such
that (~\ref{eq:u1xt}) has a unique solution $u_{1}(x,t)$ in the time
interval $t\in [0,T]$ such that $u_{1}\geq \frac{1}{2}C_{0}$ and
$\partial_{x}^{n}u_{1}(\cdot,t)\in L^{\infty},$ $n=1,2,3,4,$ and
$\|\partial_{x}u_{1}(\cdot,t)\|_{\infty}\leq
2\|\partial_{x}u_{0}\|_{\infty}.$ Moreover by the definition of
$g_{1}$ and $g_{2}$ we have that for this solution
$$g_{1}(\partial_{x}u_{1})=\frac{1}{1+(\partial_{x}u_{1})^{2}},\
g_{2}(u_{1})=\frac{1}{u_{1}^{2}},$$ in the time interval $t\in
[0,T].$ Thus $u(x,t)=u_{1}(x,t)$ is a solution to (~\ref{eq:MCF})
and satisfies all the conditions of the theorem.
\end{proof}
\section{Collapse Variables and Almost
Solutions}\label{Sec:CollapseVar} In this section we pass from the
original variables $x$ and $t$ to the collapse variables
$y:=\lambda^{-1}(t) (x-x_{0}(t))$ and $\tau:=\int_{0}^{t}
\lambda^{-2}(s) \,ds$.  The point here is that we do not fix
$\lambda(t)$ and $x_0(t)$ but consider them as free parameters to be
found from the evolution of \eqref{eq:MCF}. Suppose $u(x,t)$ is a
solution to \eqref{eq:MCF} with an initial condition $u_{0}(x)$,
which has a minima at $x=0$ and is even with respect to $x=0$. We
define the new unknown function
\begin{equation}\label{eq:definev}
v(y,\tau):=\lambda^{-1}(t)u(x,t)
\end{equation} with $y:=\lambda^{-1}(t)x$ and $\tau:= \int_{0}^{t}\lambda^{-2}(s)ds.$
The function $v$ satisfies the equation
\begin{equation}\label{eqn:BVNLH}
\partial_{\tau}v=(\frac{1}{1+(\partial_{y}v)^{2}}\partial_y^2-a
y\partial_y+a) v-\frac{d-1}{v},
\end{equation}
where $a:=-\lambda\partial_t\lambda$.  The initial condition is
$v(y,0)=\lambda^{-1}_{0} u_{0}( \lambda_{0}y)$, where $\lambda_0$ is
the initial condition for the scaling parameter $\lambda$.

If $\lambda_{0}=1$, then the initial conditions for $u$ given in
Theorem ~\ref{maintheorem} implies that there exists a constant
$\delta$ such that the initial condition $v_{0}(y)$ is even and
satisfy for $(m,n)=(3,0),$ $(\frac{11}{10},0)$, $(1,2)$ and $(2,1)$
the estimates
\begin{equation}
\begin{array}{ccc}
&
&\|v_{0}(y)-(\frac{2(d-1)+\varepsilon_{0}y^{2}}{1-\frac{1}{d-1}\varepsilon_{0}})^{\frac{1}{2}}\|_{m,n}\leq
C \varepsilon_{0}^{\frac{m+n+1}{2}},\\
& & \\
& &v_{0}(y)\geq g(y, \varepsilon_{0}),
\end{array}
\end{equation} $\langle y\rangle^{-1}v_{0}\in  L^{\infty},$ $\partial_{y}v_{0}\in L^{\infty}$,
$|\partial_{y}v_{0}v^{- \frac{1}{2} }_{0}|\leq \kappa_{0}
\varepsilon^{ \frac{1}{2} }_{0},$ $|\partial_{y}^{n}v_{0}|\leq
\kappa_{0}\varepsilon_{0}^{n/2}$, $n=2,3,4,$ for some $C,\
\kappa_{0}\geq 2$, $ \frac{1}{2} \leq \varsigma_{0}\leq 2$ and
$\varepsilon_{0}\leq \delta$.

If the parameter $a$ is a constant, then \eqref{eqn:BVNLH} has the
following cylindrical, static (i.e. $y$ and $\tau$-independent)
solution
\begin{equation}
v_a:=( \frac{d-1}{ a})^{\frac{1}{2}}.
\end{equation}
In the original variables $t$ and $x$, this family of solutions
corresponds to the homogeneous solution \eqref{eq:homegeous} of
(~\ref{eq:MCF}) with the parabolic scaling $\lambda^{2}=2 a(T-t)$,
where the collapse time, $T:= \frac{u_{0}^{2}}{2(d-1)}$, is
determined by $u_0$, the initial value of the homogeneous solution
$u_{hom}(t)$.

If the parameter $a$ is $\tau-$dependent but $|a_\tau|$ is small,
then the above solutions are good approximations to the exact
solutions. A larger family of approximate solution is obtained by
solving the equation $ a y v_y-a v+\frac{d-1}{v}=0$, which is
derived from \eqref{eqn:BVNLH} by neglecting the $\tau$ derivative
and second order derivative in $y$ (adiabatic, slowly varying
approximation). This equation has the general solution
\begin{equation}
v_{b c}:=(\frac{2(d-1) + b y^2}{c})^{\frac{1}{2}}
\end{equation}
for all $b\in\mathbb{R}$ and for $c=2a$. In what follows we take
$b\ge 0$ so that $v_{b c}$ is smooth.  Note that $v_{0,2a}=v_a$.
Since $v_{bc},\ c=2a$, is not an exact solution to
(~\ref{eqn:BVNLH}) we should leave the parameter $c$ free, to be
determined by the best overall approximation. Jumping ahead, it
turns out that a convenient choice of $c$ is $c=a+\frac{1}{2}.$ Thus
we introduce $V_{ab}(y):=v_{b,a+\frac{1}{2}}(y)=(\frac{2(d-1)+b
y^{2}}{a+\frac{1}{2}})^{\frac{1}{2}}.$ Note that this function is
not a solution to the equation $ay\partial_{y}v-av+\frac{d-1}{v}=0.$

\section{``Gauge" Transform}
\label{sec:Gauge}
In order to convert the non-self-adjoint linear part of Equation
\eqref{eqn:BVNLH} into a more tractable self-adjoint one we
perform a gauge transform. Let
\begin{equation}\label{eq:definew}
w(y,\tau):=\exp{-\frac{a}{4}y^{2}}v(y,\tau).
\end{equation}
Then $w$ satisfies the equation
\begin{equation} \label{eqn:w}
\partial_{\tau}w=(
\frac{1}{1+(\partial_{y}v)^{2}}\partial_y^2-\frac{1}{4}\omega^2
y^2+\frac{3}{2}a ) w-\exp{-\frac{a}{2}y^2}\frac{d-1}{w},
\end{equation}
where $\omega^2=a^2+a_\tau$.  The approximate solution $v_{bc}$ to
\eqref{eqn:BVNLH} transforms to $v_{a b c}$ where $v_{a b c}:=v_{b
c}\exp{-\frac{a}{4}y^{2}}$, explicitly
\begin{equation}
v_{a b c}:=(\frac{2(d-1)+b y^2}{
c})^{\frac{1}{2}}\exp{-\frac{a}{4}y^{2}}.
\end{equation} As was permitted above we will choose $c=a+\frac{1}{2}.$

Note that the linear part of Equation (~\ref{eqn:w}) is self-adjoint
in the space $L^{2}(\mathbb{R},dy).$ Hence it is natural to consider
the linear part of Equation (~\ref{eqn:BVNLH}) in the space
$L^{2}(\mathbb{R}, e^{-\frac{a y^{2}}{2}}dy).$
\section{Reparametrization of Solutions}
\label{Section:Reparam}
In this section we split solutions to Equation \eqref{eqn:BVNLH}
into the leading term - the almost solution $V_{a b}$ - and a
fluctuation $\eta$ around it.  More precisely, we would like to
parametrize a solution by a point on the manifold $M_{as}:=\{V_{a b
}\, |\, a,b\in \Rp, b \leq \epsilon\}$ of almost solutions and the
fluctuation orthogonal to this manifold (large slow moving and small
fast moving parts of the solution). We equip $M_{as}$ with the
Riemannian metric
\begin{equation}\label{eq:Rie}
\langle \eta,\eta^{'}\rangle:=\int \eta\eta^{'}e^{-\frac{a
y^{2}}{2}}d y.
\end{equation}
For technical reasons, it is more convenient to require the
fluctuation to be almost orthogonal to the manifold $M_{as}$. More
precisely, we require $\eta$ to be orthogonal to the vectors $1$ and
$(1-a y^2)$ which are almost tangent vectors to the above manifold,
provided $b$ is sufficiently small. Note that $\eta$ is already
orthogonal to $\sqrt{a} y $ since our initial conditions, and
therefore, the solutions are even in $x$.


The next result will give a convenient reparametrization of the
initial condition $v_{0}(y):=\lambda_{0}^{-1}u_{0}(\lambda_{0}y).$
Recall the definition $V_{a
b}:=v_{bc}|_{c=a+\frac{1}{2}}=(\frac{2(d-1)+by^{2}}{a+\frac{1}{2}})^{\frac{1}{2}}$.
We define a neighborhood:
\begin{equation*}
U_{\epsilon_0}:=\{v\in \langle y\rangle^{3}L^\infty(\R)\ |\ \|
v-V_{ab}\|_{3,0}\ll b\ \mbox{for some}\ a\in[1/4,1],\
b\in(0,\epsilon_0]\ \}.
\end{equation*}

\begin{proposition}\label{Prop:Splitting}
There exist an $\epsilon_{0}>0$ and a unique $C^1$ functional
$g:U_{\epsilon_0}\rightarrow \mathbb{R}^{+}\times \mathbb{R}^{+}$,
such that any function $v\in U_{\epsilon_0}$ can be uniquely written
in the form
\begin{equation} \label{eqn:split}
v =V_{g(v)} + \eta,
\end{equation}
with $\eta\perp 1,\ 1-a y^{2}$ in $L^2(\R, e^{-\frac{a
y^{2}}{2}}dy)$, $(a,b)=g(v)$. Moreover, if $(a_0, b_0)\in
[\frac{1}{4},1]\times (0,\epsilon_0]$ and
$\|v-V_{a_{0},b_{0}}\|_{3,0}\ll b_{0}$, then
\begin{equation}
\label{eqn:28a} |g(v)-(a_{0},b_{0})|\lesssim
\|v-V_{a_{0}b_{0}}\|_{3,0}.
\end{equation}
\end{proposition}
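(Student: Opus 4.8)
The plan is to apply the implicit function theorem to a suitable map $G:U_{\epsilon_0}\times(\Rp\times\Rp)\to\R^2$ whose zero set encodes the orthogonality conditions. Concretely, given $v\in U_{\epsilon_0}$ and parameters $(a,b)$, write $\eta(v,a,b):=v-V_{ab}$ and define
\begin{equation*}
G(v,a,b):=\Big(\ip{\eta(v,a,b)}{1}_{a},\ \ip{\eta(v,a,b)}{1-ay^2}_{a}\Big),
\end{equation*}
where $\ip{\cdot}{\cdot}_a$ denotes the weighted inner product $\int(\cdot)(\cdot)\,e^{-ay^2/2}\,dy$. A pair $(a,b)=g(v)$ satisfies \eqref{eqn:split} with $\eta\perp 1, 1-ay^2$ precisely when $G(v,a,b)=0$. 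First I would check that $G$ is $C^1$ jointly in all variables on the relevant open set: $V_{ab}(y)=(\frac{2(d-1)+by^2}{a+1/2})^{1/2}$ depends smoothly on $(a,b)$ with $y$-growth like $\langle y\rangle$, so pairing against $1$ and $1-ay^2$ against the Gaussian weight $e^{-ay^2/2}$ (with $a$ bounded below, say $a\ge 1/8$) produces convergent integrals with derivatives under the integral sign justified by dominated convergence; the $\eta$-dependence is linear hence clearly $C^1$ from $\langle y\rangle^3 L^\infty$ into $\R^2$.

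The main computation is the invertibility of the $2\times2$ Jacobian $\partial_{(a,b)}G$ at the reference point. At $v=V_{a_0b_0}$ we have $\eta=0$, so the only contributions come from differentiating $-V_{ab}$ and the weight. I would compute, at $(a,b)=(a_0,b_0)$ with $b_0$ small: $\partial_b V_{ab}|_{b=0}=\frac{y^2}{2\sqrt{2(d-1)(a+1/2)}}$ and $\partial_a V_{ab}|_{b=0}=-\frac{\sqrt{2(d-1)}}{2(a+1/2)^{3/2}}$, i.e. a multiple of the constant function. Pairing these against $1$ and $1-ay^2$ using the moments $\int e^{-ay^2/2}dy=\sqrt{2\pi/a}$, $\int y^2 e^{-ay^2/2}dy=\frac1a\sqrt{2\pi/a}$, $\int y^4 e^{-ay^2/2}dy=\frac3{a^2}\sqrt{2\pi/a}$ (so that $\ip{1}{1-ay^2}_a=0$ and $\ip{y^2}{1-ay^2}_a=-\frac{2}{a^2}\sqrt{2\pi/a}\ne0$), the Jacobian at $b_0=0$ becomes triangular with nonzero diagonal entries: the $a$-derivative feeds the $\ip{\cdot}{1}_a$ row and the $b$-derivative feeds the $\ip{\cdot}{1-ay^2}_a$ row, both with nonvanishing coefficients. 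One must also account for the $a$-dependence of the weight $e^{-ay^2/2}$ inside the inner products, but since $\eta=0$ at the reference point those terms vanish. Hence $\det\partial_{(a,b)}G\ne0$ at $(a_0,0)$, and by continuity it stays bounded away from zero for $b_0\le\epsilon_0$ small and on a $\|\cdot\|_{3,0}$-neighborhood of $V_{a_0b_0}$; this is the step I expect to be the real work, since the cross terms coming from the $b$-dependence of $V_{ab}$ at $b=b_0>0$ and from $\eta\ne0$ must be shown to be $\O{b_0}+\O{\|v-V_{a_0b_0}\|_{3,0}}$ perturbations, small relative to the diagonal.

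Given the nonsingular Jacobian, the implicit function theorem yields, for each $v$ near $V_{a_0b_0}$, a unique $(a,b)=g(v)$ solving $G(v,a,b)=0$, with $g$ of class $C^1$; shrinking $\epsilon_0$ and patching over the compact parameter range $[1/4,1]\times(0,\epsilon_0]$ (using that the quantitative neighborhood size in the IFT is uniform because the relevant bounds are uniform in $(a_0,b_0)$ there) gives a single $C^1$ functional on all of $U_{\epsilon_0}$; uniqueness of the decomposition \eqref{eqn:split} follows from uniqueness in the IFT together with the fact that $\eta=v-V_{ab}$ is determined once $(a,b)$ is. Finally, the Lipschitz-type estimate \eqref{eqn:28a} is the standard IFT bound: $|g(v)-(a_0,b_0)|=|g(v)-g(V_{a_0b_0})|\le \|(\partial_{(a,b)}G)^{-1}\|\cdot\|G(v,a_0,b_0)-G(V_{a_0b_0},a_0,b_0)\|$, and since $G$ is linear in $\eta$ we have $\|G(v,a_0,b_0)\|\lesssim\|v-V_{a_0b_0}\|_{3,0}$ (the Gaussian weight absorbs the $\langle y\rangle^{2}$ growth of the test functions against the $\langle y\rangle^{3}L^\infty$ norm), giving the claim with a constant uniform over the reference set.
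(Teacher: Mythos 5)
Your proposal is correct and follows essentially the same route as the paper: an implicit-function-theorem argument on the orthogonality functional $G$, invertibility of $\partial_{(a,b)}G$ via its leading-order triangular structure (using $\langle 1,1-ay^2\rangle_a=0$ and the nonvanishing diagonal moments), the observation that both the $\eta$-terms from differentiating the weight/test functions and the $O(b_0)$ corrections are controllably small perturbations, and the Lipschitz estimate \eqref{eqn:28a} via linearity of $G$ in $\eta$. The one place the paper is more careful than your sketch is in choosing the IFT ball radius $\epsilon \ll b_0$ (rather than uniformly in $b_0$) precisely to guarantee the output $b$ stays strictly positive; this is implicitly covered by the $\ll b$ condition built into $U_{\epsilon_0}$ and by \eqref{eqn:28a}, but your phrase ``the quantitative neighborhood size in the IFT is uniform'' glosses over it.
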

\begin{proof}
Let $X:= \langle y\rangle^3 L^\infty$ with the corresponding norm.
The orthogonality conditions on the fluctuation can be written as
$G(\mu,v)=0$, where $\mu=(a,b)$ and $G:\mathbb{R}^{+}\times
\mathbb{R}^{+}\times X \rightarrow \R^2$ is defined as (we use the
Riemannian metric (~\ref{eq:Rie}))
\begin{equation*}
G(\mu, v):=\lb
\begin{array}{ccc} \langle V_{\mu}-v,1\rangle\\
\langle V_{\mu}-v,1-a y^{2}\rangle
\end{array} \rb.
\end{equation*}
Using the implicit function theorem we will prove that for any
$\mu_0:=(a_0, b_0)\in [\frac{1}{4},1]\times (0,\epsilon_{0}]$ there
exists a unique $C^1$ function $g:U_{\mu_{0}}\rightarrow
\mathbb{R}^{+}\times \mathbb{R}^{+}$ defined in a neighborhood
$U_{\mu_0}\subset X$ of $V_{\mu_0}$ such that $G(g(v),v)=0$ for all
$v\in U_{\mu_0}$.

Note first that the mapping $G$ is $C^1$ and $G(\mu_0, V_{\mu_0})=0$
for all $\mu_0$.  We claim that the linear map $\p_\mu G(\mu_0,
V_{\mu_0})$ is invertible.  Indeed, let $B_{\epsilon}(V_{\mu_0})$
and $B_\delta(\mu_0)$ be the balls in $X$ and $\mathbb{R}^2$ around
$V_{\mu_0}$ and $\mu_0$ and of the radii $\epsilon$ and $\delta$,
respectively. We compute
\begin{equation}\label{linearization}
\p_\mu G(\mu, v)=A_{1}(\mu)+A_{2}(\mu,v)
\end{equation}
where
$$
A_{1}(\mu):=\lb
\begin{array}{cc}
\ip{\p_a V_{
\mu}}{ 1} & \ip{\p_b V_{\mu}}{ 1}  \\
\ip{\p_a V_{\mu}}{1-a y^2}& \ip{\p_b V_{\mu}}{1-a y^2}
\end{array}
\rb
$$ and
\begin{equation*}
A_{2}(\mu,v):= -\frac{1}{4}\lb\begin{array}{cc}
\langle V_{\mu}-v, y^{2}\rangle& 0 \\
\ip{V_{\mu}-v}{\lb 1-a y^2\rb y^2 } & 0
\end{array}\rb.
\end{equation*}
For $b>0$ and small, we expand the matrix $A_{1}$ in $b$ to get
$A_{1}=G_{1}+O(b)$, where the matrices $G_{1}$ is defined as
$$G_{1}:=\left(
\begin{array}{ccc}
-\frac{1}{2}(a+\frac{1}{2})^{-3/2}\langle 1,1\rangle &
\frac{1}{2(a+\frac{1}{2})}\langle y^{2},1\rangle\\
-\frac{1}{2}(a+\frac{1}{2})^{-3/2}\langle 1, 1-ay^{2}\rangle &
\frac{1}{2(a+\frac{1}{2})}\langle y^{2},1-ay^{2}\rangle
\end{array}
\right).$$ Obviously the matrices $G_{1}$ has uniformly (in $a\in
[\frac{1}{4},1]$) bounded inverses. Furthermore, by the Schwarz
inequality
$$\|A_{2}(\mu,v)\|\lesssim \| v-V_{ab}\|_X.$$
Therefore there exist $\epsilon_0$ and $\epsilon_1$ s.t. the matrix
$\p_\mu G(\mu,v)$ has a uniformly bounded inverse for any $v\in
B_{\epsilon_1}(V_{\mu})$ and
$\mu\in[\frac{1}{4},1]\times(0,\epsilon_0]$. Hence by the implicit
function theorem, the equation $G(\mu,v)=0$ has a unique solution
$\mu=g(v)$ on a neighborhood of every $V_\mu$,
$\mu\in[\frac{1}{4},1]\times(0,\epsilon_0]$, which is $C^1$ in $v$.
Our next goal is to determine these neighborhoods.

To determine a domain of the function $\mu=g(v)$, we examine closely
a proof of the implicit function theorem. Proceeding in a standard
way, we expand the function $G(\mu,v)$ in $\mu$ around $\mu_0$:
\begin{equation*}
G(\mu,v)=G(\mu_0,v)+\p_\mu G(\mu_0,v)(\mu-\mu_0)+R(\mu,v),
\end{equation*}
where $R(\mu,v)=\O{|\mu-\mu_0|^2}$ uniformly in $v\in X$. Here
$|\mu|^2=|a|^2+|b|^2$ for $\mu=(a,b)$.  Inserting this into the
equation $G(\mu,v)=0$ and inverting the matrix $\p_\mu G(\mu_0,v)$,
we arrive at the fixed point problem $\alpha=\Phi_v(\alpha)$, where
$\alpha:=\mu-\mu_0$ and $\Phi_v(\alpha):=- \p_\mu G(\mu_0,v)^{-1}
[G(\mu_0,v)+R(\mu,v)]$.  By the above estimates there exists an
$\epsilon_1$ such that the matrix $\p_\mu G(\mu_0,v)^{-1}$ is
bounded uniformly in $v\in B_{\epsilon_1}(V_{\mu_0})$.  Hence we
obtain from the remainder estimate above that
\begin{equation}
|\Phi_v(\alpha)|\lesssim|G(\mu_0,v)|+|\alpha|^2.
\label{eqn:SplittingSharp}
\end{equation}
Furthermore, using that $\p_\alpha \Phi_v(\alpha)= - \p_\mu
G(\mu_0,v)^{-1} [G(\mu,v)- G(\mu_0,v)+R(\mu,v)]$ we obtain that
there exist $\epsilon \leq \epsilon_1$ and $\delta$ such that
$\|\p_\alpha\Phi_v(\alpha)\|\le\frac{1}{2}$ for all $v\in
B_{\epsilon}(V_{\mu_0})$ and $\alpha\in B_\delta(0)$. Let $\mu_0 =
(a_0, b_0).$ Pick $\epsilon$ and $\delta$ so that $\epsilon
\ll\delta\ll \min(b_0, \epsilon_1) \ll 1$.  Then, for all $v\in
B_{\epsilon}(V_{\mu_0})$, $\Phi_v$ is a contraction on the ball
$B_\delta(0)$ and consequently has a unique fixed point in this
ball.  This gives a $C^1$ function $\mu=g(v)$ on
$B_{\epsilon}(V_{\mu_0})$ satisfying $|\mu-\mu_0|\le\delta$. An
important point here is that since $\epsilon\ll b_0$ we have that
$b>0$ for all $V_{ab}\in B_{\epsilon}(V_{\mu_0})$ (we use here that
$|b' -b| \le \frac{1}{c}\|\langle
y\rangle^{-3}(V_{ab'}-V_{ab})\|_\infty$). Now, clearly, the balls
$B_{\epsilon}(V_{\mu_0})$ with
$\mu_0\in[\frac{1}{4},1]\times[0,\epsilon_0]$ cover the
neighbourhood $U_{\epsilon_0}$.  Hence, the map $g$ is defined on
$U_{\epsilon_0}$ and is unique, which implies the first part of the
proposition.

Now we prove the second part of the proposition.  The definition of
the function $G(\mu,v)$ implies $G(\mu_0,v)=G(\mu_0,v-V_{\mu_0})$
and
\begin{equation}
|G(\mu_0,v)|\lesssim \|\langle y\rangle^{-3}(v-V_{\mu_0})\|_\infty.
\label{eqn:28aA}
\end{equation}
This inequality together with the estimate
\eqref{eqn:SplittingSharp} and the fixed point equation
$\alpha=\Phi_v(\alpha)$, where $\alpha=\mu-\mu_0$ and $\mu=g(v)$,
implies $|\alpha|\lesssim \| \langle
y\rangle^{-3}(v-V_{\mu_0})\|_\infty+|\alpha|^2$ which, in turn,
yields \eqref{eqn:28a}.
\end{proof}
\begin{proposition}
\label{Prop:SplittingIC} In the notation of Proposition
\ref{Prop:Splitting}, if $\|v-V_{\mu_{0}}\|_{m,n}\lesssim
b_{0}^{\frac{m+n+1}{2}}$ where $b_{0}>0$ is small and $(m,n)=(3,0),\
(\frac{11}{10},0)$, $ (2,1)$, $(1,2)$, then
\begin{equation}\label{eq:vv0}
|g(v)-\mu_{0}|\lesssim \|v-V_{\mu_{0}}\|_{3,0};
\end{equation}
\begin{equation}
\|v-V_{g(v)})\|_{3,0}\lesssim \|v-V_{\mu_{0}}\|_{3,0};
\label{Ineq:IC}
\end{equation}
\begin{equation}\label{eq:vwithoutweight}
\|v-V_{g(v)}\|_{m',n'}\lesssim b_{0}^{\frac{m'+n'+1}{2}}
\end{equation} with $(m',n')=(\frac{11}{10},0), \ (1,2)$, $(2,1).$
\end{proposition}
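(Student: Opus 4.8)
The plan is to deduce Proposition \ref{Prop:SplittingIC} from Proposition \ref{Prop:Splitting} together with simple triangle-inequality bookkeeping. First I would observe that the hypothesis $\|v-V_{\mu_0}\|_{m,n}\lesssim b_0^{(m+n+1)/2}$ for $(m,n)=(3,0)$ gives $\|v-V_{\mu_0}\|_{3,0}\lesssim b_0^2\ll b_0$, so $v$ lies in the neighbourhood $U_{\epsilon_0}$ (and in fact in the ball $B_\epsilon(V_{\mu_0})$) to which Proposition \ref{Prop:Splitting} applies. Hence $g(v)$ is defined, and \eqref{eqn:28a} gives immediately $|g(v)-\mu_0|\lesssim\|v-V_{\mu_0}\|_{3,0}\lesssim b_0^2$, which is \eqref{eq:vv0}. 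This in particular shows the parameters $(a,b)=g(v)$ satisfy $|a-a_0|,\,|b-b_0|\lesssim b_0^2$, so $b=b_0(1+O(b_0))$ and $a$ stays in a compact interval bounded away from $0$; these facts will be used repeatedly below.

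Next, for \eqref{Ineq:IC} I would write $v-V_{g(v)} = (v-V_{\mu_0}) + (V_{\mu_0}-V_{g(v)})$ and estimate the second term by the mean value theorem: $\|V_{\mu_0}-V_{g(v)}\|_{3,0}\lesssim |g(v)-\mu_0|\sup\|\partial_\mu V_\mu\|_{3,0}$, where the supremum is over the segment joining $\mu_0$ and $g(v)$. Since on that segment $b$ is comparable to $b_0$ and bounded, and since $\partial_a V_{ab}$ and $\partial_b V_{ab}$ are uniformly bounded in $\langle y\rangle^3 L^\infty$ for $a\in[1/4,1]$, $b\in(0,\epsilon_0]$ (one checks $\partial_b V_{ab} = \frac{y^2}{2(a+1/2)} V_{ab}^{-1}$ is $O(\langle y\rangle)$ and $\partial_a V_{ab}=O(\langle y\rangle)$, so both have bounded $\|\cdot\|_{3,0}$), the mean value bound gives $\|V_{\mu_0}-V_{g(v)}\|_{3,0}\lesssim\|v-V_{\mu_0}\|_{3,0}$, and combining with the first term yields \eqref{Ineq:IC}.

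For \eqref{eq:vwithoutweight} with $(m',n')=(\frac{11}{10},0),(1,2),(2,1)$ I would again split $v-V_{g(v)}=(v-V_{\mu_0})+(V_{\mu_0}-V_{g(v)})$. The first term is bounded in $\|\cdot\|_{m',n'}$ by $b_0^{(m'+n'+1)/2}$ directly from the hypothesis. For the second term I would use $\|V_{\mu_0}-V_{g(v)}\|_{m',n'}\lesssim|g(v)-\mu_0|\,\sup\|\partial_\mu V_\mu\|_{m',n'}\lesssim b_0^2 \cdot C$, using \eqref{eq:vv0} and the uniform bounds on the $\mu$-derivatives of $V_{\mu}$ in each of the $L^\infty_{m',n'}$ norms (here one needs to check that $\partial_a\partial_y V_{ab}$, $\partial_a\partial_y^2 V_{ab}$, $\partial_b\partial_y V_{ab}$, $\partial_b\partial_y^2 V_{ab}$ have the right polynomial growth, which holds since each derivative of $V_{ab}$ grows no faster than a fixed power of $\langle y\rangle$ when $b$ is bounded). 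Since $(m'+n'+1)/2\le 2$ for all the listed pairs, $b_0^2\lesssim b_0^{(m'+n'+1)/2}$ and the second term is absorbed, giving \eqref{eq:vwithoutweight}.

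The main obstacle, and the only place requiring genuine care rather than bookkeeping, is the uniform control of the $\mu$-derivatives of $V_{ab}$ in the weighted norms $\|\cdot\|_{m,n}$ along the segment joining $\mu_0$ to $g(v)$: one must verify that differentiating $V_{ab}=(\frac{2(d-1)+by^2}{a+1/2})^{1/2}$ in $a$ or $b$ does not worsen the $\langle y\rangle$-growth beyond what the weights $\langle y\rangle^{-m}$ with $m\le 3$ can absorb, uniformly for $b$ in a bounded interval and $a\in[1/4,1]$. This is where one uses that $b$ stays comparable to the small $b_0$ (from \eqref{eq:vv0}), so that, e.g., $\partial_b V_{ab}\sim y^2 V_{ab}^{-1}$ does not blow up as $b\to 0$ — it is in fact $O(\langle y\rangle)$ since $V_{ab}\gtrsim 1$. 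Once these elementary but slightly fussy growth estimates are in hand, everything else is the triangle inequality and Proposition \ref{Prop:Splitting}.
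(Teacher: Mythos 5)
Your overall strategy is the same as the paper's: derive \eqref{eq:vv0} from \eqref{eqn:28a}, then use the triangle inequality in the form $v-V_{g(v)}=(v-V_{\mu_0})+(V_{\mu_0}-V_{g(v)})$ and control the second term through the Lipschitz continuity of $\mu\mapsto V_\mu$. Parts \eqref{eq:vv0} and \eqref{Ineq:IC} are handled exactly as the paper does.

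However, for \eqref{eq:vwithoutweight} you assert that the $\mu$-derivatives of $V_\mu$ are \emph{uniformly} bounded in the norms $\|\cdot\|_{m',n'}$ — "which holds since each derivative of $V_{ab}$ grows no faster than a fixed power of $\langle y\rangle$ when $b$ is bounded." This is not correct for $(m',n')=(\tfrac{11}{10},0)$. One has
\[
\partial_b V_{ab}=\frac{y^2}{2(a+\tfrac12)\,V_{ab}},
\qquad
\langle y\rangle^{-\frac{11}{10}}\,|\partial_b V_{ab}|\sim
\frac{\langle y\rangle^{9/10}}{\sqrt{1+b y^2}},
\]
and the supremum over $y$ is attained near $|y|\sim b^{-1/2}$ with value $\sim b^{-9/20}$, so $\|\partial_b V_{ab}\|_{\frac{11}{10},0}\sim b^{-9/20}$ is \emph{not} bounded as $b\to0$. (The weight $\langle y\rangle^{-3}$ is strong enough to absorb this — that is why your estimate for \eqref{Ineq:IC} is fine — but $\langle y\rangle^{-11/10}$ is not.) Consequently the intermediate claim $\|V_{g(v)}-V_{\mu_0}\|_{\frac{11}{10},0}\lesssim|g(v)-\mu_0|\cdot C\lesssim b_0^2$ is false as stated; the mean value theorem only gives
\[
\|V_{g(v)}-V_{\mu_0}\|_{\frac{11}{10},0}\lesssim |g(v)-\mu_0|\,b_0^{-9/20}\lesssim b_0^{2-\frac{9}{20}}=b_0^{\frac{31}{20}},
\]
which is exactly what the paper records. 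The final conclusion survives because $b_0^{31/20}\lesssim b_0^{21/20}=b_0^{(\frac{11}{10}+0+1)/2}$, but the margin that makes this work is precisely the gap between the exponent $2$ in $|g(v)-\mu_0|\lesssim b_0^2$ and the loss $b_0^{-9/20}$ in the derivative bound; your argument does not surface this trade-off. You should redo the "elementary but fussy growth estimates" you allude to, keeping the $b$-dependence explicit, rather than assuming uniformity.
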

\begin{proof}
Equation (~\ref{eqn:28a}) implies (~\ref{eq:vv0}) with
$\mu_{0}=(a_{0},b_{0})$. Moreover we observe
$$
\begin{array}{lll}
\|v-V_{g(v)}\|_{3,0}&\leq &\|v-V_{\mu_{0}}\|_{3,0}+\|V_{g(v)}-V_{\mu_{0}}\|_{3,0}\\
&\lesssim& \|v-V_{\mu_{0}}\|_{3,0}+|\mu_{0}-g(v)|\\
&\lesssim & \|v-V_{\mu_{0}}\|_{3,0}
\end{array}
$$ which is (~\ref{Ineq:IC}).

For Equation (~\ref{eq:vwithoutweight}) we only prove the case
$(m',n')=(\frac{11}{10},0),$ the other cases are proved similarly.
We write
$$\|v-V_{g(v)}\|_{\frac{11}{10},0}\leq \|v-V_{\mu_{0}}\|_{\frac{11}{10},0}+\|V_{g(v)}-V_{\mu_{0}}\|_{\frac{11}{10},0}.$$
By the definition of $V_{a,b}$ we have
$$\|V_{g(v)}-V_{\mu_{0}}\|_{\frac{11}{10},0}\lesssim
|g(v)-\mu_{0}|b_{0}^{-\frac{9}{20}}.$$ This together with
(~\ref{eq:vv0}) implies
$$\|V_{g(v)}-V_{\mu_{0}}\|_{\frac{11}{10},0}\lesssim b_{0}^{\frac{21}{10}}.$$ Using
$\|v-V_{\mu_{0}}\|_{\frac{11}{10},0}\lesssim b_{0}^{\frac{21}{10}}$
we complete the proof of (~\ref{eq:vwithoutweight}) for
$(m',n')=(\frac{11}{10},0)$.
\end{proof}

Now we establish a reparametrization of solution $u(x,t)$ on small
time intervals. In Section \ref{SecMain} we convert this result into
a global reparametrization. In the rest of the section it is
convenient to work with the original time $t$,  instead of rescaled
time $\tau$.
We denote $I_{t_0, \delta}:= [t_0, t_0 +\delta]$ and define for any
time $t_{0}$ and constant $\delta>0$ two sets:
$$\mathcal{A}_{t_0, \delta}:=
C^{1}(I_{t_0, \delta},[\frac{1}{4},1])\ \mbox{and}\
\mathcal{B}_{t_0, \delta, \epsilon_0}:=C^{1}(I_{t_0,
\delta},(0,\epsilon_{0}])$$ where, recall, the constant
$\epsilon_{0}$ is the same as in Proposition ~\ref{Prop:Splitting}.

Denote $u_{\lambda}(y, t) := \lambda^{-1}(t)u(\lambda(t) y,t)$.
Suppose $u(\cdot,t)$ is a function such that for some
$\lambda_{0}>0$
\begin{equation}\label{eq:init2}
\sup_{t \in I_{t_0, \delta}}b^{-1}(t)\|
u_{\lambda}(\cdot,t)-V_{a(t),b(t)}\|_{3,0}\ll 1
\end{equation}
for some  $a\in\mathcal{A}_{t_0, \delta}$, $b\in\mathcal{B}_{t_0,
\delta, \epsilon_0}$, and $\lambda(t)$ satisfying
$\lambda(t_{0})=\lambda_{0}\ \mbox{and}\
-\lambda(t){\partial_t}\lambda(t)=a(t)$. We define the set
$$\mathcal{U}_{t_0,\delta, \epsilon_0, \lambda_0}:=\{u \in C^1(I_{t_0, \delta},
\langle y\rangle^3L^\infty)\ |\ (~\ref{eq:init2})\ \text{holds for
some}\ a(t), b(t) \}.$$
\begin{proposition}\label{Prop:LMSplitting2}
Suppose $u\in \mathcal{U}_{t_0,\delta, \epsilon_0, \lambda_0}$ and
$\lambda_{0}^{2}\delta \ll 1$. Then there exists a unique $C^1$ map
$g_\#:\mathcal{U}_{t_0,\delta, \epsilon_0, \lambda_0}\rightarrow
\mathcal{A}_{t_0, \delta}\times \mathcal{B}_{t_0, \delta,
\epsilon_0}$, such that for $t\in I_{t_0, \delta},$ $u(\cdot,t)$ can
be uniquely represented in the form
\begin{equation} \label{eqn:splitting2}
u_{\lambda}(y, t) =V_{g_\#(u)(t)}(y) + \phi(y,\tau(t)),
\end{equation}
with $\tau(t):=\int_{0}^{t}\lambda^{-2}(t)dt$,
$(a(t),b(t))=g_\#(u)(t)$ and
\begin{equation} \label{eqn:splitting2conditions}
\phi(\cdot,\tau(t))\perp 1,\ a(t) y^{2}-1\ \mbox{in}\
L^2(\R,e^{-\frac{a(t)}{2} y^2}dy),\ \lambda(t_{0})=\lambda_{0}\
\mbox{and}\
-\lambda(t){\partial_t}\lambda(t)=a(t).
\end{equation}
\end{proposition}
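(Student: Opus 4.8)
The plan is to reduce Proposition~\ref{Prop:LMSplitting2} to a ``pointwise-in-time'' application of Proposition~\ref{Prop:Splitting}, and then upgrade the resulting family of maps $\{g(u_\lambda(\cdot,t))\}_{t\in I_{t_0,\delta}}$ to a single $C^1$ map into the function spaces $\mathcal{A}_{t_0,\delta}\times\mathcal{B}_{t_0,\delta,\epsilon_0}$. First I would address the fact that, unlike in Proposition~\ref{Prop:Splitting}, the scaling $\lambda(t)$ appearing in $u_\lambda(y,t)=\lambda^{-1}(t)u(\lambda(t)y,t)$ is \emph{itself} one of the unknowns, tied to $a(t)$ through the ODE $-\lambda\partial_t\lambda=a$, $\lambda(t_0)=\lambda_0$. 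The key observation is that this ODE decouples: given any $a\in\mathcal{A}_{t_0,\delta}$, the relation $\lambda^2(t)=\lambda_0^2-2\int_{t_0}^t a(s)\,ds$ determines $\lambda$ uniquely, and since $a\in[\tfrac14,1]$ and $\lambda_0^2\delta\ll1$ one has $\lambda(t)^2=\lambda_0^2(1+O(\lambda_0^{-2}\delta))$, so $\lambda(t)>0$ stays bounded away from $0$ and $\infty$ on $I_{t_0,\delta}$ and depends $C^1$ (indeed smoothly) on $a$ as a map $\mathcal{A}_{t_0,\delta}\to C^1(I_{t_0,\delta},\Rp)$. So the genuine unknown is the pair $(a,b)$, and $\lambda$ is a slaved quantity; this is why the hypothesis $\lambda_0^2\delta\ll1$ is imposed.

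With $\lambda$ slaved to $a$, the next step is to set up the problem as a fixed point for $(a,b)$ in the Banach space $C^1(I_{t_0,\delta},\R^2)$. For a candidate $(a,b)$, form $\lambda=\lambda[a]$ as above, then $u_\lambda(\cdot,t)$, and then apply the pointwise splitting. Here I would invoke Proposition~\ref{Prop:Splitting}: the hypothesis \eqref{eq:init2} says exactly that $u_\lambda(\cdot,t)\in U_{\epsilon_0}$ (with $\|u_\lambda(\cdot,t)-V_{a(t),b(t)}\|_{3,0}\ll b(t)$), so for each fixed $t$ there is a unique $(\tilde a(t),\tilde b(t))=g(u_\lambda(\cdot,t))\in[\tfrac14,1]\times(0,\epsilon_0]$ with $u_\lambda(\cdot,t)-V_{\tilde a(t),\tilde b(t)}\perp 1,\ \tilde a(t)y^2-1$ in $L^2(e^{-\tilde a(t)y^2/2}dy)$. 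The consistency requirement is that the output equal the input, $(\tilde a,\tilde b)=(a,b)$; however, since $g$ is canonically the splitting map, I would instead argue more directly: \emph{define} $(a(t),b(t)):=g(u_\lambda(\cdot,t))$ but note this is circular because $\lambda$ depends on $a$. The clean way is the fixed-point formulation $\mathcal{F}(a,b):=\big(t\mapsto g(u_{\lambda[a]}(\cdot,t))\big)$ on a small ball in $C^1(I_{t_0,\delta},\R^2)$ around the reference $(a,b)$ from \eqref{eq:init2}; one shows $\mathcal{F}$ maps this ball to itself and is a contraction, using \eqref{eqn:28a} from Proposition~\ref{Prop:Splitting} to control $|g(u_{\lambda[a]}(\cdot,t))-(\text{ref})|$ and the smooth dependence of $\lambda[a]$, $u_{\lambda[a]}$ on $a$ to control the difference of two candidates.

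The regularity claim --- that $g_\#(u)$ lands in $C^1$ \emph{in $t$} and is a $C^1$ \emph{functional} of $u$ --- I would get from the regularity of the building blocks: $t\mapsto u(\cdot,t)$ is $C^1$ into $\langle y\rangle^3L^\infty$ by hypothesis $u\in C^1(I_{t_0,\delta},\langle y\rangle^3L^\infty)$; $a\mapsto\lambda[a]$ and $(\lambda,u)\mapsto u_\lambda$ are smooth; and $g$ is $C^1$ on $U_{\epsilon_0}$ by Proposition~\ref{Prop:Splitting}. Composing, $t\mapsto g(u_\lambda(\cdot,t))$ is $C^1$, and differentiating the fixed-point identity $\mathcal{F}(g_\#(u),\cdot)=g_\#(u)$ (whose linearization $I-\partial_{(a,b)}\mathcal{F}$ is invertible, being $I$ plus something small by the contraction estimate) gives $C^1$ dependence of $g_\#(u)$ on $u$ via the implicit function theorem in these function spaces. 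Finally, setting $\phi(\cdot,\tau(t)):=u_\lambda(\cdot,t)-V_{g_\#(u)(t)}$ with $\tau(t)=\int_0^t\lambda^{-2}$ and noting $e^{-\tfrac{a(t)}{2}y^2}dy$ is the weight attached to the just-obtained orthogonality gives \eqref{eqn:splitting2}--\eqref{eqn:splitting2conditions}, and positivity of $b(t)$ on all of $I_{t_0,\delta}$ follows from the same $\epsilon\ll b_0$ mechanism used at the end of the proof of Proposition~\ref{Prop:Splitting}. The main obstacle I anticipate is bookkeeping the coupled dependence $\lambda\leftrightarrow a$ cleanly enough that the contraction constant is genuinely $<1$: one must quantify how a perturbation of $a$ in $C^1(I_{t_0,\delta})$ propagates through $\lambda[a]$, through the rescaling $y\mapsto\lambda y$ inside $u_\lambda$ (which costs a derivative of $u$ in $y$, controlled since $\partial_y u_\lambda$ is bounded), and finally through the Lipschitz bound on $g$; each step loses only a factor that is $O(\lambda_0^{-2}\delta)$ or $O(\|u_\lambda-V_{ab}\|_{3,0}/b)$, both $\ll1$ by hypothesis, so the composition is still a contraction, but verifying this requires the estimate \eqref{eqn:28a} to be applied with the reference point $V_{a(t),b(t)}$ moving in $t$.
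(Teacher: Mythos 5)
Your overall strategy matches the paper's: slave $\lambda$ to $a$ via $\lambda[a](t)=(\lambda_0^2-2\int_{t_0}^t a)^{1/2}$, then run an implicit-function-theorem/fixed-point argument for $(a,b)$ in $C^1(I_{t_0,\delta},\mathbb{R}^2)$, with the decisive smallness $O(\lambda_0^{-2}\delta)\ll1$ controlling the extra term created by the $a$-dependence of $\lambda$. The paper phrases this as the implicit function theorem applied to $G_\#(\mu,u)(t):=G(\mu(t),u_{\lambda(a)}(\cdot,t))=0$, splitting $\partial_\mu G_\#=A(t)+B(t)$ with $A$ the already-invertible linearization from Proposition \ref{Prop:Splitting} and $B$ the new $\partial_v G\cdot\partial_\mu u_{\lambda(a)}$ piece; you phrase it as a fixed point of $\mathcal F(a,b)=g(u_{\lambda[a]}(\cdot,\cdot))$. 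These are two formulations of the same argument.

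There is, however, one genuine gap in your version. When you trace how a perturbation of $a$ propagates through the rescaling $y\mapsto\lambda y$ inside $u_\lambda$, you write that this ``costs a derivative of $u$ in $y$, controlled since $\partial_y u_\lambda$ is bounded.'' That is not among the hypotheses: $u\in\mathcal U_{t_0,\delta,\epsilon_0,\lambda_0}$ only requires $u\in C^1(I_{t_0,\delta},\langle y\rangle^3 L^\infty)$ plus \eqref{eq:init2}, which controls $t$-regularity and $y$-growth but gives no bound on $\partial_y u$. The paper is careful about exactly this point: it computes $\partial_a v=\partial_a(\lambda)\lambda^{-1}[-v+y\partial_y v]$ only ``assuming for the moment that $v$ is differentiable,'' then integrates by parts so that the resulting formula for $B(t)\alpha$ involves $\int (1+\partial_y\cdot y)B(t)(y)\,v(y,t)\,dy$ with no derivative falling on $v$ (the derivatives land on the smooth Gaussian-times-polynomial weights defining $G$), and then removes the differentiability assumption by density. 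Your contraction estimate needs the same device: since $g$ is defined through pairings against $e^{-a y^2/4}$ times polynomials, the derivative-in-$y$ produced by the chain rule must be transferred onto those test functions. Without that step the Lipschitz estimate on $a\mapsto u_{\lambda[a]}$ in $\langle y\rangle^3 L^\infty$ is unjustified for the class of $u$ the proposition actually covers.
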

\begin{proof}  Recall the definition $X:= \langle y\rangle^3 L^\infty$
with the corresponding norm. For any function $a\in
\mathcal{A}_{t_0, \delta},$ we define a function
$$\lambda(a,t):=(\lambda_{0}^{2}-2\int_{t_0}^{t}a(s)ds)^{\frac{1}{2}}.$$ Let $\lambda(a)(t):=\lambda(a,t)$.
Define the $C^1$ map $G_\#:
C^{1}(I_{t_0, \delta},\mathbb{R}^+)\ \times C^{1}(I_{t_0,
\delta},\mathbb{R}^+)\times C^{1}(I_{t_0, \delta},X)\rightarrow
C^{1}(I_{t_0, \delta},\mathbb{R})\ \times C^{1}(I_{t_0,
\delta},\mathbb{R})$ as $$G_\#(\mu,u)( t):=G(\mu(t),u_{\lambda
(a)}(\cdot,t)),$$ where $t\in I_{t_0, \delta}$, $\mu=(a,b)$ and
$G(\mu,u)$ is the same as in the proof of Proposition
~\ref{Prop:Splitting}. The orthogonality conditions on the
fluctuation can be written as $G_\#(\mu,u)=0$. Using the implicit
function theorem we will first prove that for any $\mu_0:=(a_0,
b_0)\in \mathcal{A}_{t_0, \delta}\times \mathcal{B}_{t_0, \delta,
\epsilon_0}$ there exists a neighborhood $\mathcal{U}_{\mu_0}$ of
$V_{\mu_0}$ and a unique $C^1$ map
$g_\#:\mathcal{U}_{\mu_0}\rightarrow \mathcal{A}_{t_0, \delta}\times
\mathcal{B}_{t_0, \delta, \epsilon_0}$ such that $G_\#(g_\#(v),v)=0$
for all $v\in \mathcal{U}_{\mu_0}$.

We claim that $\p_\mu G_\#(\mu,u)$ is invertible, provided
$u_{\lambda (a)}$ is close to $V_\mu$. We compute
\begin{equation}\label{linearization1}
\p_\mu G_\#(\mu,u)( t)=\p_\mu G(\mu(t), u_{\lambda (a)}(\cdot,t))=A(
t) + B( t),
\end{equation}
where
\begin{equation} \label{linearization2}
A( t):= \p_{\mu}G(v,\mu)|_{v= u_{\lambda(a)}},\ B(t):=
\p_vG(v,\mu)|_{v= u_{\lambda(a)}}\p_\mu u_{\lambda (a)}.
\end{equation}
Note that in \eqref{linearization2} $\p_v G(\mu, v)|_{v= u_{\lambda
(a)}}$ is acting on $\p_\mu u_{\lambda (a)}$ as an integral w.r. to
$y$. We have shown in the proof of Proposition ~\ref{Prop:Splitting}
that the first term on the r.h.s. is invertible, provided
$u_{\lambda (a)}$ is close to $V_\mu$.

Now we show that for $\delta>0$ sufficiently small the second term
on the r.h.s. is small. Let $v:= u_{\lambda (a)}$. Assuming for the
moment that $v$ is differentiable, we compute
$\p_av=\p_a(\lambda)\lambda^{-1}[-v+y\p_yv].$ Furthermore,
$\p_a(\lambda)\alpha=-\lambda^{-1}(t)\int_{0}^{t}\alpha(s)ds. \, \,
$ Combining the last two equations together with Equation
(~\ref{linearization2}) we obtain
\begin{equation*}
[B( t)\alpha](t)=-\int {B(t)(y) (-v+y\p_yv)(y, t) dy}\ \lambda
^{-2}(t)\int_{0}^{t}\alpha(s)ds.
\end{equation*}
Integrating by parts the second term in parenthesis gives
\begin{equation}
[B( t) \alpha](t)=\lambda^{-2}(t)\int_{0}^{t}\alpha(s)ds\int {(1
+\p_y\cdot y) B( t)(y)  v(y, t) dy}.
\end{equation}
Now, using a density, or any other, argument we remove the
assumption  of the differentiability on $v$ and conclude that this
expression holds without this assumption. Using this expression and
the inequality $\lambda(t) \geq \sqrt{2}\lambda_0$, provided $\delta
\le (4 \sup a)^{-1}\lambda_0^{2} \le 1/4 \lambda_0^{2}$, we estimate
\begin{equation}
\|B( t) \alpha\|_{L^\infty([t_0,t_0+\delta])}\lesssim \delta
\lambda_0^{-2}\|v
\|_{L^\infty}\|\alpha\|_{L^\infty([t_0,t_0+\delta])}.
\end{equation}
So $B( t)$ is small, if $\delta \lesssim (\lambda_0^{-2}\|v
\|_{L^\infty})^{-1}$, as claimed. This shows that $\p_\mu
G_\#(\mu,u)$ is invertible, provided $u_{\lambda (a)}$ is close to
$V_\mu$. Proceeding as in the proof of Proposition
\ref{Prop:Splitting} we conclude
the proof of Proposition \ref{Prop:LMSplitting2}.
\end{proof}

We say that $\lambda(t)$ is admissible on $I_{t_0, \delta}$ if
$\lambda\in C^{2}(I_{t_0, \delta},\mathbb{R}^{+})\ \mbox{and}\
-\lambda\partial_{t}\lambda \in [1/4, 1]$.
\begin{lemma} \label{induction}
Assume $u \in C^{1}([0,t_*), \langle x\rangle^3 L^\infty)$ and
$\inf_{x\in \mathbb{R}}u(\cdot,t)>0$. Furthermore, assume there is a
$t_0 \in [0, t_*)$ and $u_{\lambda_0} (\cdot, t_0) \in
U_{\epsilon_0/2}$ for some $\lambda_0$ and for $\epsilon_0$ given in
Proposition ~\ref{Prop:Splitting}. Then there are $\delta = \delta
(\lambda_0, u)>0$ and $\lambda (t)$, admissible on $I_{t_0,
\delta}$, s.t. \eqref{eqn:splitting2} and
\eqref{eqn:splitting2conditions} hold on $I_{t_0, \delta}$.
\end{lemma}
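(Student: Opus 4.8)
The plan is to deduce Lemma~\ref{induction} from Proposition~\ref{Prop:LMSplitting2} by checking that the hypothesis $u_{\lambda_0}(\cdot,t_0)\in U_{\epsilon_0/2}$ propagates, for a short time, into membership of $u$ in a set of the form $\mathcal{U}_{t_0,\delta,\epsilon_0,\lambda_0}$ on which that proposition applies. First I would use the assumption $u_{\lambda_0}(\cdot,t_0)\in U_{\epsilon_0/2}$ together with Proposition~\ref{Prop:Splitting}: there exist $(a_0,b_0)\in[1/4,1]\times(0,\epsilon_0/2]$ with $\|u_{\lambda_0}(\cdot,t_0)-V_{a_0,b_0}\|_{3,0}\ll b_0$, and in fact $g(u_{\lambda_0}(\cdot,t_0))=(a_0,b_0)$ with $\eta:=u_{\lambda_0}(\cdot,t_0)-V_{a_0,b_0}\perp 1,\,1-a_0y^2$ in $L^2(\R,e^{-a_0y^2/2}dy)$. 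This fixes the initial values of the modulation parameters and of $\lambda$ (namely $\lambda(t_0)=\lambda_0$).

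Next I would invoke continuity in $t$. Since $u\in C^1([0,t_*),\langle x\rangle^3L^\infty)$ and $\inf_x u(\cdot,t)>0$, the rescaled function $u_{\lambda}(y,t)=\lambda^{-1}(t)u(\lambda(t)y,t)$ depends continuously on $t$ in $\langle y\rangle^3L^\infty=X$ once $\lambda$ is chosen admissibly (any admissible $\lambda$ stays bounded above and bounded away from $0$ on a short interval because $-\lambda\p_t\lambda\in[1/4,1]$ forces $\lambda^2(t)=\lambda_0^2-2\int_{t_0}^t a(s)ds$ with $a\in[1/4,1]$, hence $\lambda(t)\to\lambda_0$ as $t\to t_0$, uniformly over admissible choices). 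Concretely, set $a(t)\equiv a_0$, $b(t)\equiv b_0$ as a first trial and $\lambda(t)=(\lambda_0^2-2a_0(t-t_0))^{1/2}$; then by continuity there is $\delta_1>0$ with
$$\sup_{t\in I_{t_0,\delta_1}} b_0^{-1}\|u_{\lambda}(\cdot,t)-V_{a_0,b_0}\|_{3,0}\ll 1,$$
which is exactly the condition \eqref{eq:init2} defining $\mathcal{U}_{t_0,\delta_1,\epsilon_0,\lambda_0}$, so $u\in\mathcal{U}_{t_0,\delta_1,\epsilon_0,\lambda_0}$. Shrinking $\delta$ further if necessary so that also $\lambda_0^2\delta\ll1$ (and $\delta\le (4\sup a)^{-1}\lambda_0^2$, as required in Proposition~\ref{Prop:LMSplitting2}), Proposition~\ref{Prop:LMSplitting2} yields the $C^1$ map $g_\#$ and hence functions $(a(t),b(t))=g_\#(u)(t)\in\mathcal{A}_{t_0,\delta}\times\mathcal{B}_{t_0,\delta,\epsilon_0}$ and the associated $\lambda(t)$ with $\lambda(t_0)=\lambda_0$, $-\lambda\p_t\lambda=a$, for which \eqref{eqn:splitting2} and \eqref{eqn:splitting2conditions} hold on $I_{t_0,\delta}$. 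Finally, admissibility of $\lambda$ in the $C^2$ sense follows because $a=-\lambda\p_t\lambda\in[1/4,1]$ by construction and $a\in C^1$, so $\lambda\in C^2$; this is the claim.

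The main obstacle is a consistency/circularity point rather than a deep estimate: condition \eqref{eq:init2} that defines $\mathcal{U}_{t_0,\delta,\epsilon_0,\lambda_0}$ already refers to some $a(t),b(t)$ and a $\lambda$ solving $-\lambda\p_t\lambda=a$, so one must exhibit \emph{one} admissible triple making \eqref{eq:init2} hold before Proposition~\ref{Prop:LMSplitting2} can be applied to produce the canonical one. The resolution is the trial choice $a\equiv a_0$, $b\equiv b_0$ above, which works on a short enough interval by continuity of $t\mapsto u_{\lambda}(\cdot,t)$ in $X$ — and this continuity is where one must be slightly careful, verifying that $u_{\lambda(a_0,\cdot)}(\cdot,t)\to u_{\lambda_0}(\cdot,t_0)$ in $\langle y\rangle^3L^\infty$ as $t\to t_0$ using $u\in C^1([0,t_*),\langle x\rangle^3L^\infty)$, $\inf u>0$, and $\lambda(t)\to\lambda_0$. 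Everything else is a bookkeeping of the smallness requirements $\delta\ll\lambda_0^{-2}$, $\delta\le\frac14\lambda_0^2$, and $\|u_{\lambda}(\cdot,t)-V_{a_0,b_0}\|_{3,0}\ll b_0$, all of which can be met simultaneously by taking $\delta=\delta(\lambda_0,u)$ small.
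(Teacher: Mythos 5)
Your proposal is correct and follows essentially the same route as the paper's proof: show that the hypotheses put $u$ in $\mathcal{U}_{t_0,\delta,\epsilon_0,\lambda_0}$ for small enough $\delta$, then invoke Proposition~\ref{Prop:LMSplitting2}. The paper's own argument is just two sentences and asserts the membership $u\in\mathcal{U}_{t_0,\delta,\epsilon_0,\lambda_0}$ without elaboration; your observation that this requires first exhibiting a trial admissible triple (here $a\equiv a_0$, $b\equiv b_0$, $\lambda(t)=(\lambda_0^2-2a_0(t-t_0))^{1/2}$) and then appealing to continuity of $t\mapsto u_\lambda(\cdot,t)$ in $\langle y\rangle^3L^\infty$ is exactly the content the paper leaves implicit, and you resolve it correctly.
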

\begin{proof}
The conditions $u \in C^{1}([0,t_*), \langle x\rangle^3L^\infty),$
$\inf_{x\in \mathbb{R}}u(\cdot,t)>0$ and $u_{\lambda_0} (t_0) \in
U_{\epsilon_0/2}$ imply that there is a $\delta = \delta
(\lambda_0,u)$ s.t. $u \in U_{t_0, \delta, \epsilon_0, \lambda_0}$.
By Lemma \ref{Prop:LMSplitting2}, the latter inclusion implies that
there is $\lambda (t)$, admissible on $I_{t_0, \delta},\ \lambda
(t_0)= \lambda_0, $ s.t. \eqref{eqn:splitting2} and
\eqref{eqn:splitting2conditions} hold on $I_{t_0, \delta}$.
\end{proof}


\section{A priori Estimates}\label{SEC:ApriEst}
In this section we assume that $u(x,t)$ is a solution to
(~\ref{eq:MCF}) satisfying the following conditions
\begin{itemize}
\item[(A)]
For $0\leq t\leq t_{\#}$ there exist $C^{1}$ functions $a(t)$ and
$b(t)$ such that $u(x,t)$ can be represented as
\begin{equation}\label{eqn:split2}
u(x,t)=\lambda(t)[\lb\frac{2(d-1)+b(t)y^{2}}{a(t)+\frac{1}{2}}\rb^{\frac{1}{2}}+\phi(y,\tau)]
\end{equation}
where $\phi(\cdot,\tau)\perp e^{-\frac{a(t)}{2} y^2},\
(1-a(t)y^{2})e^{-\frac{a(t)}{2} y^2}$ (see (~\ref{eqn:split})),
$y=\lambda^{-1}(t)x$ and $\tau(t):= \int_{0}^{t}\lambda^{-2}(s)ds$,
$-\lambda(t)\partial_{t}\lambda(t)=a(t)$.
\end{itemize}

In the following we define estimating functions to control the
functions $\phi(y,\tau),$ $a(t(\tau))$ and $b(t(\tau)).$
\begin{equation}\label{eq:majorants}
\begin{array}{lll}
M_{m,n}(T):=\displaystyle\max_{\tau\leq
T}\beta^{-\frac{m+n+1}{2}}(\tau)\|
\phi(\cdot,\tau)\|_{m,n},\\
A(T):=\displaystyle\max_{\tau\leq T}\beta^{-2}(\tau)|a(t(\tau))-\frac{1}{2}+\frac{1}{d-1}b(t(\tau))|,\\
B(T):=\displaystyle\max_{\tau\leq
T}\beta^{-7/4}(\tau)|b(t(\tau))-\beta(\tau)|.
\end{array}
\end{equation} with $(m,n)=(3,0),\ (\frac{11}{10},0),\ (2,1), \ (1,2)$ and with the function $\beta(\tau)$ defined as
\begin{equation}\label{FunBTau}
\beta(\tau):=\frac{1}{\frac{1}{b(0)}+\frac{\tau}{d-1}}.
\end{equation}
Furthermore
we define a vector $M$ as
\begin{equation}\label{eq:vector}
M:=(M_{i,j}), \ (i,j)=(3,0),\ (\frac{11}{10},0),\ (1,2),\ (2,1)
\end{equation} and its sum $|M|:=\sum_{i,j}M_{i,j}$.

We say that a polynomial $P(M,A)$ is monotonically nondecreasing if
$P(M_{1},A_{1})\geq P(M_{2},A_{2})$ whenever $A_{1}\geq A_{2}$ and
$M^{(1)}_{i,j}\geq M^{(2)}_{i,j}$ for all $i,j$, with
$M_{1}:=(M^{(1)}_{i,j}), \ M_{2}:=(M^{(2)}_{i,j}).$ In what follows
the symbols $P(M,A)$ and $P(M)$ to stand for different monotonically
nondecreasing polynomials of the vector $M$ and the variable $A$.

In this section we present a priori bounds on the fluctuation $\phi$
proved in later sections.

\begin{proposition}\label{Pro:Main2}
Suppose that $u(x,t)$ is a solution to (~\ref{eq:MCF}) satisfying
Condition (A) and its datum $u_{0}(x)$ satisfies all the conditions
in Theorem ~\ref{maintheorem} except the ones in Statement (4). Let
the parameters $a(t)$, $b(t)$ and the function $\phi(y,\tau)$ be the
same as in (~\ref{eqn:split2}). Then there exists a nondecreasing
polynomial $P(M,Z)$ of the 4-vector $M$ and variable $A$ such that
the functions $a$, $b$ and $\phi$ satisfy the estimates
\begin{equation}\label{eq:B}
B(\tau) \lesssim
1+P(M(\tau),A(\tau)),
\end{equation}
\begin{equation}\label{eq:A}
A(\tau)\lesssim A(0)+1+\beta(0)P(M(\tau),A(\tau)),
\end{equation}
\begin{equation}\label{eq:M30}
M_{3,0}(\tau)\lesssim M_{3,0}(0)+\beta^{ \frac{1}{2}
}(0)P(M(\tau),A(\tau)),
\end{equation}
\begin{equation}\label{eq:M20}
M_{\frac{11}{10},0}(\tau)\lesssim
M_{\frac{11}{10},0}(0)+M_{3,0}(\tau)+ \beta^{ \frac{1}{2}
}(0)P(M(\tau),A(\tau)),
\end{equation}
\begin{equation}\label{eq:M21}
M_{2,1}(\tau) \lesssim M_{2,1}(0)+M_{3,0}(\tau)+\beta^{ \frac{1}{2}
}(0)P(M(\tau),A(\tau)),
\end{equation}
\begin{equation}\label{eq:M12}
M_{1,2}(\tau)\lesssim M_{1,2}(0)+M_{3,0}(\tau)+M_{2,1}(\tau)+\beta^{
\frac{1}{2} }(0)P(M(\tau),A(\tau)),
\end{equation}
for any $\tau\in[0,\tau(t_{\#})]$ provided that $v(y,\tau)\geq
\frac{1}{4}\sqrt{2(d-1)}$ $$v(\cdot,\tau)\in \langle y\rangle
L^{\infty},\
\partial_{y}v(\cdot,\tau),\ \partial_{y}^{2}v(\cdot,\tau)\in
L^{\infty},|M(\tau)|\ \text{and}\ A(\tau),\ B(\tau)\leq \beta^{-
\frac{1}{4} }(\tau) $$ in the same interval $\tau\in
[0,\tau(t_{\#})]$.
\end{proposition}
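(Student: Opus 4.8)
The plan is to derive the coupled system of a priori estimates by substituting the decomposition \eqref{eqn:split2} into the rescaled equation \eqref{eqn:BVNLH}, projecting onto the relevant spectral subspaces, and then running a bootstrap in which every nonlinear term is controlled by the majorants $M$, $A$, $B$ times a positive power of $\beta(0)$. First I would write $v = V_{ab} + \phi$ and insert this into \eqref{eqn:BVNLH}; since $V_{ab}$ is only an \emph{approximate} static solution, there is a nonzero residual $F(V_{ab})$ which must be computed explicitly. The equation for $\phi$ then takes the schematic form $\partial_\tau \phi = -L_{ab}\phi + (\text{residual from } V_{ab}) + (\text{terms from } \dot a, \dot b) + N(\phi)$, where $L_{ab}$ is the linearization (a harmonic-oscillator-type operator after the gauge transform of Section~\ref{sec:Gauge}) and $N(\phi)$ collects the quadratic and higher terms coming from $\tfrac{1}{1+(\partial_y v)^2}$ and $\tfrac{d-1}{v}$. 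The orthogonality conditions $\phi \perp 1,\ 1-ay^2$ are preserved up to terms involving $\dot a$, and differentiating these conditions in $\tau$ produces the modulation equations for $a$ and $b$ — this is exactly how \eqref{eq:B} and \eqref{eq:A} arise: $\dot b$ and $\dot a - (\text{its leading value})$ are expressed as inner products of the residual and $N(\phi)$ against the (almost) zero modes, hence bounded by $1 + P(M,A)$.

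Next I would treat each $M_{m,n}$ bound \eqref{eq:M30}--\eqref{eq:M12} by a Duhamel/integral-equation argument for $\phi$ in the weighted $L^\infty_{m,n}$ norms. The key linear input — which the proposition allows me to assume is proved in the later sections \ref{SEC:EstB}--\ref{SEC:estM12} — is a decay estimate for the semigroup generated by $-L_{ab}$ acting on the subspace orthogonal to $1$ and $1-ay^2$, giving a gain of a power of $\beta$ on each norm. Convolving this decay against the source terms, whose sizes are $O(\beta^{(m+n+1)/2}(1+P(M,A)))$ plus the already-controlled lower-order norms (the residual $F(V_{ab})$ is quadratic in $b$, which is why a factor $\beta^{1/2}(0)$ can always be extracted after using $b \lesssim \beta$), yields the stated hierarchy: $M_{3,0}$ closes first, then $M_{11/10,0}$, $M_{2,1}$ depend on $M_{3,0}$, and $M_{1,2}$ depends on all of them. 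The ordering reflects that controlling a derivative or a less-weighted norm of $\phi$ requires first controlling $\|\phi\|_{3,0}$, because the nonlinearity $N(\phi)$ involves $\partial_y\phi$ and $\partial_y^2\phi$ multiplied by things estimated through the weighted sup-norm.

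The main technical points to be careful about are: (1) producing the explicit residual $F(V_{ab})$ and checking it is $O(\beta^2)$ in the appropriate norms — this is where the clever choice $c = a + \tfrac12$ rather than $c = 2a$ pays off, since it cancels the would-be $O(\beta)$ piece of the residual against the contribution of the neglected $\partial_\tau$ term; (2) the change of variables between $t$ and $\tau$ and the bookkeeping of $\lambda$, using $-\lambda\partial_t\lambda = a$ and $\beta(\tau)$ as defined in \eqref{FunBTau}, so that $b(\tau) = \beta(\tau)(1 + O(\beta^{3/4}))$ is consistent with $B(\tau) \le \beta^{-1/4}(\tau)$; and (3) ensuring the nonlinear terms genuinely close, i.e. that the a priori hypotheses $|M(\tau)|, A(\tau), B(\tau) \le \beta^{-1/4}(\tau)$ and $v \ge \tfrac14\sqrt{2(d-1)}$ are enough to bound $N(\phi)$ and the denominators $1+(\partial_y v)^2$ and $v$ uniformly. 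I expect the hardest step to be the interplay between the modulation equations and the $\phi$-estimates — one must verify that the feedback loop (modulation parameters feed into the $\phi$ equation, and $\phi$ feeds back into the modulation equations) does not lose powers of $\beta$, which is precisely what makes the $\beta^{1/2}(0)$ (rather than $\beta^0$) prefactor on $P(M,A)$ in \eqref{eq:M30}--\eqref{eq:M12} both true and essential for the subsequent global argument in Section~\ref{SecMain}. The detailed verification of the linear decay estimates underlying all of this is deferred, as the statement permits, to Sections \ref{SEC:EstB}--\ref{SEC:estM12}.
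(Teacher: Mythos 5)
Your proposal correctly identifies the broad architecture (Lyapunov--Schmidt split, modulation equations from projecting onto the almost zero modes, Duhamel estimates for the remaining $M_{m,n}$), and the ordering you propose for closing the hierarchy $M_{3,0} \Rightarrow M_{11/10,0}, M_{2,1} \Rightarrow M_{1,2}$ matches the paper's. However, there is a genuine gap in the linear step that the paper spends an entire section (Section~\ref{SEC:Rescale}) addressing and which your sketch glosses over. You write that you would invoke ``a decay estimate for the semigroup generated by $-L_{ab}$,'' but after the gauge transform the linear operator in \eqref{eq:xi} is
\[
L(a,b)=-\partial_y^2+\tfrac{a^2+\partial_\tau a}{4}\,y^2-\tfrac{3a}{2}-\tfrac{(d-1)(a+\tfrac12)}{2(d-1)+by^2},
\]
whose $y^2$-coefficient is $\tau$-dependent; there is no semigroup here, and the propagator of a harmonic-oscillator operator with time-varying frequency does not automatically inherit the decay of the frozen one. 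The paper resolves this by constructing, for each final time $T$, a new trajectory $\lambda_1$ tangent to $\lambda$ at $\tau=T$ with constant $\alpha=a(T)$, re-expressing the fluctuation in the variables $(z,\sigma)$ tied to $\lambda_1$, and proving (Proposition~\ref{NewTrajectory}) that $|\lambda/\lambda_1-1|\lesssim\beta$. Only after this does the linear part become the autonomous $\mathcal{L}_\alpha=L_\alpha+V$, for which the propagator estimates of Proposition~\ref{PRO:propagator} (imported from \cite{BrKu,DGSW,GaSi}) apply. Without this reduction your Duhamel argument has no controllable linear propagator to convolve against.

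A secondary omission: you attribute control of the nonlinearity to the lower bound $v\ge\tfrac14\sqrt{2(d-1)}$, but the cubic term $N_2=-e^{-ay^2/4}(\partial_y v)^2\partial_y^2 v/(1+(\partial_y v)^2)$ is not estimable from the weighted $L^\infty$ majorants $M_{m,n}$ alone; the paper needs the \emph{pointwise} derivative bounds $|v^{-1/2}\partial_y v|\lesssim\beta^{1/2}$, $|\partial_y^n v|\lesssim\beta^{n/2}$ of Proposition~\ref{PROP:UppLow}, which are established separately by a maximum-principle argument and are independent of the spectral machinery. These are hypotheses you would have to import explicitly, not consequences of your bootstrap. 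Finally, your characterization of the source of the $\beta^{1/2}(0)$ prefactor (a cancellation between the $c=a+\tfrac12$ choice and the dropped $\partial_\tau$ term) is not quite what happens: the prefactor comes from integrating $\beta^{(m+n+2)/2}$ against the exponentially decaying propagator via the integral estimate \eqref{INT} and then dividing by $\beta^{(m+n+1)/2}(T)$, leaving one extra half-power of $\beta$ which is then worsened to $\beta^{1/2}(0)$ by monotonicity of $\beta$.
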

The proof of Equations (~\ref{eq:B}) and (~\ref{eq:A}) is given in
Section ~\ref{SEC:EstB}. Equation (~\ref{eq:M30}), (~\ref{eq:M20}),
(~\ref{eq:M21})  and (~\ref{eq:M12}) are proved in Sections
~\ref{SEC:EstM1}, ~\ref{SEC:EstM2}, ~\ref{SEC:EstM21} and
~\ref{SEC:estM12} respectively.

\begin{corollary}\label{cor:aprior}
Let $\phi$ be defined in (~\ref{eqn:split2}) and assume $|M(0)|,\
A(0),\ B(0)\lesssim 1$. Assume there exists an interval $[0,T]$ such
that for $\tau\in [0,T]$,
$$
|M(\tau)|, A(\tau),\ B(\tau)\leq \beta^{-\frac{1}{4}}(\tau).$$
Then on the same time interval the parameters $a$, $b$ and the
function $\phi$ satisfy the following estimates
\begin{equation}\label{EstABM}
|M(\tau)|,\ A(\tau),\ B(\tau)\lesssim 1.
\end{equation}
\end{corollary}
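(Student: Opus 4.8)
The plan is to derive Corollary \ref{cor:aprior} from Proposition \ref{Pro:Main2} by a bootstrap (continuity) argument. First I would verify that the hypotheses of Proposition \ref{Pro:Main2} are in force on $[0,T]$: the assumption $|M(\tau)|,A(\tau),B(\tau)\le\beta^{-1/4}(\tau)$ is exactly the smallness needed there, and together with the initial bounds $|M(0)|,A(0),B(0)\lesssim1$ it guarantees (via \eqref{eqn:split2} and the definitions of the majorants) that $v(y,\tau)\ge\frac14\sqrt{2(d-1)}$, $v(\cdot,\tau)\in\langle y\rangle L^\infty$, $\partial_y v,\partial_y^2 v\in L^\infty$ for $\tau\in[0,T]$, so all six estimates \eqref{eq:B}--\eqref{eq:M12} hold on $[0,T]$. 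Here I would note that $\beta(0)=b(0)\le\epsilon_0$ is small, so $\beta^{1/2}(0)$ is a small parameter that multiplies every occurrence of the nondecreasing polynomial $P(M,A)$.

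Next I would feed the six inequalities into one another in the order in which they are stated, treating $P(M(\tau),A(\tau))$ as a single monotone quantity. From \eqref{eq:B}, $B(\tau)\lesssim 1+P(M,A)$. Substituting this into the $P$'s appearing in \eqref{eq:A}--\eqref{eq:M12} (legitimate because $P$ is nondecreasing and $B$ only enters through such $P$'s), then using \eqref{eq:A} to control $A(\tau)$ by $A(0)+1+\beta(0)P(M,A)\lesssim 1+\beta(0)P(M,A)$, and finally cascading \eqref{eq:M30}$\Rightarrow$\eqref{eq:M20}$\Rightarrow$\eqref{eq:M21}$\Rightarrow$\eqref{eq:M12} so that each $M_{m,n}(\tau)$ is bounded by $M_{m,n}(0)+\beta^{1/2}(0)P(M,A)\lesssim 1+\beta^{1/2}(0)P(M,A)$, I obtain a single scalar inequality of the form
\begin{equation*}
N(\tau)\le C_0+C_1\,\beta^{1/2}(0)\,P(N(\tau)),
\end{equation*}
where $N(\tau):=|M(\tau)|+A(\tau)+B(\tau)$ and $P$ is a fixed nondecreasing polynomial (absorbing all the $P(M,A)$'s after bounding $A$ and $B$ themselves in terms of $M$ and constants; one has to check the $A$-dependence does not cause a vicious circle, which it does not because in \eqref{eq:A} the polynomial is multiplied by the small factor $\beta(0)$).

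The final step is the standard continuity/bootstrap closure. Let $T'\le T$ be the supremal time on which $N(\tau)\le 2C_0$; by the initial bounds $N(0)\lesssim1\le C_0$ (enlarging $C_0$ if needed) and by continuity of $N$ (which follows from the $C^1$ regularity of $a,b,\phi$) this set is nonempty and relatively open. On $[0,T']$ the displayed inequality gives $N(\tau)\le C_0+C_1\beta^{1/2}(0)P(2C_0)$; since $\beta(0)=b(0)$ can be taken as small as we like (this is where the smallness of $\epsilon_0$, hence of $\delta$, is used), we have $C_1\beta^{1/2}(0)P(2C_0)\le C_0$, so $N(\tau)\le 2C_0$, and in fact $N(\tau)\le \frac32 C_0<2C_0$ strictly, which prevents $T'$ from being a maximum unless $T'=T$. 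Hence $N(\tau)\le 2C_0$ on all of $[0,T]$, i.e. $|M(\tau)|,A(\tau),B(\tau)\lesssim1$, which is \eqref{EstABM}. The only delicate point is the bookkeeping that the cascade of substitutions really does collapse to a single self-improving scalar inequality with the small factor $\beta^{1/2}(0)$ (or $\beta(0)$) in front of the nonlinearity and no hidden large constants; once that algebraic reduction is in place, the continuity argument is routine.
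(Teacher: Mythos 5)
Your plan is the same as the paper's: cascade the a priori estimates \eqref{eq:B}--\eqref{eq:M12} into a single self-improving scalar inequality with a small factor in front of the nonlinearity, then close by a continuity/bootstrap argument. The paper compresses the closure step to one sentence, so your explicit bootstrap is a welcome elaboration. Two small but real issues in your algebraic reduction, however.

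First, $B$ is \emph{not} an argument of the polynomial $P$ in Proposition~\ref{Pro:Main2} --- by the paper's convention $P=P(M,A)$ --- so your step "substitute $B\lesssim 1+P(M,A)$ into the $P$'s appearing in \eqref{eq:A}--\eqref{eq:M12}" is vacuous (harmless, but indicative of a misreading).

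Second, and this one matters: you cannot include $B$ in the master quantity $N$ and still obtain the displayed form $N\le C_0+C_1\beta^{1/2}(0)P(N)$. Among \eqref{eq:B}--\eqref{eq:M12}, only \eqref{eq:B} has no small factor in front of $P(M,A)$: it reads $B\lesssim 1+P(M,A)$. If $N:=|M|+A+B$, then adding that bound to the others yields at best $N\lesssim 1+Q(N)$ with no small prefactor, and the bootstrap does not close (e.g.\ $x\lesssim 1+x^2$ admits large solutions). The fix is exactly what the paper does implicitly: set $\tilde N:=|M(\tau)|+A(\tau)$ (excluding $B$). The cascade \eqref{eq:A}, \eqref{eq:M30}$\Rightarrow$\eqref{eq:M20}$\Rightarrow$\eqref{eq:M21}$\Rightarrow$\eqref{eq:M12}, with $\beta(0)\le\beta^{1/2}(0)$, gives
\begin{equation*}
\tilde N(\tau)\lesssim A(0)+1+|M(0)|+\beta^{1/2}(0)\,Q(\tilde N(\tau))
\end{equation*}
for a fixed nondecreasing scalar polynomial $Q$. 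Your continuity argument then applies verbatim to $\tilde N$ and yields $\tilde N\lesssim 1$ on $[0,T]$. Only afterwards do you invoke \eqref{eq:B}, which now reads $B(\tau)\lesssim 1+P(M(\tau),A(\tau))\lesssim 1$, completing \eqref{EstABM}. With that reordering your proof is correct and coincides with the paper's, while supplying the continuity step the paper leaves implicit.
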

\begin{proof}
By replacing $M_{3,0}(\tau),\ M_{2,1}(\tau)$ on the right hand sides
of (~\ref{eq:M20})-(~\ref{eq:M12}) by the estimates (~\ref{eq:M30})
(~\ref{eq:M21}) we rewrite Equations (~\ref{eq:M30})-(~\ref{eq:M12})
as $$ A(\tau)+|M(\tau)|\lesssim
A(0)+1+|M(0)|+\beta^{1/2}(0)P(|M(\tau)|,A(\tau))
$$ with $P$ being some polynomial. This implies (~\ref{EstABM}) by
the assumptions on $|M(0)|,\ A(0)$ and $B(0).$
\end{proof}
\section{Lower and Upper Bounds of $v$}\label{SEC:LowerBound}
In this section we prove lower and upper bounds for $v$ defined in
(~\ref{eq:definev}). The main tool we use is a generalized form of
maximum principle from ~\cite{LSU}.
\begin{lemma}\label{LM:max}
Suppose $u(y,\tau)$ is a smooth function satisfying the
estimates
$$u_{\tau}-a_{0}(y,\tau)u_{yy}-[a_{1}(y,\tau)+m(\tau)y]u_{y}-a_{2}(y,\tau)u\leq 0;$$
$$\langle y\rangle^{-1} u(y,\tau)\in L^{\infty};$$
\begin{equation}\label{eq:condiMax}
u(y,0)\leq 0 \ \text{if}\ |y|\geq c(0)\ \text{and}\ u(y, \tau)\leq 0
\ \text{if}\ \tau\leq T\ \text{and}\ |y|=c(\tau)
\end{equation}
for some smooth, bounded functions $a_{0}, a_{1}, a_{2}, m, c,$ such
that $a_{0}(y,\tau)\geq 0$ and $c(\tau)\geq 0.$ Then for any
$\tau\leq T$
\begin{equation}\label{eq:resMax}
 u(y,\tau)\leq 0\ \text{if}\ |y|\geq c(\tau).
\end{equation} Moreover, if we replace the condition (~\ref{eq:condiMax})
by the condition that $u(y,0)\leq 0$ for any $y$, then instead of
(~\ref{eq:resMax}) we have $u(y,\tau)\leq 0.$
\end{lemma}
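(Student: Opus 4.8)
The plan is to prove Lemma~\ref{LM:max} as a version of the parabolic maximum principle adapted to an unbounded domain, where the comparison region is the exterior $\{|y| \ge c(\tau)\}$ rather than a slab. First I would reduce the problem to a strict subsolution by the standard exponential shift: for small $\varepsilon > 0$ and a large constant $K$ to be chosen, set $\tilde u(y,\tau) := u(y,\tau) - \varepsilon e^{K\tau}$, which satisfies the \emph{strict} inequality $\tilde u_\tau - a_0 \tilde u_{yy} - [a_1 + m y]\tilde u_y - a_2 \tilde u < 0$ provided $K > \sup|a_2|$; since $u \le 0$ on the relevant portion of the parabolic boundary, $\tilde u < 0$ there. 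It suffices to show $\tilde u(y,\tau) \le 0$ on $\{|y| \ge c(\tau)\}$ for all $\tau \le T$ and then let $\varepsilon \to 0$.

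Next I would run the contradiction argument. Suppose $\tilde u > 0$ somewhere in the closed exterior parabolic region $Q_T := \{(y,\tau) : 0 \le \tau \le T,\ |y| \ge c(\tau)\}$. The weighted growth condition $\langle y\rangle^{-1} u(\cdot,\tau) \in L^\infty$ controls $u$ at spatial infinity; combined with the subtracted term $\varepsilon e^{K\tau}$ this is \emph{not} quite enough to force a maximum to be attained, so I would insert an auxiliary barrier. The natural choice, given the linear coefficient $m(\tau) y$ in the drift, is a Gaussian-type comparison function $h(y,\tau) := e^{\sigma \tau}(1 + \delta y^2)$ or, more robustly, $h(y,\tau) := \delta e^{\sigma\tau}\cosh(\rho y)$ with $\sigma$ chosen large relative to $\sup a_0$, $\sup|a_1|$, $\sup|m|$, $\sup|a_2|$ and $\rho$ small; one checks $h_\tau - a_0 h_{yy} - [a_1 + my] h_y - a_2 h \ge 0$ on $Q_T$ for suitable $\sigma, \rho$. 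Then for each $\delta > 0$ the function $\tilde u - h$ tends to $-\infty$ as $|y|\to\infty$ uniformly on $[0,T]$, so if $\tilde u - h > 0$ somewhere it attains a positive maximum at an interior point $(y_0,\tau_0)$ with $|y_0| > c(\tau_0)$ and $\tau_0 > 0$ (the boundary pieces being excluded since $\tilde u < 0 \le h$ there). At that point $\p_\tau(\tilde u - h) \ge 0$, $\p_y(\tilde u - h) = 0$, $\p_y^2(\tilde u - h) \le 0$, and $\tilde u - h > 0$; feeding this into the differential inequality for $\tilde u - h$ (which is $\le$ the difference of the two inequalities, hence $< 0$) gives a contradiction. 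Letting $\delta \to 0$ then $\varepsilon \to 0$ yields \eqref{eq:resMax}.

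For the final sentence of the lemma — the case where $u(y,0) \le 0$ everywhere — the argument is identical but simpler: $Q_T$ is replaced by the full strip $\{0 \le \tau \le T\}$, the lateral boundary condition at $|y| = c(\tau)$ is not needed, and the same barrier-plus-contradiction scheme shows $\tilde u - h \le 0$ throughout, giving $u \le 0$ after passing to the limits. I would remark that this is exactly the form of the maximum principle in Lipman--Levander--Sapiro--Uraltseva~\cite{LSU} specialized to the present coefficients, so in the paper one could also simply cite \cite{LSU} and verify that the hypotheses (ellipticity $a_0 \ge 0$, boundedness of all coefficients, the growth bound on $u$, and the sign of $c$) match; I would still spell out the barrier since the linear-in-$y$ drift term $m(\tau)y$ is unbounded and needs the slightly tuned comparison function.

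The main obstacle I anticipate is getting the barrier to simultaneously dominate the unbounded drift $m(\tau)y\,\p_y h$ and the $a_2 h$ term while still decaying faster than $u$ grows: a polynomial barrier $1 + \delta y^2$ handles the drift (the bad term $m y \cdot 2\delta y = 2\delta m y^2$ is absorbed by $\sigma \delta y^2$ for $\sigma$ large) but does \emph{not} beat the $\langle y\rangle^{-1} u \in L^\infty$ growth of $u$, so $\tilde u - h$ need not go to $-\infty$; conversely $\cosh(\rho y)$ decays fast enough relative to $u$ but its second derivative $\rho^2 \cosh(\rho y)$ competes with the drift term $m y \rho \sinh(\rho y)$, which grows faster, so one must take $\rho$ large — and then $a_0 \rho^2 \cosh$ dominates, which is fine since it has the \emph{right} sign in $h_\tau - a_0 h_{yy} - \ldots \ge 0$ only if we are subtracting $a_0 h_{yy}$... this sign-bookkeeping is the delicate point. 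The clean fix is to use $h(y,\tau) = \delta\, e^{\sigma\tau + \rho\langle y\rangle}$ with $\langle y\rangle = (1+y^2)^{1/2}$: then $\p_y h = \rho h\, y/\langle y\rangle$ is bounded by $\rho h$, $\p_y^2 h \le (\rho^2 + \rho)h$, so $h_\tau - a_0 h_{yy} - (a_1+my)h_y - a_2 h \ge [\sigma - (\sup a_0)(\rho^2+\rho) - (\sup|a_1| + \sup|m|\cdot|y|)\rho - \sup|a_2|]h$, and since $|y|\rho \le \langle y\rangle \rho$ while the $\langle y\rangle$ factor is already inside the exponent, one picks $\rho$ first (any value, say $\rho = 1$) and then $\sigma$ large enough — wait, the $\sup|m|\cdot|y|$ term is still unbounded, so one actually needs $h$ with a quadratic exponent $e^{\sigma\tau + \rho y^2}$ for $\rho$ small, whence $\p_y h = 2\rho y h$, $\p_y^2 h = (2\rho + 4\rho^2 y^2)h$, and the worst term $m y \cdot 2\rho y h = 2\rho m y^2 h$ is beaten by choosing $\sigma$ with an allowance $\sigma \ge C(1 + y^2)$... which again fails to be a constant. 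This circularity is resolved by restricting to a large ball $|y| \le R$, applying the classical maximum principle there with the lateral value on $|y|=R$ controlled by $\langle R\rangle \|\langle y\rangle^{-1}u\|_\infty - \varepsilon e^{K\tau} \to$ negative as... no. The correct and standard resolution, which I would adopt, is: the growth hypothesis $\langle y\rangle^{-1}u \in L^\infty$ means $u(y,\tau) \le C\langle y\rangle$; choose the barrier $h = \varepsilon\bigl(e^{K\tau}(1 + \gamma(y^2 + 2\Lambda \tau))\bigr)$ of Gaussian--heat type where $\Lambda \ge \sup a_0 + \sup|a_1| + \sup|m|$ and $K$ absorbs everything, tuned exactly as in the proof of the Phragmén--Lindelöf maximum principle for parabolic equations with coefficients of at-most-linear growth in $y$; since $u$ grows only linearly and $h$ grows quadratically in $y$, $u - h \to -\infty$, closing the argument. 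Thus the real work is this calibration, and I would present it as the one non-routine computation in the proof.
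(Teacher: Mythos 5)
Your final approach is correct but genuinely different from the paper's. The paper neutralizes the unbounded drift $m(\tau)y\,\partial_y$ by the characteristic change of variable $z := y\,e^{\int_0^\tau m(s)\,ds}$, under which the troublesome term $m y\, u_y$ is exactly cancelled by the extra term coming from $\partial_\tau z|_y = mz$; it then divides out the linear growth by setting $u = e^{\kappa\tau}\langle z\rangle\, w$, which produces a \emph{bounded} function $w$ satisfying a parabolic inequality with \emph{bounded} coefficients on a bounded-in-$z$ region, to which the standard maximum principle of \cite{LSU} applies directly. Your route is a Phragm\'en--Lindel\"of argument in the original variables: subtract a small time-dependent constant to make the subsolution strict, then compare with an auxiliary quadratic barrier $h$ so that $u - h \to -\infty$ as $|y|\to\infty$, localizing the maximum. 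Both work; the paper's is structurally cleaner because the change of variable removes the unbounded drift \emph{before} any barrier is needed, whereas your barrier has to simultaneously beat the $my\,h_y$ term and the linear growth of $u$, which is exactly the calibration you wrestle with.

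One correction to the internal reasoning of your proposal: you claim at one point that the polynomial barrier $1 + \delta y^2$ ``does not beat the $\langle y\rangle^{-1}u\in L^\infty$ growth of $u$, so $\tilde u - h$ need not go to $-\infty$.'' That is false — $\langle y\rangle^{-1}u\in L^\infty$ means $u$ grows at most \emph{linearly}, so a quadratic barrier does dominate at infinity, which you later concede. Your worry about the drift term $2\delta m y^2$ is also resolved without circularity: with $h = \varepsilon e^{K\tau}(1 + \gamma(y^2 + 2\Lambda\tau))$, the term $K h$ supplies $K\gamma y^2\,\varepsilon e^{K\tau}$, which beats $2\gamma|m|y^2\,\varepsilon e^{K\tau}$ whenever $K > 2\sup|m| + \sup|a_2|$, with $K$ a fixed constant independent of $y$; the linear-in-$y$ piece from $a_1 h_y$ is absorbed by Young's inequality. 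So the final barrier you write down does close the argument, even though the detours on the way (the $\cosh$, the $e^{\rho\langle y\rangle}$, and $e^{\rho y^2}$ candidates) were not needed. In a final write-up you should delete the false starts and present only the quadratic barrier computation, or, more in the spirit of the paper, simply note that the substitution $z = y e^{\int_0^\tau m}$ reduces the statement to the bounded-coefficient case already in \cite{LSU}.
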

\begin{proof}
In what follows we only prove the estimate for the region $|y|\geq
c(\tau),$ the estimate for $y\in \mathbb{R}$ is almost the same. We start
with transforming the function $u$ so that the standard maximum
principle can be used. Define a new function $w$ by
\begin{equation}\label{Eq:defineV2}
e^{\kappa \tau}\langle z\rangle w(z,\tau):=u(y,\tau)
\end{equation} with $z:=y
e^{\int_{0}^{\tau}m(s)ds}$ and the scalar $\kappa$ to be chosen
later. Then $w$ is a smooth, bounded function satisfying the
inequality
$$w_{\tau}-a_{3}(z,\tau)w_{zz}-a_{4}(z,\tau)w_{z}-a_{5}(z,\tau)w\leq 0$$
for some bounded, smooth functions $a_{3}, \ a_{4}, a_{5}$ and
especially $a_{3},\ a_{5}\geq 0$ by choosing appropriate $\kappa.$
Moreover
$$w(z,0)\leq 0 \ \text{for}\ |z|\geq c(0),\ \text{and}\
w(z, \tau)\leq 0 \ \text{for}\ \tau\leq T,\ |z|=c(\tau)
e^{\int_{0}^{\tau}m(s)ds}.$$ By the standard maximum principle we
have
$$w(z,\tau)\leq 0\ \text{if}\ \tau\leq T\ \text{and}\ |z|\geq
c(\tau)e^{\int_{0}^{\tau}m(s)ds}.$$ This estimate and the relation
between $w$ and $u$ in (~\ref{Eq:defineV2}) imply the desired result
in the case $|y|\geq c(\tau)$.
\end{proof} Recall the definition of function $g(y,\beta)$ from (\ref{eq:Lower}). The following proposition plays an important
role in our analysis.
\begin{proposition}\label{PROP:UppLow} Assume $v$ satisfies Condition (A) in Section ~\ref{SEC:ApriEst},
$v(y,0)\geq g(y,b_{0})$, $v(y,0)\in \langle y\rangle L^{\infty},$
$|\partial_{y}v(y,0)v^{-1/2}(y,0)|\leq \kappa_{0}\beta^{1/2}(0)$ and
$|\partial_{y}^{n}v(y,0)|\leq \kappa_{0}\beta^{\frac{n}{2}}(0),\
n=2,3,4$, and assume there exists a time $\tau(t_{\#})\geq
\tau_{1}>0$ such that for any $\tau\leq \tau_{1}$, $|M(\tau)|$,
$A(\tau), B(\tau)\leq \beta^{-\frac{1}{4}}(\tau)$ and
\begin{equation}\label{eq:space} v(\cdot,\tau)\in \langle y\rangle
L^{\infty},\
\partial_{y}v(\cdot,\tau),\ \partial_{y}^{2}v(\cdot,\tau)\in
L^{\infty},\ \text{and}\ v(y,\tau) \geq c(\tau)
\end{equation} for some
$c(\tau)> 0.$ Then we have
\begin{equation}\label{eq:comparison}
v(y,\tau)\geq g(y,\beta(\tau)),\ |{v^{- \frac{1}{2}
}(y,\tau)}\partial_{y}v(y,\tau)|\lesssim \beta^{ \frac{1}{2}
}(\tau),\ |\partial_{y}^{n}v(y,\tau)|\lesssim
\beta^{\frac{n}{2}}(\tau),\ n=2,3,4
\end{equation} on the
same interval.
\end{proposition}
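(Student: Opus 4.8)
\textbf{Proof proposal for Proposition \ref{PROP:UppLow}.}

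The plan is to prove the three estimates in \eqref{eq:comparison} by applying the maximum principle of Lemma \ref{LM:max} to $v$ and to suitable functions built from its derivatives. First I would establish the lower bound $v(y,\tau)\geq g(y,\beta(\tau))$. The idea is to use $g(\cdot,\beta(\tau))$ as a (time-dependent) subsolution barrier. On the outer region $\{b y^2 \ge 20(d-1)\}$ the value $4\sqrt{d-1}$ is a constant and one checks it is a subsolution of \eqref{eqn:BVNLH} directly (the nonlinearity $-\frac{d-1}{v}$ is controlled since $v$ is bounded below by the a priori hypothesis $v\ge c(\tau)$, but more to the point the linear terms $-ay\partial_y v + a v$ evaluated on a constant give $av>0$, dominating $\frac{d-1}{v}$ when $v^2 > (d-1)/a$, which holds since $16(d-1) > (d-1)/a$ for $a\le 1$). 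On the inner region $\{b y^2 < 20(d-1)\}$ the barrier $\frac{9}{10}\sqrt{2(d-1)}$ is again constant; the main point is that near the "equator" the true solution stays close to $V_{ab}=(\frac{2(d-1)+by^2}{a+1/2})^{1/2}\ge (\frac{2(d-1)}{3/2})^{1/2}$, and the fluctuation $\phi$ is small because $|M(\tau)|\le\beta^{-1/4}(\tau)$ forces $\|\phi\|_{3,0}\lesssim\beta^{1/4}(\tau)\langle y\rangle^3$, which together with $A,B$ bounds controls the gap uniformly on compact-in-$y$-after-rescaling sets. To make this into a clean application of Lemma \ref{LM:max}, I would set $u := g(y,\beta(\tau)) - v(y,\tau)$, write down the linear parabolic inequality it satisfies (the coefficients $a_0 = \frac{1}{1+(\partial_y v)^2}\ge 0$, $a_1+m(\tau)y = -ay$, $a_2$ coming from the mean-value form of $-\frac{d-1}{v}+\frac{d-1}{g}$, which is $\ge$ or $\le 0$ as needed after the sign bookkeeping), verify the initial condition $u(y,0)\le 0$ from the hypothesis $v(y,0)\ge g(y,b_0)$ and $\beta(0)=b_0$, and verify the boundary condition at the interface $|y| = c(\tau) := \sqrt{20(d-1)/\beta(\tau)}$ where $g$ jumps — here I would take the smaller (lower) value so that $u\le 0$ is easiest, or smooth the jump slightly. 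Then Lemma \ref{LM:max} gives $u\le 0$, i.e. the lower bound.

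Next, for the derivative estimates I would differentiate \eqref{eqn:BVNLH} once, twice, etc., in $y$. Let $p := \partial_y v$. Differentiating, $p$ satisfies a linear parabolic equation whose zeroth-order coefficient picks up a good sign from the $\frac{d-1}{v^2}$ term (using the just-proven lower bound on $v$) plus terms involving higher derivatives of $v$; the quantity I actually want to bound is $p v^{-1/2}$ or equivalently I would run the maximum principle on $P_\pm := \pm p - \kappa_0\beta^{1/2}(\tau)v^{1/2}$, checking that $\kappa_0\beta^{1/2}v^{1/2}$ is a supersolution for $\beta$ small. The initial data hypothesis $|\partial_y v(y,0)v^{-1/2}(y,0)|\le\kappa_0\beta^{1/2}(0)$ gives $P_\pm(y,0)\le 0$, and there is no spatial boundary (use the second, global, alternative in Lemma \ref{LM:max}), so $P_\pm\le 0$ follows. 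The bounds $|\partial_y^n v|\lesssim\beta^{n/2}(\tau)$ for $n=2,3,4$ are obtained inductively the same way: $\partial_y^n v$ satisfies a linear parabolic equation with a dissipative zeroth-order term ($\sim \frac{d-1}{v^2}n$ after differentiating $-\frac{d-1}{v}$, again needing the lower bound on $v$) and forcing terms that are products of lower-order derivatives already controlled by the inductive hypothesis and by $|M(\tau)|, A(\tau), B(\tau)\le\beta^{-1/4}(\tau)$; comparing against the supersolution $\kappa_0'\beta^{n/2}(\tau)$ and using the initial bound $|\partial_y^n v(y,0)|\le\kappa_0\beta^{n/2}(0)$ closes the induction. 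Here one uses crucially that $\beta(\tau)$ is decreasing, so $\beta^{n/2}(\tau)$ is itself a supersolution of a dissipative equation once the dissipation rate beats $-\frac{d}{d\tau}\log\beta^{n/2} = \frac{n}{2}\cdot\frac{\beta(\tau)}{d-1}$, which is $O(\beta)$ and hence small.

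The main obstacle, I expect, is the inner-region part of the lower bound $v\ge g$: unlike the outer constant $4\sqrt{d-1}$ which has a comfortable margin, the inner constant $\frac{9}{10}\sqrt{2(d-1)}$ must be beaten by a solution that is only known to be close to $V_{ab}$ up to a fluctuation controlled by $|M|\le\beta^{-1/4}$, which is \emph{not} small in absolute terms over growing $y$ — so one cannot simply say "$v\approx V_{ab}\ge\sqrt{2(d-1)}/\sqrt{3/2} > \frac{9}{10}\sqrt{2(d-1)}$" pointwise. The resolution is that on the inner region $b y^2 < 20(d-1)$ one has $\langle y\rangle^3 \lesssim \beta^{-3/2}(\tau)$, so $\|\phi\|_\infty$ on this region is $\lesssim M_{3,0}(\tau)\beta^2(\tau)\cdot\beta^{-3/2}(\tau) = M_{3,0}(\tau)\beta^{1/2}(\tau) \lesssim \beta^{1/4}(\tau)$ — genuinely small — and this is exactly why the weighted majorants $M_{m,n}$ with the $\beta^{(m+n+1)/2}$ normalization were set up the way they were. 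One must also handle the interface between the two regions carefully, since $g$ is discontinuous there; I would either smooth $g$ from below on a thin shell (which only helps, as it lowers the barrier) and absorb the resulting lower-order error, or argue directly that $v$ exceeds the \emph{minimum} of the two values in an overlap neighborhood. A secondary technical annoyance is that Lemma \ref{LM:max} requires smoothness of the comparison function and $\langle y\rangle^{-1}u\in L^\infty$; smoothness of $v$ for $\tau>0$ comes from parabolic regularity (Theorem \ref{THM:WellPose} and the remark that weak solutions are classical for $t>0$), and the growth condition holds because $v\in\langle y\rangle L^\infty$ is part of the standing hypothesis \eqref{eq:space}.
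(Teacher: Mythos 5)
Your treatment of the lower bound $v \ge g(y,\beta(\tau))$ is essentially correct and matches the paper's structure: apply Lemma \ref{LM:max} on the outer region $\beta y^2 \ge 20(d-1)$ where the constant $4\sqrt{d-1}$ is a subsolution (with room to spare since $V_{ab}$ is much larger there), and on the inner region use the decomposition $v = V_{ab} + \phi$ together with the observation that $\langle y\rangle^3 \lesssim \beta^{-3/2}$ there, so the $M_{3,0} \le \beta^{-1/4}$ hypothesis forces $|\phi| \lesssim \beta^{1/4}$, genuinely small. You also correctly identify that this inner-region smallness is the entire point of the weighted $M$-majorants. Your verification of the boundary condition at $\beta y^2 = 20(d-1)$ via these direct estimates is also the right move.

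The gap is in the derivative estimate for $n=1$ (and $n=2$). You propose to run the maximum principle \emph{globally}, with no spatial boundary, comparing $\pm \partial_y v$ against the supersolution $\kappa_0\beta^{1/2}v^{1/2}$ (equivalently, comparing $\chi := \partial_y(v^{1/2}) = \tfrac12 v^{-1/2}\partial_y v$ against the constants $\pm\kappa_0\beta^{1/2}$). But the supersolution verification fails on the inner region. In the operator $\mathcal{K}$ that $\chi$ satisfies, the linear zeroth-order coefficient is $\tfrac{a}{2} - \tfrac{3(d-1)}{2v^2}$, and $\mathcal{K}(\kappa_0\beta^{1/2}) > 0$ requires $v^2 > \tfrac{3(d-1)}{a}$, i.e. $v \gtrsim 2.5\sqrt{d-1}$ after using $a$ close to $\tfrac12$. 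On $\{\beta y^2 < 20(d-1)\}$ the only lower bound available is $v \ge \tfrac{9}{10}\sqrt{2(d-1)} \approx 1.27\sqrt{d-1}$, which is not enough, so the constant $\kappa_0\beta^{1/2}$ is not a supersolution there and the ``global alternative'' in Lemma \ref{LM:max} cannot be invoked. (Your remark that the zeroth-order coefficient ``picks up a good sign from the $\tfrac{d-1}{v^2}$ term'' is also backwards: differentiating $-\tfrac{d-1}{v}$ contributes $+\tfrac{d-1}{v^2}\partial_y v$ to the equation for $\partial_y v$, which is an amplifying, not dissipative, sign, and is precisely why the paper passes to $v^{-1/2}\partial_y v$.) The paper's fix is to apply the maximum principle only on the outer region $\beta y^2 \ge 20(d-1)$ — where the already-proven lower bound $v \ge 4\sqrt{d-1}$ makes the coefficient sign work — and to supply the boundary values at $\beta y^2 = 20(d-1)$ and the inner-region estimate from the $M_{2,1}\le\beta^{-1/4}$ hypothesis directly, just as for the lower bound. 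You should mirror that two-region structure here. For $n=3,4$ the linear zeroth-order coefficient becomes $-(n-1)a + \tfrac{d-1}{v^2}$, which is negative with margin even on the inner region, so there your global application is fine; but $n=1,2$ genuinely require the region split.
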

\begin{proof}
We start with proving the first estimate in (~\ref{eq:comparison})
by verifying that the equation for $v$ satisfies all the conditions
in Lemma ~\ref{LM:max}. Since $M_{3,0}(\tau),\ A(\tau),\ B(\tau)\leq
\beta^{-\frac{1}{4}}(\tau)$ and
$v(y,\tau)=\lb\frac{2(d-1)+b(\tau)y^{2}}{a(\tau)+\frac{1}{2}}\rb^{\frac{1}{2}}+\phi(y,\tau)$
we have
\begin{equation}
v(y_{1},\tau)\geq g(y_{1},\tau)\label{eq:comparison0}
\end{equation} for $y_{1}$ satisfying $|\beta y^{2}_{1}|\leq 20(d-1)$ and $\tau\in
[0,\tau_{1}].$

On the other hand we observe that
$a=\frac{1}{2}-\beta+O(\beta^{\frac{3}{2}})$ by the assumption on
$A(\tau)$. By a direct computation we have that on the domain $
[0,\tau_{1}]\times\{|\beta y^{2}|\geq 20(d-1)\}$
\begin{equation}\label{eq:function}
H(g)\leq 0\ \text{and}\ H(v)=0
\end{equation} where the map $H(g)$
is defined as
$$H(g)=g_{\tau}-\frac{g_{yy}}{1+g_{y}^{2}}+\frac{d-1}{g}+ay\partial_{y}g-ag.$$

In order to use Lemma ~\ref{LM:max} derive an equation for $g-v$. By
the forms of $H(g)$ and $H(v)$ there exist functions $b_{n}, \
n=1,2,3,$ such that
\begin{equation}\label{eq:BeforeMaxPrinciple}
\partial_{\tau}(g-v)-b_{1}\partial_{y}^{2}(g-v)+ay\partial_{y}(g-v)-b_{2}\partial_{y}(g-v)-b_{3}(g-v)=H(g)-H(v)
\end{equation} where $b_{1}>0$ and $b_{n}, \ n=1,2,3,$ are bounded functions.

Equations (~\ref{eq:comparison0})-(~\ref{eq:BeforeMaxPrinciple}),
the condition (~\ref{eq:space}) and the assumption $v(y,0)\geq
g(y,b_{0})$ enable us to use Lemma ~\ref{LM:max} on the equation for
$g-v$. This leads to the inequality
\begin{equation}\label{eq:lower2}
v(y,\tau)\geq g(y,\beta(\tau))\ \text{if}\ \beta y^{2}\geq 20(d-1).
\end{equation}

For the region $\beta y^{2}\leq 20(d-1)$ we use the estimate
(~\ref{eq:comparison0}), which together with (~\ref{eq:lower2})
yields the first estimate in (~\ref{eq:comparison}).

Now we prove the estimate on $\partial_{y}v.$ Differentiating
Equation (~\ref{eqn:BVNLH}) we obtain an equation for
$\partial_{y}v$. However this equation is not accessible directly to
a maximum principle. To overcome this problem we use the fact that
$v$ is large for large $|y|$ which is proved above and use instead
the equation for $v^{- \frac{1}{2} }\partial_{y}v$. We define a new
function $h(y,\tau):=v^{ \frac{1}{2} }(y,\tau).$ Then
$\partial_{y}h=\frac{1}{2}v^{-\frac{1}{2} }{\partial_{y}v}$
satisfies the equation
$$\mathcal{K}(\partial_{y}h)=0,$$ where the map $\mathcal{K}(\chi)$
is defined as $$
\begin{array}{lll}
\mathcal{K}(\chi)&:=&\frac{d}{d\tau}\chi-\frac{1}{1+(\partial_{y}v)^{2}}\partial_{y}^{2}\chi
+[\frac{v^{-1}
(\partial_{y}v)^{2}+2\partial_{y}^{2}v}{1+(\partial_{y}v)^{2}}-\frac{1}{v}]\frac{\partial_{y}v}{1+(\partial_{y}v)^{2}}
\partial_{y}\chi+ay\partial_{y}\chi+\frac{a}{2}\chi\\
&
&-\frac{3(d-1)}{2v^{2}}\chi+\frac{1}{v(1+(\partial_{y}v)^{2})}\chi^{3}
+\frac{8}{(1+(\partial_{y}v)^{2})^{2}}\chi^{5}.
\end{array}
$$ On the other hand since $\frac{a}{2}>\frac{1}{5}$, implied by the assumption on $A$,
and $h^{2}=v\geq 4\sqrt{d-1}$ on the region $\beta y^{2}\geq
20(d-1)$, we have that $$\mathcal{K}(\kappa_{0}\beta^{ \frac{1}{2}
})>0\ \text{and}\ \mathcal{K}(-\kappa_{0}\beta^{ \frac{1}{2} })<0$$
provided that $b(0)>0$, (and therefore $\beta(\tau)\leq b(0)$), is
sufficiently small. Recall that the constant $\kappa_{0}>2$ defined
in Theorem ~\ref{maintheorem}. Moreover, by the assumption
$M_{2,1}\leq \beta^{- \frac{1}{4} }$ we have
$$-\kappa_{0}\beta^{\frac{1}{2}}(\tau)<\partial_{y}h(y,\tau)|_{\beta(\tau)y^{2}=
20(d-1)}<\kappa_{0}\beta^{\frac{1}{2}}(\tau).$$ By the condition on
$v(y,0)$ we have that
$$-\kappa_{0}\beta^{\frac{1}{2}}(0)<\partial_{y}h(y,0)|_{\beta(0)y^{2}\geq
20(d-1)}<\kappa_{0}\beta^{\frac{1}{2}}(0).$$

Lastly we derive equations for $\partial_{y}h\pm
\kappa_{0}\beta^{1/2}$ from
$\mathcal{K}(\partial_{y}h)-\mathcal{K}(\mp \kappa_{0}\beta^{1/2})$
whose proof is almost identical to (~\ref{eq:BeforeMaxPrinciple}),
thus omitted.

Collecting the facts above and using that $h=\sqrt{v}$, we have by
Lemma ~\ref{LM:max} the second part of (~\ref{eq:comparison}).

By almost the same reasoning on the equation for $\partial_{y}^{2}v$
we prove that $|\partial_{y}^{2}v|\lesssim \beta$.

Next, we present the proof of the estimate on $\partial_{y}^{3}v$.
The estimates for $\partial_{y}^{4}v$ is easier, thus omitted. We
compute to get $$W(\partial_{y}^{3}v)=g_{1}$$ where the map $W(h)$
is defined as
$$W(h):={\partial_\tau}h-\frac{1}{1+v_{y}^{2}}\partial_{y}^{2}h-g_{4}\partial_{y}h+ay\partial_{y}h+2ah-\frac{d-1}{v^{2}}h+\frac{6\partial_{y}v}{[1+(v_{y})^{2}]^{2}}h^{2}-g_{3}h,$$
where $g_{4}$ is a function of $\partial_{y}^{n}v,\ n=1,2,3$, and
$g_{1}, g_{3}$ are functions of $v^{- \frac{1}{2} }\partial_{y}v$
and $\partial_{y}^{2}v.$ Moreover $$g_{4}\in L^{\infty},\
|g_{3}|\lesssim \beta^{ \frac{1}{2} },\
g_{1}=g_{1}(\frac{\partial_{y}v}{\sqrt{v}},\partial_{y}^{2}v)\
\text{satisfies} \ \|g_{1}\|_{\infty}\leq \kappa\beta^{2/3}$$ for
some constant $\kappa>0$ by the facts $\partial_{y}^{n}v\in
L^{\infty}, n=1,2,3,$ and their various estimates above. Recall that
$\|\partial_{y}^{3}v(\cdot,0)\|_{\infty}\leq
\kappa_{0}\beta^{3/2}(0)$ for some $\kappa_{0}>0$. We define a new
constant $\kappa_{1}:=\max\{\kappa_{0},\kappa\}.$ By the assumption
on $A$ we have $-2a-\frac{d-1}{v^{2}}+g_{3}\leq - \frac{1}{2} $
hence
$$W(2\kappa_{1}\beta^{3/2})>0\ \text{and}\
W(-2\kappa_{1}\beta^{3/2})<0.$$ As in (~\ref{eq:BeforeMaxPrinciple})
we derive equations for $\partial_{y}^{3}v\mp
2\kappa_{1}\beta^{3/2}$ from $W(\partial_{y}^{3}v)-W(\pm
2\kappa_{1}\beta^{3/2})$, on which we use the maximum principle to
have the estimate for $\partial_{y}^{3}v.$

The proof is complete.
\end{proof}
The following proposition is used in the proof of the statement (4)
of Theorem ~\ref{maintheorem}. We define a function $\varrho$ as
\begin{equation}\label{eq:defineRho}
\varrho(y,\tau):=\frac{v\partial_{y}^{2}v}{1+(\partial_{y}v)^{2}}
=\frac{u(x,t)\partial_{x}^{2}u(x,t)}{1+(\partial_{x}u)^{2}}
\end{equation} where
the last equality follows from the definition of $v.$
\begin{proposition}\label{Prop:Comparison}
Suppose that $v$ satisfies all the conditions in Proposition
~\ref{PROP:UppLow}. The we have
\begin{equation}\label{eq:Rate1}
|\varrho(y,\tau)|\leq 4\beta(\tau)\ \text{for}\ y\in
[-\frac{1}{10},\frac{1}{10}]
\end{equation} and if $v_{0}\partial_{y}^{2}v_{0}\geq -1$ then
\begin{equation}\label{eq:Rate2}
\varrho(y,\tau)\geq -1\ \text{for}\ \beta y^{2}\geq 2(d-1).
\end{equation}
If $\rho(\cdot,0)\leq d-1$ then
\begin{equation}\label{eq:meancurvature}
\rho(\cdot,\tau)\leq d-1.
\end{equation}
\end{proposition}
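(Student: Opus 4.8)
The plan is to prove each of the three estimates \eqref{eq:Rate1}, \eqref{eq:Rate2}, \eqref{eq:meancurvature} by deriving a parabolic equation for $\varrho$ and applying the maximum principle of Lemma~\ref{LM:max}. The starting point is the observation that $\varrho = v\partial_y^2 v/(1+(\partial_y v)^2)$ is, up to the lower-order curvature term $-(d-1)/v$, exactly $v\cdot(\text{the parabolic part of }\eqref{eqn:BVNLH})$; more precisely, from \eqref{eqn:BVNLH} one has $\partial_\tau v = \varrho - (d-1)/v + (\text{first-order drift terms})$. Differentiating \eqref{eqn:BVNLH} twice in $y$ and assembling, I would compute that $\varrho$ satisfies an equation of the schematic form
\begin{equation*}
\partial_\tau \varrho = \frac{1}{1+(\partial_y v)^2}\partial_y^2\varrho + \mathbf{b}\cdot\partial_y\varrho + c(y,\tau)\,\varrho + (\text{inhomogeneous terms}),
\end{equation*}
where the coefficients $\mathbf{b}$ and $c$ depend on $v, \partial_y v, \partial_y^2 v$, and the inhomogeneity is controlled using the bounds from Proposition~\ref{PROP:UppLow}, namely $v \geq g(y,\beta)$, $|v^{-1/2}\partial_y v|\lesssim \beta^{1/2}$, and $|\partial_y^n v|\lesssim \beta^{n/2}$ for $n=2,3,4$. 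The key algebraic point is that the ``dangerous'' zeroth-order coefficient in front of $\varrho$ comes out with a favorable sign (roughly $c \approx -2a + O(\beta)$, coming from the scaling term $a$ in \eqref{eqn:BVNLH} appearing twice upon differentiation), so that barrier functions proportional to $\beta(\tau)$ are super/subsolutions.

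For \eqref{eq:Rate1}, I would work on the bounded region $y\in[-\tfrac{1}{10},\tfrac{1}{10}]$. Here $v$ is comparable to $\sqrt{2(d-1)}$ and all derivative bounds are of size $O(\beta^{n/2})$, so $|\varrho|\lesssim \beta$ already follows directly from the decomposition \eqref{eqn:split2} and the estimates $M_{1,2}(\tau), A(\tau), B(\tau)\leq \beta^{-1/4}$: indeed $\partial_y^2 V_{ab} = b/(a+\tfrac12)$ is $O(\beta)$ and $\partial_y^2\phi$ is controlled by $M_{1,2}\beta^2\langle y\rangle \lesssim \beta^{7/4}$, so on this small interval $|v\partial_y^2 v|\leq 4\beta$ after absorbing constants (one must check the numerical constant $4$ is attained, using $\tfrac{1}{2}\leq\varsigma_0\leq 2$ and smallness of $\delta$). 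This does not even need the maximum principle; it is a direct consequence of the a priori bounds, and I expect it to be the easy case.

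For \eqref{eq:Rate2} I would use the maximum principle with barrier $-1$ on the exterior region $\{\beta y^2 \geq 2(d-1)\}$. The boundary condition at $\beta y^2 = 2(d-1)$ follows from \eqref{eq:Rate1}-type reasoning (more precisely from an intermediate-region estimate, since $\beta y^2 = 2(d-1)$ corresponds to $y$ of order $\beta^{-1/2}$, not $y\in[-\tfrac{1}{10},\tfrac{1}{10}]$ — so here one does genuinely need the PDE argument giving $|\varrho|\lesssim\beta$ on $\{\beta y^2\leq 20(d-1)\}$ analogous to \eqref{eq:comparison0}), the initial condition $\varrho(y,0)\geq -1$ is hypothesized, and the sign of the zeroth-order coefficient ensures $-1$ is a subsolution. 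For \eqref{eq:meancurvature}, the mean curvature $\rho$ (note $\rho = \varrho - (d-1)$ in the notation, or rather the mean curvature expression $\partial_x^2 u/(1+(\partial_x u)^2) - (d-1)/u$ rescales to something proportional to $\varrho/v - (d-1)/v^2$) satisfies its own parabolic inequality; since the mean curvature flow satisfies $\partial_t H = \Delta_{M_t} H + |A|^2 H$ along the flow, in our variables $\rho$ obeys $\partial_\tau\rho = L\rho + (\text{nonneg coefficient})\rho + \ldots$, and one checks $d-1$ is a supersolution given $\rho(\cdot,0)\leq d-1$, so Lemma~\ref{LM:max} (second part, with the initial condition holding for all $y$) applies.

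The main obstacle I anticipate is the bookkeeping in deriving the $\varrho$-equation and verifying that every coefficient in the inhomogeneous and first-order terms is genuinely controlled by the Proposition~\ref{PROP:UppLow} bounds on the \emph{entire} line (not just on bounded regions), together with confirming the exact sign of the zeroth-order coefficient after all cancellations — the scaling-term contributions and the nonlinear curvature contributions must combine to beat any bad terms coming from $\partial_y^2 v$ factors. A secondary subtlety is that to invoke Lemma~\ref{LM:max} one needs $\langle y\rangle^{-1}\varrho\in L^\infty$ and smoothness, which must be extracted from the a priori regularity $\partial_y^n v\in L^\infty$ for $n\leq 4$ and the lower bound $v\geq g(y,\beta)>0$; this is routine but needs to be stated. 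I would also need to be careful that the boundary set $\{\beta y^2 = 2(d-1)\}$ for \eqref{eq:Rate2} is treated by the intermediate estimate rather than by \eqref{eq:Rate1} verbatim, since the two regions are different.
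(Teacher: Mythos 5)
Your plan is essentially the paper's: \eqref{eq:Rate1}, together with the boundary value $\varrho \geq -1$ on $\{\beta y^2 = 2(d-1)\}$, is read off directly from the decomposition \eqref{eqn:split2} and the a priori bounds of Proposition~\ref{PROP:UppLow} (no maximum principle), while \eqref{eq:Rate2} and \eqref{eq:meancurvature} come from Lemma~\ref{LM:max} applied to a parabolic equation for the curvature quantity. Two remarks on the execution. First, the paper's derivation is cleaner than what you sketch: rather than deriving a linear inhomogeneous equation for $\varrho$ in the rescaled variables and then fighting to control the inhomogeneity and the sign of the zeroth-order coefficient, the paper works in the \emph{original} $(x,t)$ variables and observes that $\chi := \varrho - (d-1) = u\partial_x^2 u/(1+(\partial_x u)^2) - (d-1)$ is annihilated by a quasilinear operator $\mathcal{K}_u$ for which the constants $0$ and $-d$ are also exact zeros (because $\mathcal{K}_u(h)$ contains $h$ only through $(h+d)\partial_x h$ and $(h+d)h$). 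Forming $\mathcal{K}_u(\chi)-\mathcal{K}_u(-d)$ (resp. $\mathcal{K}_u(\chi)-\mathcal{K}_u(0)$) yields a \emph{homogeneous} linear parabolic equation $\mathcal{Y}_u(\varpi)=0$ for $\varpi := \varrho+1$ (resp. for $\chi$) with bounded coefficients, so the obstacle you anticipate — verifying a favorable sign after cancellations — evaporates; Lemma~\ref{LM:max} needs only boundedness of the zeroth-order coefficient, thanks to the $e^{\kappa\tau}$ conjugation in its proof. Second, your observation that the boundary datum for \eqref{eq:Rate2} lives at $|y|\sim\beta^{-1/2}$ and is therefore not literally the statement \eqref{eq:Rate1} is a genuine point that the paper addresses (tersely) by noting that the same direct estimate from \eqref{eqn:split2} also gives $\varrho\geq -1$ on $\{\beta y^2 = 2(d-1)\}$; and your remark that bounding $\varrho$ requires control of $\partial_y^2 v$ — supplied by $M_{1,2}$ or by the a priori bound $|\partial_y^2 v|\lesssim\beta$ from Proposition~\ref{PROP:UppLow}, not merely by $M_{3,0}$ — is a fair correction of the paper's brief wording.
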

\begin{proof}
Recall that $v(y,\tau)=V_{a,b}+\phi(y,\tau)$ with
$|\phi(y,\tau)|\leq \beta^{\frac{7}{4}}\langle y\rangle^{3}$ by the
assumption on $M_{3,0}\leq \beta^{-\frac{1}{4}}$. This implies
(~\ref{eq:Rate1}) and that $\varrho(y,\tau)\geq -1 $ when $\beta
y^{2}=2(d-1).$

We use the maximum principle to prove (~\ref{eq:Rate2}). First we
derive an inequality for the function
$\varpi:=\frac{\partial_{x}^{2}uu}{1+(\partial_{x}u)^{2}}+1.$ We
show below that
\begin{equation}\label{comparison}
\mathcal{Y}_{u}(\varpi)=0
\end{equation} where the
linear mapping $\mathcal{Y}_{u}$ is defined as
$$\mathcal{Y}_{u}(\psi)=\partial_{t}\psi-\frac{1}{1+(\partial_{x}u)^{2}}\partial_{x}^{2}\psi
+\frac{2\partial_{x}u}{u[1+(\partial_{x}u)^{2}]}[\frac{\partial_{x}^{2}u u}{1+(\partial_{x}u)^{2}}+1]
\partial_{x}\psi-\frac{2(\partial_{x}u)^{2}}{u^{2}(1+(\partial_{x}u)^{2})}\chi\psi$$
where the function $\chi$ is defined as
$\chi(x,t):=\frac{\partial_{x}^{2}uu}{1+(\partial_{x}u)^{2}}-(d-1).$
Recall that $\varpi(x,t)=\varrho(y,\tau)+1\geq 0$ if $\beta(\tau(t))
y^{2}= 2(d-1)$ or $t=0.$ Using the maximum principle Lemma
~\ref{LM:max} on (~\ref{comparison}) we have that $\varpi\geq 0$ for
$\beta y^{2}\geq 2(d-1)$, which is (~\ref{eq:Rate2}).

Now Equation (~\ref{comparison}) follows from considering
$\mathcal{K}_{u}(\chi(\cdot,t))-\mathcal{K}_{u}(-d)$, as in
(~\ref{eq:BeforeMaxPrinciple}), and the observations that
$$\mathcal{K}_{u}(\chi(\cdot,t))=0\ \text{and}\ \mathcal{K}_{u}(-d)= 0$$ where the map $\mathcal{K}_{u}$
is defined as
$$\mathcal{K}_{u}(h):=\partial_{t}h-\frac{1}{1+(\partial_{x}u)^{2}}\partial_{x}^{2}h
+\frac{2\partial_{x}u}{u(1+(\partial_{x}u)^{2})}(h+d)\partial_{x}h
-\frac{2(\partial_{x}u)^{2}}{u^{2}(1+(\partial_{x}u)^{2})}(h+d)h.$$

The proof of (~\ref{eq:meancurvature}) is similar by using the
observations $$\mathcal{K}_{u}(\chi(\cdot,t))=0\ \text{and}\
\mathcal{K}_{u}(0)= 0$$ and the initial condition
$\chi(\cdot,0)=\rho(\cdot,0)\leq 0.$

This completes the proof of Proposition ~\ref{Prop:Comparison}.
\end{proof}
\section{Proof of Main Theorem ~\ref{maintheorem}}\label{SecMain}

Choose $b_0$ so that $Cb_{0}^2\leq \frac{1}{2}\epsilon_{0}$ with $C$
the same as in \eqref{eq:INI2} and with $\epsilon_{0}$ given in
Proposition ~\ref{Prop:Splitting}. Let
$v_{0}(y):=\lambda_{0}^{-1}u_{0}(\lambda_{0}y).$ Then $v_{0}\in
U_{\frac{1}{2}\epsilon_{0}}$, by the condition \eqref{eq:INI2} on
the initial conditions with $(m,n)=(3,0)$. Hence Proposition
\ref{Prop:Splitting}  holds for $v_{0}$ and we have the splitting
$v_{0}=V_{g(v_{0})}+\eta_{0}$. Denote $g(v_0) =: (a(0), b(0))$.


Furthermore, by Lemma \ref{induction} there are $\delta_1 > 0$ and
$\lambda_1 (t)$, admissible on $[0, \delta_1]$, s.t. $\lambda_1 (0)=
\lambda_0$ and Equations \eqref{eqn:splitting2} and
\eqref{eqn:splitting2conditions} hold on the interval $[0,
\delta_{1}]$. Hence, in particular, the estimating functions
$M(\tau)=(M_{m,n}(\tau))$, $(m,n)=(3,0),\ (\frac{11}{10},0),\
(2,1),\ (1,2)$,\ $A(\tau)$ and $B(\tau)$ of Section 5 are defined on
the interval $[0, \delta_1]$. We will write these functions in the
original time $t$, i.e. we will write $M(t)$ for $M(\tau (t))$ where
$\tau(t)=\int_{0}^{t}\lambda_{1}^{-2}(s)ds$.

Recall the definitions of $\beta(\tau)$ and $\kappa$ are given in
(~\ref{FunBTau}). By the relation $\beta(0)=b(0)$, Equation
(~\ref{eq:INI2}) and Proposition ~\ref{Prop:SplittingIC},   $A(0)$,
$|M(0)|\lesssim 1$,
while $B(0)=0$ and $v_{0}(y)\geq g(y,b_{0})$, by the definition. We
have, by the continuity,
that for a sufficiently small time interval, which we can take to be
$[0, \delta_1]$, Condition (A) in Section ~\ref{SEC:ApriEst} holds
and
\begin{equation}\label{ApriorEST}
|M(t)|,\ A(t), \ B(t)\leq \beta^{-\frac{1}{4}}(\tau(t)),\
\text{and}\ u_{\lambda_{1}}(\cdot,t)\geq
 \frac{1}{4}\sqrt{2(d-1)},
\end{equation} the last fact together with Theorem
~\ref{THM:WellPose} and the initial conditions implies that
$$v(\cdot,\tau)\in \langle y\rangle L^{\infty},\
\partial_{y}v(\cdot,\tau),\ \partial_{y}^{2}v(\cdot,\tau)\in
L^{\infty}.$$ Then by Proposition ~\ref{Pro:Main2}, Corollary
\ref{cor:aprior} we have that for the same time interval
\begin{equation}\label{UtimateEST}
|M(t)|,\ A(t),\ B(t)\lesssim 1,\ \text{and}\
u_{\lambda_{1}}(\cdot,t)\geq g(y,\beta(\tau)).
\end{equation}

Equation \eqref{UtimateEST} implies that $u_{\lambda_1} (\cdot,
\delta_1) \in U_{\epsilon_0/2}$ and
$u_{\lambda_{1}}(\cdot,\delta_{1})\geq g(y,\beta(\tau(t)))$ (indeed,
by the definition of $M_{3,0}(t)$ we have $\|u_{\lambda_{1}}
(\cdot,t)-V_{a(t),b(t)}\|_{3,0}\leq M_{3,0}(t) b^{2}(t)$). Now we
can apply Lemma \ref{induction} again and find $\delta_2 > 0$ and
$\lambda_2 (t)$, admissible on $[0, \delta_1 +\delta_2]$, s.t.
$\lambda_2 (t)= \lambda_1 (t)$ for $t \in [0, \delta_1]$ and
Equations \eqref{eqn:splitting2} and
\eqref{eqn:splitting2conditions} hold on the interval $[0, \delta_1
+\delta_2]$.

We iterate the procedure above to show that there is a maximal time
$t^* \le t_*$  (the maximal existence time), and a function
$\lambda(t)$, admissible on $[0, t^*)$, s.t. \eqref{eqn:splitting2}
and \eqref{eqn:splitting2conditions} and \eqref{UtimateEST} hold on
$[0, t^*)$. We claim that $t^* = t_*$ and $t^* < \infty$ and,
consequently, $\lambda(t^*) = 0$. Indeed, if $t^* < t_*$ and
$\lambda(t^*) > 0$, then by the a priori estimate $u_{\lambda} (t)
\in U_{\epsilon_0/2}$ and $u(x,t)\geq
\frac{1}{4}\lambda(t)\sqrt{2(d-1)}$ for any $t \le t^*.$ By Lemma
\ref{induction}, this implies that there is $\delta
>0$ and $\lambda_{\#}(t)$, admissible on $[0, t^*+\delta]$, s.t.
\eqref{eqn:splitting2} and \eqref{eqn:splitting2conditions} hold on
$[0, t^*+\delta]$ and $\lambda_{\#}(t) = \lambda(t)$ on $[0, t^*)$,
which would contradict the assumption that the time $t^*$ is
maximal. Hence
\begin{equation}\label{eq:TwoCases}
\text{either}\ t^* = t_*\ \text{or}\ t^* < t_*\ \text{and}\
\lambda(t^*) = 0.
\end{equation}
The second case is ruled out as follows. Using the relation between
the functions $u(x,t)$ and $v(y,\tau)$ and Equation
(~\ref{UtimateEST}) we obtain the following a priory estimate on the
(non-rescaled) solution $u(x,t)$ of equation (~\ref{eq:MCF}):
\begin{equation}\label{upperbound}
u(x,t) \geq \lambda(t)g(y,\beta(\tau(t)))\geq
\lambda(t)\frac{1}{4}\sqrt{2(d-1)}.
\end{equation}
Moreover by \eqref{eqn:split2} and the fact $\|\langle
y\rangle^{-3}\phi(y,\tau(t))\|_{\infty}\lesssim b^{2}(t)$ implied by
$M_{1}\lesssim 1$,
\begin{equation}
u(0,t)\le \lambda(t)\lsb \lb\frac{2(d-1)}{ c(t)}\rb^\frac{1}{2}+C
b(t)^2 \rsb\rightarrow 0,
 \label{eqn:sqeefdsfew}
\end{equation}
as $t\uparrow t^*$, which implies that $t^* \ge t_*$ and therefore
$t_* = t^*$ is the collapsing time as claimed.

Now we consider the first case in (~\ref{eq:TwoCases}). In this case
we must have either $t^* = t_* = \infty$ or $t^* = t_* < \infty$ and
$\lambda(t^*) = 0$, since otherwise we would have existence of the
solution on an interval greater than $[0, t_*)$. Finally, the case
$t^* = t_* = \infty$ is ruled out in the next paragraph.
This proves the claim which can reformulated as: there is a function
$\lambda(t)$, admissible on $[0, t_*)$, $t_*<\infty$ s.t.
\eqref{eqn:splitting2} and \eqref{eqn:splitting2conditions} and
\eqref{UtimateEST} hold on $[0, t_*)$ and $\lambda(t) \rightarrow 0$
as $t \rightarrow t_{*}$.


By the definitions of $A(t)$ and $B(t)$ in (~\ref{eq:majorants}) and
the facts that $A(t), B(t)\lesssim 1 $ proved above, we have that
\begin{equation}\label{EstABtau}
a(t)-\frac{1}{2}=- \frac{1}{d-1}b(t)+O(\beta^{2}(\tau(t))),\
b(t)=\beta(\tau(t))(1+O( \beta^{ \frac{1}{2} }(\tau(t)))),
\end{equation}
where, recall, $\tau = \tau(t)=\int_{0}^{t}\lambda^{-2}(s)ds$. Hence
$a(t)-\frac{1}{2}=O(\beta(\tau))$.  Recall that
$a=-\lambda\partial_{t}\lambda$, which can be rewritten as
$\lambda^{2}(t)=\lambda_{0}^{2}-2 \int_{0}^{t}a(s)ds$ or
$\lambda(t)=[\lambda_{0}^{2}-2 \int_{0}^{t}a(s)ds]^{1/2}.$
Since $|a(t)-\frac{1}{2}|=O(b(t))$, there exists a time $t^{*} \le
t^{**}< \infty$ such that $\lambda_{0}^{2}=2
\int_{0}^{t^{**}}a(s)ds$, i.e. $\lambda(t)\rightarrow 0$ as
$t\rightarrow t^{**}$. Furthermore, by the definition of $\tau$ and
the estimate $|a(t)-\frac{1}{2}|=O(b(t))$ we have that
$\tau(t)\rightarrow \infty$ as $t\rightarrow t^{**}$ (precise
expressions are given in the next paragraph). Since $\lambda(t^*) =
0$ we must have $t^{*} = t^{**}$. Thus we have shown that
$t^*<\infty$.

This completes the proof of Statement (1) and (2) of Theorem
~\ref{maintheorem}.

Now we prove Statement (3) of Theorem ~\ref{maintheorem} which
establishes the asymptotics of the parameter functions. Equation
(~\ref{EstABtau}) implies $b(t)\rightarrow 0$ and $a(t)\rightarrow
\frac{1}{2}$ as $t\rightarrow t^*$.  By the analysis above and the
definitions of $a$, $\tau$ and $\beta$ (see \eqref{FunBTau}) we have
$$\lambda(t)=(t^*-t)^{1/2}(1+o(1)),\ \tau(t)=-ln|t^*-t|(1+o(1)),$$
and $$\beta(\tau(t))=-\frac{1}{(d-1)ln|t^*-t|}(1+o(1)).$$ This gives
the first equation in \eqref{eq:para}. By (~\ref{EstABtau}) and the
relation $c=a + \frac{1}{2}$ we have the last two equations in
\eqref{eq:para}. This proves Statement (3) of Theorem
~\ref{maintheorem}.

Now we prove the fourth statement of Theorem ~\ref{maintheorem}. First we show for $x\not=0$ $\displaystyle\underline\lim_{t\rightarrow t_{*}}u(x,t)\geq 0$. We
transform (~\ref{eq:MCF}) as
\begin{equation}\label{eq:transform}
\partial_{t}u=[\frac{u
\partial_{x}^{2}u}{1+(\partial_{x}u)^{2}}-(d-1)]\frac{1}{u}.
\end{equation} By
the estimate (~\ref{eq:Rate1}) and the definition
$\varrho(y,\tau):=\frac{u
\partial_{x}^{2}u}{1+(\partial_{x}u)^{2}}$ we have that
$$ \partial_{t}u(0,t)\leq [4\beta(0)-(d-1)]\frac{1}{u(0,t)},
$$
or
$$u^{2}(0,t)\leq u^{2}(0,t_{1})-2[(d-1)-4\beta(0)](t-t_{1}).$$
This together with the fact $u(0,t_{*})=0$ yields
\begin{equation}\label{eq:0first}
t_{*}-t_{1}\leq \frac{u^{2}(0,t_{1})}{2[(d-1)-4\beta(0)]}.
\end{equation}
On the other hand if a fixed $x_{1}\not=0$ satisfies
$\lambda^{-2}(t)\beta(\tau(t))x_{1}^{2}=\beta(\tau)y_{1}^{2}\geq
20(d-1)$ for $t=t_{1}$, and therefore for all $t\geq t_{1}$ then by
(~\ref{eq:defineRho})(~\ref{eq:Rate2}) and (~\ref{eq:transform}) we have
$$
\partial_{t}u(x_{1},t)\geq -\frac{d}{u(x_{1},t)}, $$ or
\begin{equation}
u^{2}(x_{1},t)\geq u^{2}(x_{1},t_{1})-2d(t-t_{1}).
\end{equation} Now we compare $u(x_{1},t_{1})$ and
$u(0,t_{1})$ to see that $u(0,t)$ goes to zero first. Recall that
$u(x,t)=\lambda(t) v(y,\tau)$ and $v(y,\tau)$ has the lower bound
$g(y,b(\tau))$ defined in (~\ref{eq:Lower}), moreover the estimate
$M_{3,0}\lesssim 1$ implies $u(0,t)=\lambda(t) v(0,\tau(t))\leq
\lambda(t) 2\sqrt{d-1}$, thus we have
\begin{equation}\label{eq:compare3}
u(x_{1},t_{1})\geq 2 u(0,t_{1}).
\end{equation} Equations (~\ref{eq:0first})-(~\ref{eq:compare3}) and the fact
$d\geq 2$ yield for any $t<t_{*}$
$$u^{2}(x_{1},t)\geq \frac{3d-4-16\beta(0)}{d-1-4\beta(0)}u^{2}(0,t_{1})\geq u^{2}(0,t_{1})>0$$ i.e. there exists a constant $u_*(x_{1})>0$ such that
$u(x_{1},t)\geq u_*(x_{1})$ before the collapsing time, i.e. $t<
t^*.$

Moreover, by the fast decay of $\lambda(t)$ and slow decay of
$\beta(\tau(t))$ we have that for any $x_{1}\not=0,$ there exists a
time $t_{1}$ such that
$\lambda^{-2}(t)\beta(\tau(t))x_{1}^{2}=\beta(\tau)y_{1}^{2}\geq
20(d-1)$ for $t\geq t_{1}$. This implies that there exists a
$u_*(x_{1})$ such that \begin{equation}\label{eq:nonzero}
u(x_{1},t)\geq u_*(x_{1})>0
\end{equation}
for any $x_{1}\not=0$ which is the first part of Statement (4).

If
$\frac{u_{0}\partial_{x}^{2}u_{0}}{1+(\partial_{x}u_{0})^{2}}-(d-1)\leq
0$ then by (~\ref{eq:meancurvature}) we have
$\partial_{t}u=[\frac{u\partial_{x}^{2}u}{1+(\partial_{x}u)^{2}}-(d-1)]\frac{1}{u}\leq
0$ for $t<t^*$, i.e. $u(x,t)$ is decreasing in time $t$. This
together with (~\ref{eq:nonzero}) implies that
$\displaystyle\lim_{t\rightarrow t^*}u(x,t)$ exists and $>0$ for any $x\not=0.$
This proves Statement (4) and with it completes the proof of Theorem
~\ref{maintheorem}.
\begin{flushright}
$\square$
\end{flushright}


\section{Lyapunov-Schmidt Splitting (Effective Equations)}
\label{Section:Splitting}
Lemma \ref{induction} and Equation (~\ref{eq:definew}) imply that
there is a time $0<t_{\#}\leq \infty$ such that the solution
$w(y,\tau)=v(y,\tau) e^{-\frac{a}{4} y^2}$ of \eqref{eqn:w} can be
decomposed as:
\begin{equation}
w=w_{ab}+\xi,\ \xi\bot\ \phi_{0,a},\ \phi_{2 a}, \label{eqn:split3}
\end{equation}
with the functions
$\phi_{0,a}:=(\frac{a}{2\pi})^\frac{1}{4}e^{-\frac{ay^{2}}{4}},\
\phi_{2,a}:=(\frac{a}{8\pi})^\frac{1}{4}(1-ay^{2})e^{-\frac{ay^{2}}{4}},$
the parameters $a$, $b$ being $C^{1}$ functions of $t,$ $w_{a
b}:=V_{a b } e^{-\frac{a}{4} y^2}$, the fluctuation
$\xi:=e^{-\frac{ay^{2}}{4}}\phi$ and the orthogonality understood in
the $L^{2}$ norm. According to their definition in Section
~\ref{Section:Reparam} the parameters $a$, $b$ and $c$ depend on the
rescaled time $\tau$ through the original time $t$: $a(t(\tau))$,
$b(t(\tau))$ and $c(t(\tau))$. To simplify the notation we will
write $a(\tau)$, $b(\tau)$ and $c(\tau)$ for $a(t(\tau))$,
$b(t(\tau))$ and $c(t(\tau))$. This will not cause confusion as the
original parameter functions $a(t)$, $b(t)$ and $c(t)$ are not used
in what follows. In this section we derive equations for the
parameters functions $a(\tau)$, $b(\tau)$ and $c(\tau)$ and the
fluctuation $\xi(y,\tau)$.


Substitute (~\ref{eqn:split3}) into (~\ref{eqn:w}) to obtain the
following equation for $\xi$
\begin{equation}\label{eq:xi}
\partial_{\tau}\xi(y,\tau)=-L(a,b)\xi+F(a,b)+N_{1}(a,b,\xi)+N_{2}(a,b,\xi)
\end{equation}
where $L(a,b)$ is the linear operator given by
$$L(a,b):=-\partial_{y}^{2}+\frac{a^{2}+ \partial_{\tau}a }{4}y^{2}-\frac{3a}{2}
-\frac{(d-1)(\frac{1}{2}+a)}{2(d-1)+b y^{2}}$$ and the functions
$F(a,b)$, $N_{1}(a,b,\xi)$ and $N_{2}(a,b,\xi)$ are defined as
\begin{equation}\label{eq:scource}
F:=\frac{1}{2}\exp{-\frac{a
y^{2}}{4}}(\frac{2(d-1)+by^{2}}{a+\frac{1}{2}})^{\frac{1}{2}}[\Gamma_{1}+\Gamma_{2}\frac{y^{2}}{2(d-1)+by^2}+F_{1}]
\end{equation}
with
\begin{equation}\label{eq:MainRemain}
\begin{array}{lll}
\Gamma_{1}&:=&\frac{ \partial_{\tau}a }{a+\frac{1}{2}}+a-\frac{1}{2}+\frac{b}{d-1};\\
& &\\
\Gamma_{2}&:=&- \partial_{\tau}b  -b(a-\frac{1}{2}+\frac{b}{d-1})-\frac{b^{2}}{d-1};\\
& &\\
F_{1}&:=&-\frac{1}{d-1}\frac{b^{3}y^{4}}{(2(d-1)+by^{2})^{2}};\\
& &\\
N_{1}(a,b,\xi)&:=&-\frac{d-1}{v}\frac{a+\frac{1}{2}}{2(d-1)+by^{2}}
\exp{\frac{ay^{2}}{4}} \xi^{2};\\
& &\\
N_{2}(a,b,\xi)&:=&-\exp{-\frac{a
y^{2}}{4}}\frac{(\partial_{y}v)^{2}\partial_{y}^{2}v}{1+(\partial_{y}v)^{2}}.
\end{array}
\end{equation} Here $v$ is the same as in (~\ref{eq:definev}) and is related to $\xi$ be (~\ref{eq:definew}) and (~\ref{eqn:split3}), and we ordered the terms in $F$
according to the leading power in $y^{2}$.

In the next three lemmas we prove estimates on the terms $N_{1},\
N_{2},\ \Gamma_{1},\ \Gamma_{2}$ and $F$. These estimates will be
used in later sections.
\begin{lemma}\label{LM:ESTnonlin}
Assume $v$ satisfies Condition (A) in Section ~\ref{SEC:ApriEst},
$v(y,0)\geq g(y,b_{0})$, $v(y,0)\in \langle y\rangle L^{\infty},$\\
$|\partial_{y}v(y,0)v^{-1/2}(y,0)|\leq \kappa_{0}\beta^{1/2}(0)$ and
$|\partial_{y}^{n}v(y,0)|\leq \kappa_{0}\beta^{\frac{n}{2}}(0),\
n=2,3,4$; and assume there exists a time $T\leq \tau(t_{\#})$ such
that for any $\tau\leq T,$ $|M(\tau)|$, $A(\tau), B(\tau)\leq
\beta^{-\frac{1}{4}}(\tau)$ and
$$ v(\cdot,\tau)\in \langle y\rangle L^{\infty},\
\partial_{y}v(\cdot,\tau),\ \partial_{y}^{2}v(\cdot,\tau)\in
L^{\infty}$$ and $v(y,\tau)\geq \frac{1}{4}\sqrt{2(d-1)}$. Then we
have
\begin{equation}\label{eq:ESTnonlin}
|N_{1}(a,b,\xi)|\lesssim \frac{1}{1+\beta y^{2}}
\exp{\frac{ay^{2}}{4}} |\xi|^{2},
\end{equation}
\begin{equation}\label{eq:estN1}
\|\exp{\frac{a y^{2}}{4}}N_{1}(a,b,\xi)\|_{m,n}\lesssim
\beta^{\frac{m+n+2}{2}}(\tau)P(M(\tau))
\end{equation} for $(m,n)=(3,0),\ (\frac{11}{10},0), \ (2,1);$
\begin{equation}\label{eq:mn12}
\|\exp{\frac{a y^{2}}{4}}N_{1}(a,b,\xi)\|_{1,2}\lesssim
\beta^{2}[M_{2,1}+M_{3,0}]+\beta^{5/2}M_{\frac{11}{10},0}^{2};
\end{equation}
\begin{equation}\label{eq:estN25}
\|\langle
y\rangle^{-5}\exp{\frac{ay^{2}}{4}}N_{2}(a,b,\xi)\|_{\infty}\lesssim
\beta^{3}[1+M_{1,2}+M_{2,1}^{2}+M_{1,2}M_{2,1}^{2}];
\end{equation} and, for $(m,n)=(3,0),\
(\frac{11}{10},0), \ (2,1),\ (1,2)$
\begin{equation}\label{eq:estN2}
\|\exp{\frac{a y^{2}}{4}}N_{2}(a,b,\xi)\|_{m,n}\lesssim
\beta^{\frac{m+n+2}{2}}(\tau)P(M(\tau)).
\end{equation}
\end{lemma}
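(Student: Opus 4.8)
The plan is first to remove the Gaussian factors. Since $\xi=e^{-ay^{2}/4}\phi$ by \eqref{eqn:split3}, we have $\exp(\tfrac{ay^{2}}{4})\xi^{2}=e^{-ay^{2}/4}\phi^{2}$, so
\begin{equation*}
\exp\Bigl(\frac{ay^{2}}{4}\Bigr)N_{1}=-\,\frac{(d-1)(a+\tfrac12)}{v\bigl(2(d-1)+by^{2}\bigr)}\,\phi^{2},\qquad \exp\Bigl(\frac{ay^{2}}{4}\Bigr)N_{2}=-\,\frac{(\partial_{y}v)^{2}\partial_{y}^{2}v}{1+(\partial_{y}v)^{2}}.
\end{equation*}
Under the stated hypotheses the assumptions of Proposition~\ref{PROP:UppLow} are met, so \eqref{eq:comparison} supplies $v\ge g(y,\beta)\gtrsim1$, $|v^{-1/2}\partial_{y}v|\lesssim\beta^{1/2}$ and $|\partial_{y}^{n}v|\lesssim\beta^{n/2}$ for $n=2,3,4$; moreover $a\in[\tfrac14,1]$ and $|b-\beta|\le\beta^{7/4}B\le\beta^{3/2}$, so $b$ is comparable to $\beta$. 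Combining these gives $\dfrac{1}{v(2(d-1)+by^{2})}\lesssim\dfrac{1}{1+\beta y^{2}}$, which is exactly \eqref{eq:ESTnonlin}.

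For the weighted estimates on $N_{1}$ I would, using the first identity, reduce everything to bounding weighted norms of $P\phi^{2}$ and of its first two $y$-derivatives, where $P:=\frac{(d-1)(a+1/2)}{v(2(d-1)+by^{2})}$ satisfies $|P|\lesssim(1+\beta y^{2})^{-1}$ and, by the derivative bounds above, $|\partial_{y}P|\lesssim\beta^{1/2}(1+\beta y^{2})^{-1}$, $|\partial_{y}^{2}P|\lesssim\beta(1+\beta y^{2})^{-1}$. Expanding by Leibniz, each occurrence of $\phi$, $\partial_{y}\phi$ or $\partial_{y}^{2}\phi$ is controlled by $\|\langle y\rangle^{-m}\partial_{y}^{n}\phi\|_{\infty}\le\beta^{(m+n+1)/2}M_{m,n}$ with $(m,n)\in\{(3,0),(\tfrac{11}{10},0),(2,1),(1,2)\}$. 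Then I split $\R$ into $\{\beta y^{2}\le20(d-1)\}$, where $\langle y\rangle\lesssim\beta^{-1/2}$ so that surplus powers of $\langle y\rangle$ cost powers of $\beta^{-1/2}$, and $\{\beta y^{2}\ge20(d-1)\}$, where $|P|\lesssim(\beta\langle y\rangle^{2})^{-1}$ contributes extra algebraic decay; in each region one uses, for every $\phi$-factor, whichever of $M_{3,0}$ or $M_{\frac{11}{10},0}$ is cheaper, and a routine power count then yields \eqref{eq:estN1}. For the sharper bound \eqref{eq:mn12} (the case $(m,n)=(1,2)$) I would keep the Leibniz terms $P\phi\,\partial_{y}^{2}\phi$, $P(\partial_{y}\phi)^{2}$, $(\partial_{y}P)\phi\,\partial_{y}\phi$ and $(\partial_{y}^{2}P)\phi^{2}$ separate: the last produces the $\beta^{5/2}M_{\frac{11}{10},0}^{2}$ term, while the first three power-count to $\beta^{5/2}$ times a quadratic monomial in $M$, which collapses to $\beta^{2}(M_{2,1}+M_{3,0})$ after absorbing one factor $M_{2,1},M_{1,2}\le\beta^{-1/4}$ from the a priori hypothesis.

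For $N_{2}$ I would first note that the elementary inequalities $\frac{|s|}{1+s^{2}}\le\frac12$ and $\frac{1}{1+s^{2}}\le1$, together with $|\partial_{y}^{n}v|\lesssim\beta^{n/2}$, make $(1+(\partial_{y}v)^{2})^{-1}$ and its first two $y$-derivatives bounded (the derivatives being $O(\beta)$); hence it suffices to bound $(\partial_{y}v)^{2}\partial_{y}^{2}v$ and, after at most two differentiations, finite products $\prod_{i}\partial_{y}^{n_{i}}v$ with $\sum_{i}n_{i}\le4$. Writing $(\partial_{y}v)^{2}=v\,(v^{-1/2}\partial_{y}v)^{2}$ extracts the bounded factor $v^{-1/2}\partial_{y}v=O(\beta^{1/2})$, and decomposing each $\partial_{y}^{n}v=\partial_{y}^{n}V_{ab}+\partial_{y}^{n}\phi$ separates a pure almost-solution contribution --- estimated by the explicit decay of $\partial_{y}^{n}V_{ab}$ near $\beta y^{2}\sim1$, which together with the fixed weight $\langle y\rangle^{-5}$ yields the ``$1$'' in \eqref{eq:estN25} --- from $\phi$-contributions that, estimated by the $M_{m,n}$ norms, yield the $M_{1,2}$, $M_{2,1}^{2}$ and $M_{1,2}M_{2,1}^{2}$ terms there; the same computation with the weaker weights $\langle y\rangle^{-m}$ and the higher derivatives $\partial_{y}^{3}v,\partial_{y}^{4}v$ supplying the needed powers of $\beta$ gives \eqref{eq:estN2}. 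In every case the resulting coefficient is a sum of monomials in the $M_{m,n}$ with nonnegative coefficients, i.e.\ a monotone nondecreasing polynomial.

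The main obstacle is the region-splitting power count rather than any single estimate: because $\phi$ does not decay (only $\langle y\rangle^{-3}\phi$ is bounded), the majorant norms alone never furnish enough negative powers of $\langle y\rangle$, so one must pair them precisely with the algebraic decay $(1+\beta y^{2})^{-1}$ coming out of $N_{1}$ --- respectively with the decay carried by $\partial_{y}^{n}V_{ab}$ and the fixed weights for $N_{2}$ --- keeping track of which factor every derivative falls on, in order to land on exactly the exponent $\beta^{(m+n+2)/2}$ and, for the cleaner bounds \eqref{eq:mn12} and \eqref{eq:estN25}, to trade surplus powers of $M$ for the extra half-power of $\beta$ via the a priori bounds $|M|,A,B\le\beta^{-1/4}$.
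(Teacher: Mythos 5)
Your reduction to bounding weighted norms of $P\phi^{2}$ (with $P=\frac{(d-1)(a+1/2)}{v(2(d-1)+by^{2})}$), the bounds on $P,\partial_{y}P,\partial_{y}^{2}P$, and the treatment of \eqref{eq:ESTnonlin}, \eqref{eq:estN1}, \eqref{eq:estN25}, \eqref{eq:estN2} all match the paper's route in spirit. However, there is a genuine gap in your argument for \eqref{eq:mn12}, and it is exactly the point that makes this case ``more involved'' in the paper.

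You propose to control each of $\phi,\ \partial_{y}\phi,\ \partial_{y}^{2}\phi$ in the Leibniz expansion of $\partial_{y}^{2}(P\phi^{2})$ by the majorant norms $\|\phi\|_{m,n}\le\beta^{(m+n+1)/2}M_{m,n}$. For the two terms $P(\partial_{y}\phi)^{2}$ and $(\partial_{y}P)\phi\,\partial_{y}\phi$ this cannot work: the only available weight for $\partial_{y}\phi$ is $(m,n)=(2,1)$, which gives $|\partial_{y}\phi|\lesssim\beta^{2}\langle y\rangle^{2}M_{2,1}$; multiplying by $\langle y\rangle^{-1}$ and $|P|\lesssim(1+\beta y^{2})^{-1}\lesssim(\beta\langle y\rangle^{2})^{-1}$ leaves, e.g., for $P(\partial_{y}\phi)^{2}$ a surviving factor $\langle y\rangle^{3}(1+\beta y^{2})^{-1}\sim y/\beta$ (and for $(\partial_{y}P)\phi\,\partial_{y}\phi$ a factor $\langle y\rangle^{21/10}(1+\beta y^{2})^{-1}\sim y^{1/10}/\beta$), which is \emph{unbounded} on $\{\beta y^{2}\ge20(d-1)\}$. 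The region splitting you propose does not repair this: in the outer region the algebraic decay of $P$ is not enough. Moreover your claim that these terms ``power-count to $\beta^{5/2}$ times a quadratic monomial in $M$'' and then become $\beta^{2}$ times a linear one by absorbing $M\le\beta^{-1/4}$ is numerically backwards: $\beta^{5/2}M^{2}\le\beta^{9/4}M\le\beta^{2}M$ does hold, but the terms above do not have a finite $L^{\infty}$ weight in the first place, so there is no $\beta^{5/2}M^{2}$ bound to start from.

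What the paper actually does here is crucial and is missing from your write-up: it uses \emph{pointwise} bounds on derivatives of $\phi$, obtained by writing $\partial_{y}^{n}\phi=\partial_{y}^{n}v-\partial_{y}^{n}V_{ab}$ and combining the bounds $|v^{-1/2}\partial_{y}v|\lesssim\beta^{1/2}$, $|\partial_{y}^{2}v|\lesssim\beta$ from Proposition~\ref{PROP:UppLow} with the explicit bounds $\|\partial_{y}^{n}V_{ab}\|_{\infty}\lesssim\beta^{n/2}$ in \eqref{eq:derivatives}. This yields $|v^{-1}\partial_{y}\phi|\lesssim\beta^{1/2}$ and $|\partial_{y}^{2}\phi|\lesssim\beta$ uniformly in $y$. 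Bounding one $\partial_{y}\phi$ (resp.\ $\partial_{y}^{2}\phi$) factor pointwise in this way and the remaining factor by a majorant is what produces the linear-in-$M$ terms $\beta^{2}[M_{2,1}+M_{3,0}]$ in \eqref{eq:mn12}; without it the estimate fails. You do invoke Proposition~\ref{PROP:UppLow} for $v$-bounds, and you do use them for $N_{2}$, but for $N_{1}$ you state explicitly that every $\phi$-factor is bounded by the majorant norms, which is the step that breaks. The other claimed estimates in the lemma are reachable by your scheme (for $(m,n)=(3,0),(\frac{11}{10},0),(2,1)$ the weights close, and \eqref{eq:estN25}, \eqref{eq:estN2} you handle via the $\partial_{y}^{n}v$ bounds as the paper does), so the gap is confined to \eqref{eq:mn12}.
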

\begin{proof}
By the explicit form of $N_{1}$ we have $$|N_{1}(a,b,\xi)|\lesssim
\frac{1}{|v|}\frac{\frac{1}{2}+a(\tau)}{1+b(\tau)y^{2}}
\exp{\frac{ay^{2}}{4}} \xi^{2}.$$

The assumptions $A(\tau),B(\tau)\leq \beta^{- \frac{1}{4} }(\tau)$
imply that $b=\beta+o(\beta)$ and $a=\frac{1}{2}+O(\beta)$ which
together with the assumption on $v$ yield (~\ref{eq:ESTnonlin}).

Now we prove (~\ref{eq:estN1}), $(m,n)=(3,0),\ (2,1).$  The estimate
of $(m,n)=(\frac{11}{10},0)$ is similar to that of $(3,0),$ and is
omitted.

We start with $\|\exp{\frac{a y^{2}}{4}}N_{1}(a,b,\xi)\|_{3,0}$.
Recall the definitions
$V_{ab}=(\frac{2(d-1)+by^{2}}{a+\frac{1}{2}})^{1/2}$ and
$\xi(y,\tau)=e^{-\frac{ay^{2}}{4}}\phi(y,\tau)$, the definition of
$N_{1}$ in (~\ref{eq:MainRemain}) yields
$$e^{\frac{ay^{2}}{4}}N_{1}=-\frac{d-1}{v}V_{ab}^{-2}\phi^{2}.$$ By
direct computations we obtain
$$\|\exp{\frac{a y^{2}}{4}}N_{1}(a,b,\xi)\|_{3,0}\leq
\|\phi\|_{3,0}\||V_{ab}|^{-\frac{11}{10}}\phi\|_{\infty}.$$ By the
definition of $V_{ab}$ we have $$|V_{ab}|^{-\frac{11}{10}}\lesssim
\langle y\rangle^{-\frac{11}{10}}\beta^{-\frac{11}{20}},$$ thus, recalling the definition (~\ref{eq:majorants}) of estimating functions,
$$\|\exp{\frac{a y^{2}}{4}}N_{1}(a,b,\xi)\|_{3,0}\lesssim \beta^{-\frac{11}{20}}\|\phi\|_{3,0}\|\phi\|_{\frac{11}{10},0}\leq \beta^{5/2}M_{3,0}M_{\frac{11}{10},0}.$$

Before estimating for the pairs $(1,2),\ (2,1)$, we recall the
decomposition of $v$ as $v(y,\tau)=V_{a(\tau),b(\tau)}+\phi(y,\tau)$
with the function $V_{a,b}$ admitting the estimates
\begin{equation}\label{eq:derivatives}
V^{-k}_{ab}\lesssim \beta^{-\frac{k}{2}}\langle y\rangle^{-k}\
\text{for any}\ k\geq 0,\ |\partial_{y}^{n}V_{a,b}^{-1}|\lesssim
\beta^{\frac{n}{2}}V_{a,b}^{-1},\
\|\partial_{y}^{n}V_{a,b}\|_{\infty}\lesssim \beta^{\frac{n}{2}},\
n=1,2,3,
\end{equation}
by the assumptions on the estimating functions $A$ and $B.$ Also
recall the inequalities $v\geq \frac{1}{4}\sqrt{2(d-1)}$ and
$|\frac{\partial_{y}v(y,\tau)}{v^{ \frac{1}{2} }(y,\tau)}|\leq
2\beta^{ \frac{1}{2} }(\tau),$ $|\partial_{y}^{n}v(y,\tau)|\lesssim
\beta^{\frac{n}{2}}(\tau),\ n=2,3,4$ proved in Proposition
~\ref{PROP:UppLow}.
By direct computation and the recalled facts above we have
$$
\begin{array}{lll}
|\partial_{y}\exp{\frac{a y^{2}}{4}}N_{1}(a,b,\xi)|&\leq&
|2V^{-3}\partial_{y}V||\phi^{2}|+|2V^{-2}\partial_{y}\phi||\phi|+|V^{-2}\phi^{2}||\frac{\partial_{y}v}{v^{-2}}|\\
&\lesssim&
|V^{-2}||\phi^{2}|\beta^{1/2}+|V^{-2}||\partial_{y}\phi||\phi|\\
&\lesssim &\beta^{2/5}|\langle
y\rangle^{-1/10}\phi|^{2}+\beta^{-\frac{11}{20}}\langle
y\rangle^{-\frac{11}{20}}|\partial_{y}\phi||\phi|
\end{array}
$$ consequently
$$
\begin{array}{lll} \|\exp{\frac{a
y^{2}}{4}}N_{1}(a,b,\xi)\|_{2,1}&\lesssim&
\beta^{-\frac{11}{20}}\|\phi\|_{2,1}\|\phi\|_{\frac{11}{10},0}+\beta^{2/5}\|\phi\|_{\frac{11}{10},0}^{2}\\
&\lesssim&
\beta^{5/2}[M_{2,1}M_{\frac{11}{10},0}+M^{2}_{\frac{11}{10},0}].
\end{array}
$$

The proof of (~\ref{eq:mn12}) is
more involved. By direct computation and the recalled facts in and
after (~\ref{eq:derivatives}) we have
$$
|\partial_{y}^{2}e^{\frac{a y^{2}}{4}}N_{1}(a,b,\xi)|\lesssim
V_{a,b}^{-2}[|{\partial_{y}^{2}\phi\phi}|+\beta^{ \frac{1}{2}
}|{\phi\partial_{y}\phi}|v^{-1}+|{(\partial_{y}\phi)^{2}}
{v^{-1}}|+\beta \phi^{2}].
$$
Again by the facts in and after (~\ref{eq:derivatives})
${|\partial_{y}\phi|}{v^{- \frac{1}{2} }}\leq {|\partial_{y}v|}{v^{-
\frac{1}{2} }}+{|\partial_{y}V_{ab}|}{v^{- \frac{1}{2} }}\lesssim
\beta^{ \frac{1}{2} }$ and $|\partial_{y}^{2}\phi|\leq
|\partial_{y}^{2}v|+|\partial_{y}^{2}V_{ab}|\lesssim \beta$, which
implies the estimate for the first two terms
$$\|\langle y\rangle^{-1}{\partial_{y}^{2}\phi\phi}
{V_{a,b}^{-2}}\|_{\infty}+\|\langle y\rangle^{-1}\beta^{ \frac{1}{2}
}{\phi\partial_{y}\phi}{V_{ab}^{-2}v^{-1}}\|_{\infty}\lesssim
\|\phi\|_{3,0}\leq \beta^{2}M_{3,0}.$$ Similarly for the third term
$$\|\langle y\rangle^{-1}{(\partial_{y}\phi)^{2}}
{v^{-1}V_{a,b}^{-2}}\|_{\infty}\lesssim \|\langle
y\rangle^{-1}V_{ab}^{-1}\partial_{y}\phi\|_{\infty}\|v^{-1}\partial_{y}\phi\|_{\infty}\lesssim\|\phi\|_{2,1}\leq
\beta^{2}M_{2,1}.$$ For the last term we have
$$\beta\|\langle y\rangle^{-1} {\phi^{2}}{
V_{a,b}^{-2}}\|_{\infty}\lesssim
\beta^{2/5}\|\phi\|_{\frac{11}{10},0}^{2}\leq
\beta^{5/2}M_{\frac{11}{10},0}^{2}.$$
Collecting the estimates above we have (~\ref{eq:mn12}).

Now we turn to (~\ref{eq:estN25}) and (~\ref{eq:estN2}). By the
definition of $N_{2}$ we have
$$|\exp{\frac{ay^{2}}{4}}N_{2}(a,b,\xi)|\lesssim
|\partial_{y}v|^{2}|\partial_{y}^{2}v|.$$ Recall that
$v=V_{a,b}+\phi$ with
$V_{a,b}:=(\frac{2(d-1)+by^{2}}{a+\frac{1}{2}})^{\frac{1}{2}}$ and
$\phi:=e^{\frac{ay^{2}}{4}}\xi$. The bounds
$|\partial_{y}^{2}V_{a,b}|, \ \langle
y\rangle^{-1}|\partial_{y}V_{a,b}|\lesssim \beta$ yield
$$|\langle y\rangle^{-2}\partial_{y}v|\leq \langle
y\rangle^{-2}|\partial_{y}V_{ab}|+\langle
y\rangle^{-2}|\partial_{y}\phi|\leq \beta+\beta^{2}M_{2,1}$$ and
$$|\langle y\rangle^{-1}\partial_{y}^{2}v|\leq \langle
y\rangle^{-1}|\partial_{y}^{2}V_{ab}|+\langle
y\rangle^{-1}|\partial_{y}^{2}\phi|\leq \beta+\beta^{2}M_{1,2}$$
which yield the estimate on $\langle
y\rangle^{-5}e^{\frac{ay^{2}}{4}}N_{2}(a,b,\xi)$ or Equation
(~\ref{eq:estN25}).

In what follows we only prove the cases of $(m,n)= (3,0), (1,2)$ of
(~\ref{eq:estN2}). The other cases are similar.

By the definition of $N_{2}$ we have
$$\|e^{\frac{ay^{2}}{4}}N_{2}(a,b,\xi)\|_{3,0}=\|\langle y\rangle^{-3}\frac{\partial_{y}^{2}v(\partial_{y}v)^{2}}{1+(\partial_{y}v)^{2}}\|\lesssim
\|\langle
x\rangle^{-2}\partial_{y}v\|_{\infty}^{3/2}\|\partial_{y}^{2}v\|_{\infty}\lesssim
\beta^{5/2}(1+M_{2,1}^{2})$$ where recall the facts $
|\partial_{y}^{n}v(y,\tau)|\lesssim \beta^{\frac{n}{2}}(\tau),\
n=2,3,4$ proved in Proposition ~\ref{PROP:UppLow}, and recall the
definition of $v$ as $v(y,\tau)=V_{a,b}+\phi(y,\tau)$ with $V_{a,b}$
admitting the estimates
$$\|\partial_{y}^{n}V_{a,b}\|_{\infty}\lesssim \beta^{\frac{n}{2}},\ n=1,2,3,\ \|\langle y\rangle^{-1}\partial_{y}V_{a,b}\|_{\infty}\lesssim \beta.$$

By direct computation we have
$$|\partial_{y}^{2}N_{2}(a,b,\xi)|\lesssim |\partial_{y}^{4}v||\frac{\partial_{y}v}{1+(\partial_{y}v)^{2}}|+|\partial_{y}^{3}v||\partial_{y}^{2}v|+|(\partial_{y}^{2}v)^{3}|$$Using the estimates on
$\partial_{y}^{n}v$ listed above and the estimate $\|\langle
y\rangle^{-2}\partial_{y}v\|_{\infty}\lesssim
\beta+\beta^{2}M_{2,1}$ we have
$$
\begin{array}{lll}
\|N_{2} (a,b,\xi) \|_{1,2}&\lesssim& \|v\|_{0,4}\|v\|_{2,1}^{ \frac{1}{2} }+\|v\|_{0,3}\|v\|_{0,2}+\|v\|_{0,2}^{3}\\
&\lesssim& \beta^{5/2}(1+M_{2,1}).
\end{array}
$$ Thus the proof is complete.
\end{proof}
Now we establish some estimates for $\Gamma_{1}$ and $\Gamma_{2}$
defined after Equation (~\ref{eq:scource}).
\begin{lemma}
If $|M(\tau)|,\ A(\tau),\ B(\tau)\leq \beta^{- \frac{1}{4} }(\tau)$
and $v(y,\tau)\geq \frac{1}{4}\sqrt{2(d-1)}$ then we have
\begin{equation}\label{eq:estI1I2}
|\Gamma_{1}|,\ |\Gamma_{2}| \lesssim
\beta^{3}P(M,A)
\end{equation} where $P(M,A)$ is a nondecreasing polynomial of the vector $M$ and $A.$
\end{lemma}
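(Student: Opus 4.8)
The quantities $\Gamma_1$ and $\Gamma_2$ are, by their definitions after \eqref{eq:scource}, precisely the combinations of $\partial_\tau a$, $\partial_\tau b$, $a-\frac12$ and $b$ that vanish when $(a,b)$ satisfy the \emph{reduced} (modulation) ODEs one extracts by projecting \eqref{eq:xi} onto the almost-tangent directions $\phi_{0,a}$ and $\phi_{2,a}$. So the plan is to \textbf{derive the modulation equations} by taking the $L^2$ inner product of \eqref{eq:xi} with $\phi_{0,a}$ and with $\phi_{2,a}$, using the orthogonality $\xi\perp\phi_{0,a},\phi_{2,a}$ to kill the leading contribution of $\partial_\tau\xi$ (only $\partial_\tau$ hitting the $a$-dependence of $\phi_{0,a},\phi_{2,a}$ survives, and that is itself $O(\partial_\tau a)$ times $\xi$, hence of order $\beta^{1/2}|M|\cdot|\Gamma|$-type and reabsorbable). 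This yields a linear system
\begin{equation*}
\mathcal{M}(a,b)\binom{\Gamma_1}{\Gamma_2} = \mathcal{R}(a,b,\xi),
\end{equation*}
where $\mathcal{M}$ is a $2\times2$ matrix whose entries are $\langle(\cdot)\phi_{0,a},\phi_{j,a}\rangle$-type integrals with $(\cdot)$ the coefficients $V_{ab}/2\cdot\{1,\ y^2/(2(d-1)+by^2)\}$ appearing in $F$, and $\mathcal{R}$ collects the inner products of $L(a,b)\xi$ (which is small by orthogonality plus the explicit smallness of the potential), of $F_1$, and of $N_1,N_2$ against $\phi_{0,a},\phi_{2,a}$.

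The key steps, in order: (1) \textbf{Invertibility of $\mathcal{M}$}: expand in $b$ as in Proposition~\ref{Prop:Splitting}, so $\mathcal{M}=\mathcal{M}_0+O(b)$ with $\mathcal{M}_0$ a constant matrix of Gaussian moments that is manifestly invertible (this is the same computation as for $G_1$ there), uniformly for $a\in[1/4,1]$; hence $\mathcal{M}^{-1}$ is bounded and $|\Gamma_1|+|\Gamma_2|\lesssim|\mathcal{R}|$. (2) \textbf{Estimate $\langle L(a,b)\xi,\phi_{j,a}\rangle$}: integrate by parts to move $-\partial_y^2$ onto $\phi_{j,a}$; since $(-\partial_y^2+\frac{a^2}{4}y^2-\frac{3a}{2})\phi_{j,a}$ is a multiple of $\phi_{j,a}$ for the harmonic-oscillator eigenfunctions (up to the $a^2+\partial_\tau a$ vs $a^2$ discrepancy, which is $O(\partial_\tau a)$), the orthogonality $\xi\perp\phi_{j,a}$ makes this term vanish to leading order; the remaining pieces are $\langle\partial_\tau a\cdot y^2\xi,\phi_{j,a}\rangle$ and $\langle\frac{(d-1)(1/2+a)}{2(d-1)+by^2}\xi,\phi_{j,a}\rangle$. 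The first is $O(|\partial_\tau a|\,\|\xi\|_{3,0})$; writing $\partial_\tau a$ via $\Gamma_1$ (from the definition, $\partial_\tau a=(a+\tfrac12)(\Gamma_1-a+\tfrac12-\tfrac{b}{d-1})$) this is absorbable on the left for $\beta$ small. The second is the genuinely small term: on the Gaussian-weighted support $|y|\lesssim\beta^{-1/2}$ of $\phi_{j,a}$ we have $\frac{1}{2(d-1)+by^2}=\frac{1}{2(d-1)}+O(\beta y^2)$, and the $\frac{1}{2(d-1)}$-piece contributes $\langle \text{const}\cdot\xi,\phi_{j,a}\rangle$ which — because $\phi_{0,a}$ and $a y^2-1$ times $\phi_{0,a}$ span the same subspace as $1$ and $1-ay^2$ that $\phi$ is orthogonal to — is again controlled by orthogonality, down to $O(\beta\|\langle y\rangle^{-3}\phi\|\cdot\text{moment})\lesssim\beta^{5/2}|M|$; collecting, $|\langle L\xi,\phi_{j,a}\rangle|\lesssim\beta^{3}P(M,A)$ after the reabsorption. (3) \textbf{Estimate $\langle F_1\cdot(\text{prefactor}),\phi_{j,a}\rangle$, $\langle N_i,\phi_{j,a}\rangle$}: from the explicit form $F_1=-\frac{1}{d-1}\frac{b^3y^4}{(2(d-1)+by^2)^2}$ one gets, after multiplying by $\frac12 e^{-ay^2/4}V_{ab}\langle y\rangle^{?}$ and pairing with the Gaussian $\phi_{j,a}$, a bound $\lesssim b^3\lesssim\beta^3$; for $N_1,N_2$ use \eqref{eq:estN1}--\eqref{eq:estN2} with $(m,n)=(3,0)$ together with $|\langle h,\phi_{j,a}\rangle|\lesssim\|h\|_{3,0}\|\langle y\rangle^{3}\phi_{j,a}\|_{L^1}\lesssim\beta^{-\,\cdot}\|h\|_{3,0}$, giving again $\lesssim\beta^3 P(M)$. (4) Combine (1)--(3): $|\Gamma_1|+|\Gamma_2|\lesssim|\mathcal{R}|\lesssim\beta^3 P(M,A)$, which is \eqref{eq:estI1I2}.

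The main obstacle is \textbf{Step (2)}, the self-improving estimate on $\langle L(a,b)\xi,\phi_{j,a}\rangle$: one must be careful that the pieces of $L$ that are \emph{not} annihilated by the orthogonality — namely the $\partial_\tau a$ term and the $b y^2$-correction inside the potential — produce exactly $O(\beta^3)$ and not merely $O(\beta^{5/2})$, and that the $\partial_\tau a$ term, which a priori is only $O(|\partial_\tau a|)=O(|\Gamma_1|+\beta)$, is genuinely reabsorbable rather than circular. This forces the argument to be set up as solving for the vector $(\Gamma_1,\Gamma_2)$ from a linear system \emph{with the $\partial_\tau a$, $\partial_\tau b$ contributions of $\mathcal{R}$ moved to the left-hand side}, so that the bounded inverse $\mathcal{M}^{-1}$ (perturbed by these small off-diagonal corrections, still invertible for $\beta$ small) delivers the clean bound. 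Everything else is Gaussian moment bookkeeping and invocations of Lemma~\ref{LM:ESTnonlin} and Proposition~\ref{PROP:UppLow}.
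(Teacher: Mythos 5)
Your proposal follows the same strategy as the paper's proof: pair \eqref{eq:xi} with $\phi_{0,a},\phi_{2,a}$, use the orthogonality $\xi\perp\phi_{0,a},\phi_{2,a}$ to kill the leading $\partial_\tau\xi$ and $L\xi$ contributions, extract $\Gamma_1,\Gamma_2$ from the resulting near-diagonal $2\times2$ system (the paper writes this as a pair of lower bounds using $\phi_{0,a}\perp\phi_{2,a}$ rather than inverting an explicit matrix $\mathcal{M}$, but it is the same computation), estimate the $by^2\xi$, $\partial_\tau a\,y^2\xi$, $N_1$, $N_2$ remainders via \eqref{eq:ESTnonlin} and \eqref{eq:estN25}, and close via the bound $|\partial_\tau a|\lesssim|\Gamma_1|+\beta^2 A$ and the assumption $|M|\le\beta^{-1/4}$. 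The only slip is the intermediate claim $\beta\|\phi\|_{3,0}\lesssim\beta^{5/2}|M|$, which should read $\beta^3 M_{3,0}$ since $\|\phi\|_{3,0}=\beta^2 M_{3,0}$; your final bound is nonetheless stated correctly.
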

\begin{proof}
Taking the inner products of (~\ref{eq:xi}) with the functions
$\phi_{0,a}:=\exp{-\frac{a y^{2}}{4}}$ and $
\phi_{2,a}:=(ay^{2}-1)\exp{-\frac{a y^{2}}{4}}$ and using the
orthogonality conditions in (~\ref{eqn:split2}) we have
\begin{equation}\label{eq:Projection}
\begin{array}{lll}
|\langle F(a,b),\phi_{0,a}\rangle|&\leq & G_{1}\\
& &\\
|\langle F(a,b),\phi_{2,a}\rangle|&\leq & G_{2}
\end{array}
\end{equation} where the functions $G_{1},\ G_{2}$ are defined as
$$G_{1}:=\langle |by^{2}\xi|+| \partial_{\tau}a y^{2}\xi|+|N_{1}(a,b,\xi)|+|N_{2}(a,b,\xi)|,\exp{-\frac{a}{4}y^{2}}\rangle$$
and $$G_{2}:=\langle |by^{2}\xi|+| \partial_{\tau}a
y^{2}\xi|+|N_{1}(a,b,\xi)|+|N_{2}(a,b,\xi)|,|(ay^{2}-1)|\exp{-\frac{a}{4}y^{2}}\rangle.$$
By Equations (~\ref{eq:ESTnonlin}), (~\ref{eq:estN25}) and the
assumptions on $A(\tau)$ and $B(\tau)$ we have that
$$G_{1},G_{2}\lesssim | \partial_{\tau}a |\beta^{2}M_{3,0}+\beta^{3}[1+M_{1,2}+M_{2,1}^{2}+M_{1,2}M_{2,1}^{2}+M_{3,0}^{2}].$$

We rewrite the function $F(a,b)$ in (~\ref{eq:scource}) as
$$F(a,b)=\chi(a,b)[\Gamma_{1}+\Gamma_{2}\frac{1}{a[2(d-1)+by^{2}]}+\Gamma_{2}\frac{ay^{2}-1}{a[2(d-1)+by^{2}]}+F_{1}].$$
where, recall, $F_{1}$ is defined in (~\ref{eq:MainRemain}),
and $\chi(a,b)$ is defined as
$$\chi(a,b):=(\frac{2(d-1)+by^{2}}{a+\frac{1}{2}})^{\frac{1}{2}}\frac{1}{2}\exp{-\frac{a y^{2}}{4}}.$$
By using the fact that $\phi_{0,a}\perp\phi_{2,a}$ we have
$$|\langle F(a,b),\phi_{0,a}\rangle|\gtrsim |\Gamma_{1}+\frac{1}{
{2a(d-1)}}\Gamma_{2}|-|b|(|\Gamma_{1}|+|\Gamma_{2}|)-|b|^{3}$$ and
$$|\langle F(a,b),\phi_{2,a}\rangle|\gtrsim
|\Gamma_{2}|-|b|(|\Gamma_{1}|+|\Gamma_{2}|)-b^{3},$$ which together
with (~\ref{eq:Projection}) implies that
$$
|\Gamma_{1}|,\ |\Gamma_{2}|\lesssim
\beta^{3}[1+M_{1,2}+M_{2,1}^{2}+M_{1,2}M_{2,1}^{2}+M_{3,0}^{2}]+|
\partial_{\tau}a |\beta^{2}M_{3,0}.
$$
Moreover by the definition of $\Gamma_{1}$ $ \partial_{\tau}a $ has
the bound
$$| \partial_{\tau}a |\lesssim |\Gamma_{1}|+\beta^{2}A.$$
Consequently $$|\Gamma_{1}|,\ |\Gamma_{2}|\lesssim
\beta^{3}[1+M_{1,2}+M_{2,1}^{2}+M_{1,2}M_{2,1}^{2}+M_{3,0}^{2}]+|\Gamma_{1}|\beta^{2}M_{3,0}+\beta^{4}AM_{3,0}.$$
This together with the assumption that $|M(\tau)|\leq \beta^{-
\frac{1}{4} }(\tau)$ implies (~\ref{eq:estI1I2}).
\end{proof}
To facilitate the later estimates we list the estimates of $F$ in
the following lemma.
\begin{lemma}
If $A(\tau),\ B(\tau)\leq \beta^{- \frac{1}{4} }(\tau)$ and
$(m,n)=(3,0), (\frac{11}{10},0),\ (1,2), \ (2,1),$ then
\begin{equation}\label{eq:estSource}
\| \exp{\frac{ay^{2}}{4}} F(a,b)\|_{m,n} \lesssim
\beta^{\frac{m+n+2}{2}}(\tau)P(M,A).
\end{equation}
\end{lemma}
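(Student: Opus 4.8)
The plan is to read $F$ off its definition \eqref{eq:scource}: after multiplying by the gauge factor, the $\exp{-\frac{ay^{2}}{4}}$ cancels and
$$\exp{\tfrac{ay^{2}}{4}}F(a,b)=\tfrac12\lb\tfrac{2(d-1)+by^{2}}{a+\frac12}\rb^{1/2}\lsb\Gamma_{1}+\Gamma_{2}\,\tfrac{y^{2}}{2(d-1)+by^{2}}+F_{1}\rsb,$$
with $\Gamma_{1},\Gamma_{2},F_{1}$ as in \eqref{eq:MainRemain}. I would bound the $\|\cdot\|_{m,n}$-norm of each of the three resulting terms separately. The three key inputs are: (a) the preceding lemma \eqref{eq:estI1I2}, which gives $|\Gamma_{1}|,|\Gamma_{2}|\lesssim\beta^{3}P(M,A)$, together with the fact that $\Gamma_{1},\Gamma_{2}$ are functions of $\tau$ alone and hence are untouched by the $\partial_{y}^{n}$ in the norm; (b) the bounds on $V_{ab}:=\lb\frac{2(d-1)+by^{2}}{a+1/2}\rb^{1/2}$ gathered in \eqref{eq:derivatives}, namely $\|\partial_{y}^{n}V_{ab}\|_{\infty}\lesssim\beta^{n/2}$ for $n=0,1,2,3$ and $V_{ab}^{-k}\lesssim\beta^{-k/2}\langle y\rangle^{-k}$; and (c) the consequences $b=\beta(1+o(1))$, $a=\frac12+O(\beta)$ of the hypotheses $A,B\le\beta^{-1/4}$, which let one freely replace $2(d-1)+by^{2}$ by $(a+\tfrac12)V_{ab}^{2}$ and give the two-sided comparison $1\lesssim V_{ab}\lesssim\langle y\rangle$.

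For the $\Gamma_{1}$-term one only needs $\|\langle y\rangle^{-m}\partial_{y}^{n}V_{ab}\|_{\infty}\lesssim\beta^{n/2}$, so that contribution is $\lesssim\beta^{3+n/2}P(M,A)\le\beta^{(m+n+2)/2}P(M,A)$ since $m\le3$. For the $\Gamma_{2}$-term I would rewrite $V_{ab}\frac{y^{2}}{2(d-1)+by^{2}}=\frac{1}{a+1/2}\cdot\frac{y^{2}}{V_{ab}}$, and for the $F_{1}$-term $V_{ab}F_{1}=-\frac{1}{(d-1)(a+1/2)^{2}}\cdot\frac{b^{3}y^{4}}{V_{ab}^{3}}$; in each case the remaining $y$-dependent factor is of the form $y^{k}V_{ab}^{-j}$, and I would estimate $\|\langle y\rangle^{-m}\partial_{y}^{n}(y^{k}V_{ab}^{-j})\|_{\infty}$ by splitting $\R$ into the region $|y|\lesssim\beta^{-1/2}$, where $V_{ab}\sim1$ and $y^{k}V_{ab}^{-j}\sim y^{k}$, and the region $|y|\gtrsim\beta^{-1/2}$, where $V_{ab}\sim\sqrt{\beta}\,|y|$ and $y^{k}V_{ab}^{-j}\sim\beta^{-j/2}|y|^{k-j}$; on each region the supremum of $\langle y\rangle^{-m}$ times a power of $|y|$ is attained at the crossover $|y|\sim\beta^{-1/2}$ and produces a clean power of $\beta$. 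Carrying this out, the $\Gamma_{2}$-term is bounded by $\beta^{(m+n+2)/2}P(M,A)$ with room to spare, while the $F_{1}$-term comes out as $\beta^{3+(m+n-4)/2}=\beta^{(m+n+2)/2}$; summing the three contributions yields \eqref{eq:estSource} for the four pairs $(m,n)$ in question.

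The computation is elementary once organized this way; the only point requiring care is that the $F_{1}$-term \emph{saturates} the claimed exponent $\frac{m+n+2}{2}$, so no power of $\beta$ or of $\langle y\rangle$ may be wasted there. Concretely, the delicate step is the two-region estimate of $\|\langle y\rangle^{-m}\partial_{y}^{n}(y^{4}V_{ab}^{-3})\|_{\infty}$ for the fractional weight $m=\frac{11}{10}$ and for $n=1,2$, where one must check that the crossover at $|y|\sim\beta^{-1/2}$ together with the derivatives falling on $V_{ab}^{-3}$ (each producing a factor $\partial_{y}V_{ab}\lesssim\beta^{1/2}$) give exactly $\beta^{(m+n-4)/2}$ and not something larger. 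The $\Gamma_{1}$- and $\Gamma_{2}$-terms, and the cases $(m,n)=(3,0),(2,1)$ of the $F_{1}$-term, all have slack.
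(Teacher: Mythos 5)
Your proposal follows essentially the same route as the paper: decompose $e^{ay^2/4}F$ into the three pieces $\tfrac12 V_{ab}\Gamma_1$, $\tfrac12 V_{ab}\Gamma_2 y^2/(2(d-1)+by^2)$, $\tfrac12 V_{ab}F_1$; feed in $|\Gamma_1|,|\Gamma_2|\lesssim\beta^3 P(M,A)$ from \eqref{eq:estI1I2}; and control the remaining $y$-dependent factors by a weighted-sup estimate whose extremum sits at the crossover $|y|\sim\beta^{-1/2}$. The paper only writes out the $n=0$ cases and declares the rest "similar," so your extra remarks — that $\Gamma_1,\Gamma_2$ depend on $\tau$ alone and so commute with $\partial_y^n$, and that the $F_1$-term is the one that saturates $\beta^{(m+n+2)/2}$ and therefore needs the genuine two-region split rather than the lossy pointwise bound $V_{ab}^{-k}\lesssim\beta^{-k/2}\langle y\rangle^{-k}$ — are the correct and useful observations that justify that claim.
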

\begin{proof}
In the following we only prove the cases $(m,n)=(\frac{11}{10},0),\
(3,0)$. The proof of the remaining cases are similar.

We start with $(m,n)=(\frac{11}{10},0).$ Recall the definition of
$F$ in Equation (~\ref{eq:scource}). We observe that $$\|\langle
y\rangle^{-\frac{11}{10}}\frac{y^{2}}{(2(d-1)+by^{2})^{ \frac{1}{2}
}}\|_{\infty}\leq b^{-\frac{9}{20}}\lesssim \beta^{-\frac{9}{20}}.$$
Thus the estimates on $\Gamma_{1}$ and $\Gamma_{2}$ in
(~\ref{eq:estI1I2}) imply that
$$
\begin{array}{lll}
\|\langle
y\rangle^{-\frac{11}{10}}(\frac{2(d-1)+by^{2}}{a+\frac{1}{2}})^{\frac{1}{2}}[\Gamma_{1}+\Gamma_{2}\frac{y^{2}}{2(d-1)+by^2}]\|_{\infty}
&\lesssim & (|\Gamma_{1}|+|\Gamma_{2}|)\beta^{-\frac{9}{20}}\lesssim
\beta^{ \frac{51}{20} }P(M,A).
\end{array}
$$ For the term $F_{1}$ by similar reasoning we have
$$\|\langle y\rangle^{-\frac{11}{10}}F_{1}(\frac{2(d-1)+by^{2}}{a+\frac{1}{2}})^{\frac{1}{2}}\|_{\infty}\lesssim
\beta^{ \frac{31}{20} }.$$ Combining the estimates above we complete
the estimate for $(m,n)=(\frac{11}{10},0).$

The estimate for $(m,n)=(3,0)$ is easier by using the observation
$$
\begin{array}{lll}
\|\langle
y\rangle^{-3}(\frac{2(d-1)+by^{2}}{a+\frac{1}{2}})^{\frac{1}{2}}[\Gamma_{1}+\Gamma_{2}\frac{y^{2}}{2(d-1)+by^2}]\|_{\infty}
&\lesssim & |\Gamma_{1}|+|\Gamma_{2}| \lesssim \beta^{3}P(M,A).
\end{array}
$$ and
$$\|\langle
y\rangle^{-3}F_{1}(\frac{2(d-1)+by^{2}}{a+\frac{1}{2}})^{\frac{1}{2}}\|_{\infty}\lesssim
\beta^{5/2}.$$

Collecting the estimates above we complete the proof.
\end{proof}
\section{Proof of Estimates
\eqref{eq:B}-\eqref{eq:A}}\label{SEC:EstB} The following lemmas show
that $b$ and $\beta$ are closely related.
\begin{lemma}
If $B(\tau)\le \beta^{-\frac{1}{4}}(\tau)$ for $\tau\in[0,T]$, then
(~\ref{eq:B}) holds.
\end{lemma}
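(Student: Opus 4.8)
The plan is to convert (~\ref{eq:B}) into a Duhamel/Gronwall estimate for the difference $e(\tau):=b(\tau)-\beta(\tau)$. \textbf{Setting up the ODE.} From the definition of $\Gamma_2$ right after (~\ref{eq:scource}) we read off the evolution law
$$\partial_\tau b=-\frac{b^{2}}{d-1}-b\lb a-\tfrac12+\tfrac{b}{d-1}\rb-\Gamma_2,$$
while (~\ref{FunBTau}) gives $\partial_\tau\beta=-\frac{\beta^{2}}{d-1}$ together with $\beta(0)=b(0)$, so $e$ solves the linear inhomogeneous equation
$$\partial_\tau e+\frac{b+\beta}{d-1}\,e=g,\qquad g:=-b\lb a-\tfrac12+\tfrac{b}{d-1}\rb-\Gamma_2,\qquad e(0)=0.$$
\textbf{Bounding the forcing.} By the definition of $A$ in (~\ref{eq:majorants}), $|a-\tfrac12+\tfrac{b}{d-1}|\le\beta^{2}A$; the hypothesis $B\le\beta^{-1/4}$ gives $|b-\beta|\le\beta^{7/4}B\le\beta^{3/2}$, so $b=\beta(1+\O{\beta^{1/2}})$; and the lemma preceding this section furnishes $|\Gamma_2|\lesssim\beta^{3}P(M,A)$. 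Hence $|g|\lesssim\beta^{3}P(M,A)$.

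\textbf{The integrating factor and conclusion.} Since $b,\beta>0$ one has $b+\beta=2\beta\lb1+\O{\beta^{1/2}}\rb$, and from (~\ref{FunBTau}), $\int_s^\tau\frac{\beta(s')}{d-1}\,ds'=\ln\frac{\beta(s)}{\beta(\tau)}$, so the kernel of the Duhamel formula satisfies $\exp\lb-\int_s^\tau\frac{b+\beta}{d-1}\,ds'\rb\lesssim\lb\beta(\tau)/\beta(s)\rb^{2}$. Therefore, using $e(0)=0$,
$$|e(\tau)|\le\int_0^{\tau}\exp\lb-\int_s^{\tau}\frac{b+\beta}{d-1}\,ds'\rb|g(s)|\,ds\lesssim\beta^{2}(\tau)\int_0^{\tau}\frac{|g(s)|}{\beta^{2}(s)}\,ds\lesssim\beta^{2}(\tau)\int_0^{\tau}\beta(s)\,P(M(s),A(s))\,ds.$$
Because $P$ is monotone nondecreasing and $M_{m,n}$, $A$ are nondecreasing (being maxima over $[0,\tau]$), we may pull $P(M(\tau),A(\tau))$ out of the integral, leaving $\int_0^{\tau}\beta(s)\,ds=(d-1)\ln\frac{\beta(0)}{\beta(\tau)}\lesssim|\ln\beta(\tau)|$. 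Hence $|e(\tau)|\lesssim\beta^{2}(\tau)|\ln\beta(\tau)|\,P(M(\tau),A(\tau))$, which yields
$$B(\tau)=\max_{s\le\tau}\beta^{-7/4}(s)|e(s)|\lesssim\max_{s\le\tau}\beta^{1/4}(s)|\ln\beta(s)|\,P(M(s),A(s))\lesssim 1+P(M(\tau),A(\tau)),$$
since $\beta\le b(0)\le\epsilon_0$ is small so that $\beta^{1/4}|\ln\beta|$ is bounded; this is (~\ref{eq:B}).

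\textbf{Expected main obstacle.} The delicate point is extracting genuine decay from the integrating factor: the coefficient $\tfrac{b+\beta}{d-1}$ itself decays like $\beta\sim 1/\tau$, so the Duhamel kernel only decays algebraically, as $(\beta(\tau)/\beta(s))^{2}$, and one must verify that the resulting logarithmic loss $\int_0^{\tau}\beta\sim|\ln\beta(\tau)|$ is safely absorbed by the $\beta^{1/4}$ gap between the $\beta^{7/4}$ weight defining $B$ and the $\beta^{2}$ that the estimate actually produces. A secondary point is that evaluating the integrating factor requires the a priori closeness $b=\beta(1+\O{\beta^{1/2}})$, which is exactly the content of the hypothesis $B\le\beta^{-1/4}$; thus the estimate is part of a bootstrap and should be read together with the standing assumptions of Proposition ~\ref{Pro:Main2}.
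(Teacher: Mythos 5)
Your proof is correct and follows essentially the same strategy as the paper: derive the ODE $\partial_\tau b + \frac{1}{d-1}b^2 = O(\beta^3 P(M,A))$ from the $\Gamma_2$ estimate (absorbing the $b(a-\frac12+\frac{b}{d-1})$ term via the definition of $A$), compare $b$ with the model trajectory $\beta$, integrate, and absorb the logarithmic loss $\int_0^\tau\beta\sim|\ln\beta(\tau)|$ into the $\beta^{1/4}$ margin between the $\beta^{7/4}$ weight and the $\beta^2$ that comes out. The only mechanical difference is that you run Duhamel directly on $e=b-\beta$ and estimate the integrating factor $\exp(-\int_s^\tau\frac{b+\beta}{d-1})\lesssim(\beta(\tau)/\beta(s))^2$, whereas the paper divides the ODE by $b^2$ and integrates the linear equation for $\frac{1}{b}-\frac{1}{\beta}$; both routes land on the same bound $|b-\beta|\lesssim\beta^2(\tau)\int_0^\tau\beta\,P\,ds$.
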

\begin{proof}
We begin with rewriting equation \eqref{eq:estI1I2} as
\begin{equation*}
| \partial_{\tau}b  + \frac{1}{d-1}b^2|\lesssim b|
\frac{1}{2}-a+\frac{1}{d-1}b |+\beta^{3}P(M,A).
\end{equation*}
The first term on the right hand side is bounded by $b \beta^{2}
A\lesssim \beta^{3}A$ by the definition of $A$.
Hence we have
\begin{align}
\left| \partial_{\tau}b  + \frac{1}{d-1}b^2\right|&\lesssim \beta^3
P(M,A) \label{eqn:Simplifiedb},
\end{align}
To prove (~\ref{eq:B}) we begin by dividing
\eqref{eqn:Simplifiedb} by $b^2$ and using the inequality $\beta\lesssim
b$ to obtain the estimate
\begin{equation}
\left| \partial_\tau\frac{1}{b}-\frac{1}{d-1} \right|\lesssim \beta
P(M,A). \label{est:InversebDE}
\end{equation}
Since $\beta$ satisfies $-\partial_\tau \beta^{-1}+\frac{1}{d-1}=0$,
Equation \eqref{est:InversebDE} implies that
\begin{equation*}
|\partial_\tau( \frac{1}{b}-\frac{1}{\beta})|\lesssim
\beta P(M,A).
\end{equation*}
Integrating this equation over $[0,\tau]$, multiplying the result by
$\beta^{\frac{1}{4}}$ and using that $b\lesssim \beta$ and
$\beta(0)=b(0)$ give the estimate
\begin{equation*}
\beta^{-7/4}|\beta-b|\lesssim \beta^{\frac{1}{4}}\int_0^\tau \beta
P(M,A) ds
\end{equation*} which together with definitions of $\beta$ and $B$
implies (~\ref{eq:B}).
\end{proof}
\begin{lemma}
If $A(\tau),\ B(\tau)\le \beta^{-\frac{1}{4}}(\tau)$ for
$\tau\in[0,T]$, then (~\ref{eq:A}) holds.
\end{lemma}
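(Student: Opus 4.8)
The plan is to work with the combination $\alpha(\tau):=a(\tau)-\tfrac12+\tfrac{1}{d-1}b(\tau)$, whose rescaled size is exactly what $A$ controls, since $A(T)=\max_{\tau\le T}\beta^{-2}(\tau)|\alpha(\tau)|$. First I would derive a damped scalar ODE for $\alpha$ by reading off $\partial_\tau a$ and $\partial_\tau b$ from the definitions of $\Gamma_1$ and $\Gamma_2$ in \eqref{eq:MainRemain}: since $\Gamma_1=\tfrac{\partial_\tau a}{a+1/2}+\alpha$ and $\Gamma_2=-\partial_\tau b-b\alpha-\tfrac{b^2}{d-1}$, forming $\partial_\tau a+\tfrac{1}{d-1}\partial_\tau b$ gives
$$\partial_\tau\alpha=-p(\tau)\,\alpha+R(\tau),\qquad p:=a+\tfrac12+\tfrac{b}{d-1},\qquad R:=\big(a+\tfrac12\big)\Gamma_1-\tfrac{1}{d-1}\Gamma_2-\tfrac{b^2}{(d-1)^2}.$$
The coefficient $p$ is bounded above and below by positive constants: $p\le 2$ since $a\le 1$ and $b\le\epsilon_0$, while $p\ge\tfrac34$ since $b\ge0$ and $a\ge\tfrac14$ (by admissibility of $\lambda$; equivalently the hypothesis $A(\tau)\le\beta^{-1/4}(\tau)$ forces $a=\tfrac12+O(\beta)$). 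The genuine damping encoded in $p>0$ is the structural fact that drives $a$ toward $\tfrac12-\tfrac{b}{d-1}$, and it is what makes the whole estimate go through; the rest is bookkeeping.

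Next I would bound the source term. By \eqref{eq:estI1I2} we have $|\Gamma_1|,|\Gamma_2|\lesssim\beta^3P(M,A)$, and $B(\tau)\le\beta^{-1/4}(\tau)$ gives $|b-\beta|\lesssim\beta^{3/2}$, hence $b\lesssim\beta$; therefore $|R(\tau)|\lesssim\beta^2(\tau)\big(1+\beta(\tau)P(M(\tau),A(\tau))\big)$. Duhamel's formula gives
$$\alpha(\tau)=e^{-\int_0^\tau p}\,\alpha(0)+\int_0^\tau e^{-\int_s^\tau p(\sigma)\,d\sigma}\,R(s)\,ds,$$
which I would estimate termwise after multiplying by $\beta^{-2}(\tau)$. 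With $T_0:=(d-1)/\beta(0)\ge1$, the homogeneous term equals $\tfrac{\beta^2(0)}{\beta^2(\tau)}e^{-\int_0^\tau p}A(0)=(1+\tau/T_0)^2e^{-\int_0^\tau p}A(0)\lesssim A(0)$, since $(1+\tau)^2e^{-3\tau/4}$ is bounded. For the Duhamel term, $M$ and $A$ are nondecreasing in $\tau$ and $P$ is a monotone nondecreasing polynomial, so $P(M(s),A(s))\le P(M(\tau),A(\tau))$ and $\beta(s)\le\beta(0)$ for $s\le\tau$; hence this term is $\lesssim\big(1+\beta(0)P(M(\tau),A(\tau))\big)\,\beta^{-2}(\tau)\int_0^\tau e^{-\frac34(\tau-s)}\beta^2(s)\,ds$.

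The only step needing care is the elementary bound $\beta^{-2}(\tau)\int_0^\tau e^{-\frac34(\tau-s)}\beta^2(s)\,ds\lesssim 1$, uniformly in the large parameter $T_0$ — this is where the slow variation of $\beta$ plays against the exponential damping. I would split the integral at $s=\tau/2$: on $[\tau/2,\tau]$ one has $\beta(s)\le 2\beta(\tau)$, so that piece is $\lesssim\int_{\tau/2}^\tau e^{-\frac34(\tau-s)}\,ds\lesssim 1$; on $[0,\tau/2]$ one bounds $e^{-\frac34(\tau-s)}\le e^{-3\tau/8}$ and computes $\beta^{-2}(\tau)\int_0^{\tau/2}\beta^2(s)\,ds\lesssim(1+\tau)^2$, so that piece is $\lesssim(1+\tau)^2 e^{-3\tau/8}\lesssim 1$. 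Combining everything yields $\beta^{-2}(\tau)|\alpha(\tau)|\lesssim A(0)+1+\beta(0)P(M(\tau),A(\tau))$; since the right-hand side is nondecreasing in $\tau$, this bound holds with any $\sigma\le\tau$ in place of $\tau$, and taking the supremum over $\sigma\le\tau$ gives \eqref{eq:A}.
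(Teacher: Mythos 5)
Your proof is correct and takes essentially the same route as the paper: the paper also sets $\Gamma:=a-\tfrac12+\tfrac{1}{d-1}b$, derives $\partial_\tau\Gamma+\bigl(a+\tfrac12+\tfrac{1}{d-1}b\bigr)\Gamma=-\tfrac{b^2}{(d-1)^2}+\mathcal{R}_b$ with $|\mathcal{R}_b|\lesssim\beta^3P(M,A)$, and then uses the integrating factor $\mu=\exp\int(a+\tfrac12+\tfrac{1}{d-1}b)$ together with the lower bound $a+\tfrac12+\tfrac{1}{d-1}b\geq\tfrac12$ on the damping. The only difference is that you spell out the elementary convolution estimates ($\beta^{-2}\mu^{-1}\Gamma(0)\lesssim A(0)$ and $\beta^{-2}\mu^{-1}\int_0^\tau\mu\beta^2\lesssim 1$) which the paper states as ``not difficult to show.''
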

\begin{proof}
Define the quantity $\Gamma:=a-\frac{1}{2}+\frac{1}{d-1} b$.  We
prove the proposition by integrating a differential inequality for
$\Gamma$. Differentiating $\Gamma$ with respect to $\tau$ and
substituting for $ \partial_{\tau}b  $ and $\partial_\tau a$ the
expression in terms of $\Gamma_{1}$ and $\Gamma_{2}$ (see
~\ref{eq:estI1I2}) and using Equation \eqref{eq:estI1I2}, we obtain
\begin{equation*}
\partial_\tau \Gamma+ ( a+\frac{1}{2}+\frac{1}{d-1}b
)\Gamma=- \frac{1}{(d-1)^{2}}b^2+{\cal R}_b.
\end{equation*} where ${\cal R}_{b}$ has the bound $$|\mathcal{R}_{b}|\leq
\beta^{3}P(M,A).$$ Let $\mu=\exp{\int_{0}^{\tau} a(s)+\frac{1}{2}+
\frac{1}{d-1}b(s) ds}$. Then the above equation implies that
\begin{equation*}
\mu\Gamma=\Gamma_{0}-\int_0^\tau \frac{\mu b^2}{(d-1)^{2}}\,
ds+\int_0^\tau \mu {\cal R}_b\, ds.
\end{equation*}
We now use the inequality $b\lesssim \beta$ and the estimate of
${\cal R}_b$ to estimate over $[0,\tau]\le [0,T]$:
\begin{equation*}
\begin{array}{lll}
|\Gamma|\lesssim \mu^{-1}\Gamma_0+\mu^{-1}\int_0^\tau \mu \beta^2\,
ds+\mu^{-1}\int_0^\tau \mu \beta^{3} ds P(M,A).
\end{array}
\end{equation*}
For our purpose, it is sufficient to use the less sharp inequality
\begin{equation*}
|\Gamma|\lesssim\mu^{-1}\Gamma(0)+\mu^{-1}\int_0^\tau \mu \beta^2\,
ds [1+\beta(0) P(M,A)].
\end{equation*} The assumption that $A(\tau),\ B(\tau)\leq
\beta^{-\frac{1}{4}}(\tau)$ implies that
$a+\frac{1}{2}+\frac{1}{d-1}b\geq \frac{1}{2}.$ Thus, it is not
difficult to show that $\beta^{-2}\mu^{-1}\Gamma(0)\leq A(0)$ and
$\beta^{-2}\mu^{-1}\int_0^\tau \mu \beta^2\, ds$ are bounded and
hence
$$A\lesssim A(0)+1+\beta(0)P(M,A)$$ which is (~\ref{eq:A}).
\end{proof}

\section{Rescaling of Fluctuations on a Fixed Time Interval}\label{SEC:Rescale}
We return to our key equation (~\ref{eq:xi}). In this section we
re-parameterize the unknown function $\xi(y,\tau)$ in such a way
that the $y^{2}$-term in the linear part of the new equation has a
time-independent coefficient (cf ~\cite{DGSW}).

Let $t(\tau)$ be the inverse function to $\tau(t)$, where
$\tau(t)=\int_{0}^{t}\lambda^{-2}(s)ds$ for any $\tau\geq 0.$ Pick
$T>0$ and approximate $\lambda(t(\tau))$ on the interval $0\leq
\tau\leq T$ by the new trajectory, $\lambda_{1}(t(\tau)),$ tangent
to $\lambda(t(\tau))$ at the point $\tau=T:$
$\lambda_{1}(t(T))=\lambda(t(T)),$ and
$\alpha:=-\lambda_{1}(t(\tau))\partial_{t}\lambda_{1}(t(\tau))=a(T)$
where, recall
$a(\tau):=-\lambda(t(\tau))\partial_{t}\lambda(t(\tau)).$ Now we
introduce the new independent variables $z$ and $\sigma$ as
$z(x,t):=\lambda^{-1}_{1}(t)x$ and
$\sigma(t):=\int_{0}^{t}\lambda^{-2}_{1}(s)ds$ and the new unknown
function $\eta(z,\sigma)$ as
\begin{equation}\label{NewFun}
\lambda_{1}(t)\exp{\frac{\alpha}{4}z^{2}}\eta(z,\sigma):=\lambda(t)\exp{\frac{a(\tau)}{4}y^{2}}\xi(y,\tau).
\end{equation}
In this relation one has to think of the variables $z$ and $y$,
$\sigma$, $\tau$ and $t$ as related by
$z=\frac{\lambda(t)}{\lambda_{1}(t)}y,$
$\sigma(t):=\int_{0}^{t}\lambda_{1}^{-2}(s)ds$ and
$\tau=\int_{0}^{t}\lambda^{-2}(s)ds,$ and moreover
$a(\tau)=-\lambda(t(\tau))\partial_{t}\lambda(t(\tau))$ and
$\alpha=a(T).$

For any $\tau=\int_{0}^{t(\tau)}\lambda^{-2}(s)ds$ with $t(\tau)\leq
t(T)$ (or equivalently $\tau\leq T$) we define a new function
$\sigma(\tau):=\int_{0}^{t(\tau)}\lambda_{1}^{-2}(s)ds.$ Observe the
function $\sigma$ is invertible, we denote by $\tau(\sigma)$ as its
inverse. We define
\begin{equation}\label{T2}
S:=\int_{0}^{t(T)}\lambda_{1}^{-2}(s)ds.
\end{equation}
The new function $\eta$ satisfies the equation
\begin{equation}\label{eq:eta}
\frac{d}{d\sigma}\eta(\sigma)=-\mathcal{L}_{\alpha}\eta(\sigma)+\mathcal{W}(a,b)\eta(\sigma)+\mathcal{F}(a,b)(\sigma)+\mathcal{N}_{1}(a,b,\eta)+\mathcal{N}_{2}(a,b,\eta)
\end{equation}
with the operators
$$\mathcal{L}_{\alpha}:= L_{\alpha}+V$$
where
$$ L_{\alpha}:=-\partial_{z}^{2}+\frac{\alpha^{2}}{4}z^{2}-\frac{3\alpha}{2},$$
$$V:=-\frac{2 (d-1) \alpha}{2(d-1)+\beta(\tau(\sigma)) z^{2}},$$ and
$$\mathcal{W}(a,b):=-\frac{\lambda^{2}}{\lambda_{1}^{2}}\frac{(d-1)(a+\frac{1}{2})}{2(d-1)+b(\tau(\sigma))y^{2}}+\frac{2(d-1)\alpha}{2(d-1)+\beta(\tau(\sigma)) z^{2}},$$
with the function
$$\mathcal{F}(a,b):= \exp{-\frac{\alpha}{4}z^{2}}\exp{\frac{a}{4}y^{2}}\frac{\lambda_{1}}{\lambda}F,$$
and with the nonlinear terms
\begin{equation}\label{eq:DefNonlin}
\begin{array}{lll}
\mathcal{N}_{1}(a,b,\eta)
&:=&\frac{\lambda_{1}}{\lambda}\exp{-\frac{\alpha}{4}z^{2}}e^{\frac{a}{4} y^{2}} N_{1}(a,b,\xi),\\
& &\\
\mathcal{N}_{2}(a,b,\eta)&:=&\frac{\lambda_{1}}{\lambda}\exp{-\frac{\alpha}{4}z^{2}}e^{\frac{a}{4}
y^{2}} N_{2}(a,b,\xi)
\end{array}
\end{equation}
where, recall $F$, $N_{1}$ and $N_{2}$ are defined after
(~\ref{eq:scource}) and where $\tau$ and $y$ are expressed in terms
of $\sigma$ and $z.$ In the next proposition we prove that the new
trajectory is a good approximation of the old one.
\begin{proposition}\label{NewTrajectory}
For any $\tau\leq T$ we have that if $A(\tau)\leq \beta^{-
\frac{1}{4} }(\tau)$ then
\begin{equation}\label{eq:compare}
|\frac{\lambda}{\lambda_{1}}(t(\tau))-1|\lesssim \beta(\tau)
\end{equation}
for some constant $c$ independent of $\tau$.
\end{proposition}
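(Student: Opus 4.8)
The plan is to estimate the ratio $\lambda/\lambda_1$ by comparing the two defining ODEs for $\lambda$ and $\lambda_1$ and integrating. Recall that $a(\tau) = -\lambda(t(\tau))\partial_t\lambda(t(\tau))$ so that $\partial_\tau(\lambda^2) = -2\lambda^2 a$, hence $\partial_\tau \ln \lambda^2 = -2a(\tau)$; similarly, since $\alpha = a(T)$ is constant along the $\lambda_1$-trajectory, $\partial_\sigma \ln \lambda_1^2 = -2\alpha$. Converting both to the common time variable $\tau$ (using $d\sigma/d\tau = \lambda^2/\lambda_1^2$, which is exactly the quantity we are trying to control, so some care is needed here — see below), I would set $\rho(\tau) := \lambda^2(t(\tau))/\lambda_1^2(t(\tau))$ and derive a differential equation for $\ln\rho$. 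Differentiating, $\partial_\tau \ln\rho = \partial_\tau\ln\lambda^2 - \partial_\tau\ln\lambda_1^2 = -2a(\tau) - (\partial_\sigma \ln\lambda_1^2)\,\frac{d\sigma}{d\tau} = -2a(\tau) + 2\alpha\,\rho(\tau)$. With the boundary condition $\rho(T) = 1$ coming from $\lambda_1(t(T)) = \lambda(t(T))$, this is a scalar ODE that I would analyze on $[0,T]$ (integrating backward from $\tau = T$).

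The key input is that $a(\tau) - \alpha = a(\tau) - a(T)$ is small. By the hypothesis $A(\tau) \le \beta^{-1/4}(\tau)$ together with the a priori bound $\Gamma = a - \tfrac12 + \tfrac{1}{d-1}b = O(\beta^2 A)$ (from the definition of $A$ in \eqref{eq:majorants}) and the relation $b(\tau) = \beta(\tau)(1 + o(1))$ (from $B(\tau) \le \beta^{-1/4}$), one gets $a(\tau) = \tfrac12 - \tfrac{1}{d-1}\beta(\tau) + O(\beta^{3/2})$. Since $\beta(\tau)$ is monotone decreasing and $\beta(\tau) \le b(0)$ is itself small, for $\tau \le T$ we have $|a(\tau) - a(T)| \lesssim |\beta(\tau) - \beta(T)| + O(\beta^{3/2}(\tau)) \lesssim \beta(\tau)$. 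This is the crucial smallness: the variation of $a$ over $[0,T]$ is of order $\beta$. Writing $\rho = 1 + r$, the ODE becomes $\partial_\tau r = -2(a(\tau) - \alpha) + 2\alpha\, r$, a linear equation in $r$ with forcing of size $O(\beta(\tau))$ and with $\alpha \approx \tfrac12 > 0$. Integrating from $T$ backward and using the integrating factor $e^{-2\alpha\tau}$, together with $r(T) = 0$, gives $|r(\tau)| \lesssim \int_\tau^T e^{-2\alpha(s - \tau)} |a(s) - \alpha|\, ds \lesssim \beta(\tau)$, where in the last step I use that the exponential kernel localizes the integral near $s = \tau$ and that $\beta$ is slowly varying (so $\beta(s) \lesssim \beta(\tau)$ for $s \ge \tau$ is false — rather $\beta(s) \le \beta(\tau)$, which is the direction we want). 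This yields \eqref{eq:compare}.

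The main obstacle I anticipate is the mild circularity: the change of variables $d\sigma/d\tau = \rho$ appears in the very ODE for $\rho$, so the derivation above is not quite a closed argument as written. I would handle this by either (i) working directly with the two explicit formulas $\lambda^2(t) = \lambda_0^2 - 2\int_0^t a(s)\,ds$ and $\lambda_1^2(t) = \lambda^2(t(T)) + 2a(T)(t(T) - t)$ in the original time variable $t$ — which sidesteps $\sigma$ entirely and reduces everything to estimating $\int$ of $a(s) - a(T)$ over a short $t$-interval — or (ii) setting up a fixed-point/continuity argument: assume $|\rho - 1| \le C\beta$ on a subinterval, feed this into the ODE to recover $|\rho - 1| \le \tfrac{C}{2}\beta$, and bootstrap. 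Approach (i) is cleaner: in original time, $\lambda^2(t) - \lambda_1^2(t) = 2\int_t^{t(T)}(a(s(t')) - a(T))\,dt'$ in an appropriate sense, and since $t(T) - t$ is itself controlled and $|a - a(T)| \lesssim \beta$, dividing by $\lambda_1^2(t) \gtrsim \lambda^2(t) \gtrsim (t(T) - t)$ produces the bound $|\lambda^2/\lambda_1^2 - 1| \lesssim \beta$, whence \eqref{eq:compare} after taking square roots. I would also need the elementary fact, already used elsewhere in the paper, that $\lambda(t) \ge \sqrt{2}\,\lambda_0$-type lower bounds hold so that no division by a vanishing quantity occurs except in the controlled limit $t \to t(T)$, where the numerator vanishes at a comparable rate.
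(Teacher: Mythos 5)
Your proposal is essentially correct, and your approach (i) is genuinely different from — and arguably cleaner than — the paper's argument. The paper works with the quantity $\frac{\lambda}{\lambda_1}-1$ directly in the $\tau$ variable, derives a nonlinear ODE $\partial_\tau\big(\tfrac{\lambda}{\lambda_1}-1\big)=2a\big(\tfrac{\lambda}{\lambda_1}-1\big)+G$ with $G$ containing quadratic and cubic terms in $\tfrac{\lambda}{\lambda_1}-1$, integrates backward from $\tau=T$ with an exponential kernel, and closes via a bootstrap: introducing the estimating function $\Lambda(\tau)=\sup_{\tau\le s\le T}\beta^{-1}(s)\big|\tfrac{\lambda}{\lambda_1}(t(s))-1\big|$ and showing $\Lambda\lesssim 1+\beta\Lambda^2+\beta^2\Lambda^3+\beta\Lambda$ together with $\Lambda(T)=0$. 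Your approach (ii) — feeding a hypothesized bound into the ODE and bootstrapping — is morally the same as this. Your approach (i), by contrast, avoids the $\tau$-ODE altogether: from $\lambda^2(t)=\lambda^2(t(T))+2\int_t^{t(T)}a(\tau(s))\,ds$ and $\lambda_1^2(t)=\lambda^2(t(T))+2\alpha\big(t(T)-t\big)$ one gets $\lambda^2(t)-\lambda_1^2(t)=2\int_t^{t(T)}\big(a(\tau(s))-\alpha\big)\,ds$, which, combined with $\lambda_1^2(t)\ge 2\alpha\big(t(T)-t\big)$ and $\sup_{[\tau,T]}|a-\alpha|\lesssim\beta(\tau)$, yields $\big|\lambda^2/\lambda_1^2-1\big|\lesssim\beta(\tau)$ in one line, with no nonlinear bootstrapping required. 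This buys a shorter and more transparent proof.

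Two caveats worth flagging. First, as you yourself note, the linear ODE $\partial_\tau r=-2(a-\alpha)+2\alpha r$ that you write in the middle of the proposal is not the correct exact equation — from $\partial_\tau\ln\rho=-2a+2\alpha\rho$ one gets $\partial_\tau r=(1+r)\big[2(\alpha-a)+2\alpha r\big]$, with quadratic and cubic terms; dropping them is exactly the gap that the paper's $\Lambda$-bootstrap (or your approach (ii)) is designed to fill, so the first part of the proposal is not rigorous on its own. Second, the key smallness $|a(\tau')-\alpha|\lesssim\beta(\tau)$ for $\tau'\in[\tau,T]$ uses the $A$-bound in (\ref{eq:majorants}) in the form $a=\tfrac12-\tfrac{1}{d-1}b+O(\beta^{7/4})$ together with the $B$-bound $b=\beta(1+O(\beta^{1/2}))$; the proposition as stated only lists $A(\tau)\le\beta^{-1/4}(\tau)$ in its hypothesis, but (as the paper's own proof does implicitly) one needs the control on $b$ as well to convert the variation of $a$ into a variation of $\beta$. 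You invoke this correctly.
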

\begin{proof} By the properties of $\lambda$ and $\lambda_{1}$ we have
\begin{equation}\label{EstLambda}
\partial_{\tau}[\frac{\lambda}{\lambda_{1}}(t(\tau))-1]
=2a(\tau)(\frac{\lambda}{\lambda_{1}}(t(\tau))-1)+G(\tau)
\end{equation} with
$$G:=\alpha-a+(\alpha-a)(\frac{\lambda}{\lambda_{1}}-1)[(\frac{\lambda}{\lambda_{1}})^{2}+
\frac{\lambda}{\lambda_{1}}+1]+a(\frac{\lambda}{\lambda_{1}}-1)^{2}[\frac{\lambda}{\lambda_{1}}+2].$$
By the definition of $A(\tau)$, if $A(\tau)\leq \beta^{- \frac{1}{4}
}(\tau)$ then
\begin{equation}\label{CauchA}
|a(\tau)-\alpha|,\ |a(\tau)-\frac{1}{2}|\lesssim \beta(\tau)
\end{equation} in the time interval
$\tau\in [0,T]$. Thus
\begin{equation}\label{Rem}
|G|\lesssim
\beta+(\frac{\lambda}{\lambda_{1}}-1)^{2}+|\frac{\lambda}{\lambda_{1}}-1|^{3}+\beta|\frac{\lambda}{\lambda_{1}}-1|.
\end{equation}
Observe that $\frac{\lambda}{\lambda_{1}}(t(\tau))-1=0$ when
$\tau=T.$ Thus Equations (~\ref{EstLambda}) can be rewritten as
\begin{equation}\label{LambdaRe}
\frac{\lambda_{1}}{\lambda}(t(\tau))-1=-\int_{\tau}^{T}e^{-\int^{s}_{\tau}2a(t)dt}G(s)ds.
\end{equation}
We claim that Equations (~\ref{CauchA}) and (~\ref{Rem}) imply
(~\ref{eq:compare}). Indeed, define an estimating function
$\Lambda(\tau)$ as
$$\Lambda(\tau):=\sup_{\tau\leq s\leq T}\beta^{-1}(s)|\frac{\lambda}{\lambda_{1}}(t(s))-1|.$$
Then (~\ref{LambdaRe}) and the assumption $A(\tau),\ B(\tau)\leq
\beta^{- \frac{1}{4} }(\tau)$ imply $2a\geq  \frac{1}{2} $ and
$$
\begin{array}{lll}
|\frac{\lambda}{\lambda_{1}}(t(\tau))-1| &\lesssim &
\int_{\tau}^{T}e^{-\frac{1}{2}(T-\tau)}[\beta(s)+\beta^{2}(s)\Lambda^{2}(\tau)+\beta^{2}(s)\Lambda(\tau)]ds\\
&\lesssim
&\beta(\tau)+\beta^{2}(\tau)\Lambda^{2}(\tau)+\beta^{3}(\tau)\Lambda^{3}(\tau)+\beta^{2}(\tau)\Lambda(\tau),
\end{array}
$$
or equivalently
$$\beta^{-1}(\tau)|\frac{\lambda}{\lambda_{1}}(t(\tau))-1|\lesssim
1+\beta(\tau)\Lambda^{2}(\tau)+\beta^{2}(\tau)\Lambda^{3}(\tau)+\beta(\tau)\Lambda(\tau).$$
Consequently by the fact that $\beta(\tau)$ and $\Lambda(\tau)$ are
decreasing functions we have $$\Lambda(\tau)\lesssim
1+\beta(\tau)\Lambda^{2}(\tau)+\beta^{2}(\tau)\Lambda^{3}(\tau)+\beta(\tau)\Lambda(\tau)$$
which together with $\Lambda(T)=0$ implies $\Lambda(\tau)\lesssim 1$
for any time $\tau\in [0,T].$ This estimate and the
definition of $\Lambda(\tau)$ imply (~\ref{eq:compare}).
\end{proof}

Now we prove lemmas which will be used in the proofs of (~\ref{eq:M30}) and (~\ref{eq:M20}). Recall the definition of the function
$v(y,\tau)$ in (~\ref{eq:definev}).
\begin{lemma}\label{Bridge} Assume all the conditions in Lemma
~\ref{LM:ESTnonlin}. Then for $\tau\leq T$ we have
\begin{equation}\label{eq:compareXiEta}
\|\exp{\frac{\alpha}{4}z^{2}}\eta(\cdot,\sigma)\|_{m,n}\lesssim
\beta^{\frac{m+n+1}{2}}(\tau(\sigma))M_{m,n}(\tau(\sigma));
\end{equation}
\begin{equation}\label{eq:nonlinearity}
\|\exp{\frac{\alpha}{4}z^{2}}\mathcal{N}_{2}(a,b,\eta)\|_{m,n}\lesssim
\beta^{\frac{m+n+2}{2}}(\tau(\sigma))P(M(T));
\end{equation}
\begin{equation}\label{eq:estF2}
\|\exp{\frac{\alpha}{4}z^{2}}\mathcal{F}(a,b)(\sigma)\|_{m,n}\lesssim
\beta^{\frac{m+n+2}{2}}(\tau(\sigma))P(M(T),A(T));
\end{equation}
\begin{equation}\label{est:termW}
\|\exp{\frac{\alpha}{4}z^{2}}\mathcal{W}\eta(\sigma)\|_{m,n}\lesssim
\beta^{\frac{m+n+2}{2}}(\tau(\sigma))P(M(T))
\end{equation} for $(m,n)=(3,0),\ (\frac{11}{10},0),\ (1,2),$ and
$(2,1)$;
\begin{equation}\label{eq:estN11}
\|\exp{\frac{\alpha}{4}z^{2}}N_{1}(a,b,\eta)\|_{m,n}\lesssim \beta^{\frac{m+n+2}{2}}P(M(T));
\end{equation} for $(m,n)=(3,0),\ (\frac{11}{10},0),\ (2,1)$;\begin{equation}\label{eq:estN12}
\|\exp{\frac{\alpha}{4}z^{2}}N_{1}(a,b,\eta)\|_{1,2}\lesssim
\beta^{2}[M_{3,0}(T)+M_{2,1}(T)]+\beta^{5/2}M_{\frac{11}{10},0}^{2}.
\end{equation}
\end{lemma}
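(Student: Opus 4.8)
The plan is to treat this as a pure translation between the $(y,\tau,\xi)$-variables and the rescaled $(z,\sigma,\eta)$-variables: by \eqref{NewFun}, \eqref{eq:DefNonlin} and the defining formulas for $\mathcal{W}$ and $\mathcal{F}$, each left-hand side equals, up to the scalar factor $\lambda/\lambda_{1}$ or $\lambda_{1}/\lambda$, the corresponding quantity in the old variables, which has already been bounded in Lemma ~\ref{LM:ESTnonlin} and in \eqref{eq:estI1I2}, \eqref{eq:estSource}. So first I would record that the change of variables is harmless: from \eqref{NewFun} and $\xi=e^{-ay^{2}/4}\phi$ one has $e^{\alpha z^{2}/4}\eta(z,\sigma)=\tfrac{\lambda}{\lambda_{1}}\phi(y,\tau)$ with $y=\tfrac{\lambda_{1}}{\lambda}z$, $\tau=\tau(\sigma)$; by Proposition ~\ref{NewTrajectory} and the smallness of $\beta(0)=b(0)$ the ratios $\tfrac{\lambda}{\lambda_{1}}(t(\tau))$ and $\tfrac{\lambda_{1}}{\lambda}(t(\tau))$ lie in $[\tfrac12,2]$ on $[0,T]$, so $\langle\tfrac{\lambda}{\lambda_{1}}y\rangle^{-m}\le 2^{m}\langle y\rangle^{-m}$ and, since $\partial_{z}=\tfrac{\lambda_{1}}{\lambda}\partial_{y}$, the dilation $y=\tfrac{\lambda_{1}}{\lambda}z$ changes the $\|\cdot\|_{m,n}$-norm of a given function by at most the bounded factor $(\lambda_{1}/\lambda)^{n}2^{m}$. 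Hence a $z$-weighted norm of any function written through $y$ is $\lesssim$ its $y$-weighted norm.

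With this in hand the first four estimates are immediate. From $e^{\alpha z^{2}/4}\eta=\tfrac{\lambda}{\lambda_{1}}\phi$ and the definition \eqref{eq:majorants} of $M_{m,n}$ one gets \eqref{eq:compareXiEta}. From \eqref{eq:DefNonlin} and the formula for $\mathcal{F}$, $e^{\alpha z^{2}/4}\mathcal{N}_{j}(a,b,\eta)=\tfrac{\lambda_{1}}{\lambda}e^{ay^{2}/4}N_{j}(a,b,\xi)$ and $e^{\alpha z^{2}/4}\mathcal{F}(a,b)=\tfrac{\lambda_{1}}{\lambda}e^{ay^{2}/4}F(a,b)$, so \eqref{eq:nonlinearity} follows from \eqref{eq:estN2}, \eqref{eq:estF2} from \eqref{eq:estSource}, \eqref{eq:estN11} from \eqref{eq:estN1}, and \eqref{eq:estN12} from \eqref{eq:mn12}. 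At the end one replaces $M_{m,n}(\tau(\sigma))$ and $A(\tau(\sigma))$ by $M_{m,n}(T)$ and $A(T)$, which is valid because $\tau(\sigma)\le T$, the functions $M_{m,n}$ and $A$ are nondecreasing in $\tau$ (suprema over growing intervals), and $P$ is monotone nondecreasing.

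The only step requiring genuine work is the multiplier bound \eqref{est:termW}. I would write $\mathcal{W}=\tfrac{N_{2}}{D_{2}}-\tfrac{N_{1}}{D_{1}}$ with $N_{1}:=(d-1)(a+\tfrac12)\tfrac{\lambda^{2}}{\lambda_{1}^{2}}$, $N_{2}:=2(d-1)\alpha$, $D_{2}:=2(d-1)+\beta z^{2}$, $D_{1}:=2(d-1)+\tilde{\beta}z^{2}$ with $\tilde{\beta}:=b(\lambda_{1}/\lambda)^{2}$ (so $by^{2}=\tilde{\beta}z^{2}$). From $A,B\le\beta^{-1/4}$ and Proposition ~\ref{NewTrajectory} one has $|a-\tfrac12|\lesssim\beta$, $|b-\beta|\lesssim\beta^{3/2}$, $|\lambda^{2}/\lambda_{1}^{2}-1|\lesssim\beta$ and $|\alpha-\tfrac12|\lesssim\beta(T)\le\beta(\tau(\sigma))$, so $N_{1}-(d-1),N_{2}-(d-1)=\O{\beta}$, $\tilde{\beta}-\beta=\O{\beta^{3/2}}$, and $D_{1}\simeq D_{2}\simeq 2(d-1)+\beta z^{2}$. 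Writing $\mathcal{W}=\tfrac{N_{2}-N_{1}}{D_{1}}+\tfrac{N_{2}(D_{1}-D_{2})}{D_{1}D_{2}}$, using $|D_{1}-D_{2}|\lesssim\beta^{3/2}z^{2}$ and the elementary bounds $\tfrac{z^{2}}{(2(d-1)+\beta z^{2})^{2}}\lesssim\beta^{-1}$, $\tfrac{z}{(2(d-1)+\beta z^{2})^{2}}\lesssim\beta^{-1/2}$, one gets $|\mathcal{W}|\lesssim\beta^{1/2}$, $|\partial_{z}\mathcal{W}|\lesssim\beta^{3/2}\tfrac{z}{(2(d-1)+\beta z^{2})^{2}}\lesssim\beta$, and a comparable bound for $\partial_{z}^{2}\mathcal{W}$ that is $\lesssim\beta^{3/2}$ and decays like $\beta^{-2}\langle z\rangle^{-4}$ once $\beta z^{2}\ge 1$. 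Finally, for each $(m,n)$ I would expand $\partial_{z}^{n}\big(\mathcal{W}\,\tfrac{\lambda}{\lambda_{1}}\phi\big)$ by Leibniz: terms with every $z$-derivative on $\phi$ are controlled by $\|\mathcal{W}\|_{\infty}\|\phi\|_{m,n}\lesssim\beta^{1/2}\beta^{(m+n+1)/2}M_{m,n}$, while for terms carrying a derivative on $\mathcal{W}$ one keeps the polynomial $z$-decay of $\partial_{z}\mathcal{W},\partial_{z}^{2}\mathcal{W}$ and pairs it against the at most $\langle z\rangle^{3}$ growth of $\phi$ (via $\|\phi\|_{3,0}$) and the at most $\langle z\rangle^{2}$ growth of $\partial_{z}\phi$ (via $\|\phi\|_{2,1}$), using weighted versions of the elementary bounds such as $\langle z\rangle^{4}\tfrac{z}{(2(d-1)+\beta z^{2})^{2}}\lesssim\beta^{-2}$; every term is then $\lesssim\beta^{(m+n+2)/2}P(M(T))$. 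The main obstacle is exactly this: one cannot bound $\partial_{z}\mathcal{W}$ and $\phi$ separately in $L^{\infty}$ and multiply without losing powers of $\beta$, so the near-cancellation of the two fractions in $\mathcal{W}$ — which makes every $z$-derivative of $\mathcal{W}$ both smaller and fast-decaying — must be exploited and balanced against the controlled growth of $\phi$ and its derivatives. All the rest reduces, via the harmless rescaling, to estimates already in hand.
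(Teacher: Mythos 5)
Your proof is correct and follows the same basic route as the paper: all six bounds are obtained by observing that, via \eqref{NewFun} and \eqref{eq:DefNonlin}, each quantity in the $(z,\sigma)$-picture equals (up to a bounded factor $\lambda/\lambda_1$ or its inverse) the corresponding $(y,\tau)$-quantity already estimated in Lemma~\ref{LM:ESTnonlin} and in \eqref{eq:estI1I2}, \eqref{eq:estSource}, with Proposition~\ref{NewTrajectory} guaranteeing that the dilation $y=\tfrac{\lambda_1}{\lambda}z$ is harmless for the weighted norms. One difference is in the treatment of \eqref{est:termW}: the paper simply asserts the multiplier bound $\|\exp{\tfrac{\alpha}{4}z^{2}}\mathcal{W}\eta\|_{m,n}\lesssim(\Omega_1+\Omega_2)\|\exp{\tfrac{\alpha}{4}z^{2}}\eta\|_{m,n}$ and then estimates $\Omega_1\lesssim\beta$, $\Omega_2\lesssim\beta^{1/2}$ without spelling out the cases $n\geq 1$, whereas you explicitly expand $\partial_z^n(\mathcal{W}\phi)$ by Leibniz and note that separating $\partial_z\mathcal{W}$, $\partial_z^2\mathcal{W}$ from $\phi$ in $L^{\infty}$ alone would lose $\beta$-powers, so one must exploit the near-cancellation structure of $\mathcal{W}$ (which makes its derivatives both small and polynomially decaying) and balance the resulting decay against the $\langle z\rangle^{3}$, $\langle z\rangle^{2}$ growth controlled by $\|\phi\|_{3,0}$, $\|\phi\|_{2,1}$. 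Your version fills in that step and is a genuinely more complete argument, while arriving at the same conclusion.
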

\begin{proof}
In what follows we use implicitly that
\begin{equation}\label{eq:compare2}
\frac{\lambda_{1}}{\lambda}(t(\tau))-1=O(\beta(\tau)) \
\text{and therefore}\ \frac{\lambda_{1}}{\lambda}(t(\tau)),\
\frac{\lambda}{\lambda_{1}}(t(\tau))\leq 2
\end{equation} implied by (~\ref{eq:compare}), and from which $\langle z\rangle^{-n}\lesssim \langle y\rangle^{-n}$, $n=1,2,3$.

Recall that $e^{\frac{a}{4}y^{2}}\xi=\phi$ after
(~\ref{eqn:split3}). By the definitions of $\eta$ and $M_{m,n}$ in
(~\ref{NewFun}) and (~\ref{eq:majorants}) we have
$$\|\exp{\frac{\alpha}{4}z^{2}}\eta(\sigma)\|_{m,n}\lesssim \|\phi(\tau(\sigma))\|_{m,n}\lesssim
\beta^{\frac{m+n+1}{2}}(\tau(\sigma))M_{m,n}(\tau(\sigma))$$ which
is (~\ref{eq:compareXiEta}).

The relation between $\mathcal{N}_{2}$ and $N_{2}$ in
(~\ref{eq:DefNonlin}) and the estimates of $N_{2}$ in
(~\ref{eq:estN2}) imply
$$\|\exp{\frac{\alpha}{4}z^{2}}\mathcal{N}_{2}(a,b,\eta)\|_{m,n}\lesssim \|\exp{\frac{ay^{2}}{4}}N_{2}\|_{m,n}\lesssim \beta^{\frac{m+n+2}{2}}(\tau(\sigma))P(M(T))$$
which is (~\ref{eq:nonlinearity}). Similarly we prove
(~\ref{eq:estN11}) and (~\ref{eq:estN12}).

By the definition of $\mathcal{F}$ and the estimate of $F$ in
(~\ref{eq:estSource}) we have
$$
\begin{array}{lll}
\|\exp{\frac{\alpha}{4}z^{2}}\mathcal{F}(a,b)(\sigma)\|_{m,n}\lesssim
\|e^{\frac{a}{4}y^{2}}F(a,b)(\tau(\sigma))\|_{m,n}
\lesssim
\beta^{\frac{m+n+2}{2}}(\tau(\sigma))P(M(T),A(T))
\end{array}
$$ which is (~\ref{eq:estF2}).

Now we prove (~\ref{est:termW}). Equation (~\ref{eq:compareXiEta})
and the fact $y=\frac{\lambda(t)}{\lambda_{1}(t)}z$ after
(~\ref{NewFun}) yield
$$
\begin{array}{lll}
\|\exp{\frac{\alpha}{4}z^{2}}\mathcal{W}\eta(\sigma)\|_{m,n}\lesssim
[\Omega_{1}+\Omega_{2}]\|\exp{\frac{\alpha}{4}z^{2}}\eta(\sigma)\|_{m,n} \lesssim
\beta^{\frac{m+n+1}{2}}(\tau(\sigma))M_{m,n}(T)[\Omega_{1}+\Omega_{2}]
\end{array}
$$ with $$\Omega_{1}:=|\frac{\lambda}{\lambda_{1}}-1|+|a(\tau(\sigma))-\alpha|,\ \Omega_{2}:=|\frac{b(\tau(\sigma))-\beta(\tau(\sigma))}{\beta(\tau(\sigma))}|.$$
Equations (~\ref{eq:compare2}) and (~\ref{CauchA}) imply that
$\Omega_{1}\lesssim \beta;$ the assumption on $B$ and its definition
imply $\Omega_{2}\lesssim \beta^{ \frac{1}{2} }(\tau(\sigma)).$
Consequently
$$\Omega_{1}+\Omega_{2}\lesssim \beta^{ \frac{1}{2} }(\tau(\sigma)).$$ Collecting the
estimates above we have (~\ref{est:termW}).
\end{proof}

\begin{lemma}
For any $c_{1},c_{2}>0$ there exists a constant $c(c_{1},c_{2})$
such that
\begin{equation}\label{INT}
\int_{0}^{S}\exp{-c_{1}(S-s)}\beta^{c_{2}}(\tau(s))ds\leq
c(c_{1},c_{2})\beta^{c_{2}}(T).
\end{equation}
\end{lemma}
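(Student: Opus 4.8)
The plan is to transfer the integral from the rescaled time $\sigma$ (the variable $s$ in \eqref{INT}) back to the time $\tau$, where $\beta$ is the explicit elementary function \eqref{FunBTau}, and then to exploit the fact that the exponential weight decays far faster than $\beta^{c_2}$ can grow backward in time. First I would record the relation between $\sigma$ and $\tau$: since $\sigma(\tau)=\int_0^{t(\tau)}\lambda_1^{-2}(s')\,ds'$ while $\tau=\int_0^{t(\tau)}\lambda^{-2}(s')\,ds'$, we have $\frac{d\sigma}{d\tau}=\frac{\lambda^2}{\lambda_1^2}(t(\tau))$. By Proposition~\ref{NewTrajectory} together with $\beta(\tau)\le b(0)\le\epsilon_0\ll 1$, this derivative lies in $[\tfrac12,2]$ on $[0,T]$; in particular $\tau\mapsto\sigma(\tau)$ is an increasing bijection of $[0,T]$ onto $[0,S]$ (so $\tau(S)=T$) and $S-\sigma(\tau)=\int_\tau^T\frac{\lambda^2}{\lambda_1^2}\,d\tau'\ge\tfrac12(T-\tau)$. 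Substituting $s=\sigma(\tau)$ in \eqref{INT}, so that $ds\le 2\,d\tau$ and $e^{-c_1(S-s)}\le e^{-\frac{c_1}{2}(T-\tau)}$, gives
$$\int_0^S e^{-c_1(S-s)}\beta^{c_2}(\tau(s))\,ds\ \le\ 2\int_0^T e^{-\frac{c_1}{2}(T-\tau)}\beta^{c_2}(\tau)\,d\tau .$$

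Next I would use the explicit form of $\beta$. Writing $A_0:=\frac{d-1}{b(0)}$, so that $\beta(\tau)=\frac{d-1}{A_0+\tau}$ by \eqref{FunBTau}, and noting $A_0\ge 1$ (which holds since $b(0)\le\epsilon_0$ is small and $d\ge2$), we have for $0\le\tau\le T$
$$\frac{\beta(\tau)}{\beta(T)}=\frac{A_0+T}{A_0+\tau}=1+\frac{T-\tau}{A_0+\tau}\le 1+(T-\tau),$$
hence $\beta^{c_2}(\tau)\le\beta^{c_2}(T)\,(1+T-\tau)^{c_2}$. Inserting this and changing variables to $u=T-\tau$ yields
$$2\int_0^T e^{-\frac{c_1}{2}(T-\tau)}\beta^{c_2}(\tau)\,d\tau\ \le\ 2\beta^{c_2}(T)\int_0^\infty e^{-\frac{c_1}{2}u}(1+u)^{c_2}\,du\ =\ c(c_1,c_2)\,\beta^{c_2}(T),$$
with $c(c_1,c_2):=2\int_0^\infty e^{-\frac{c_1}{2}u}(1+u)^{c_2}\,du<\infty$, which is \eqref{INT}.

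There is no genuine obstacle here; the lemma is a soft consequence of (a) the comparability of $\sigma$-time and $\tau$-time and (b) the slow polynomial decay of $\beta$. The only two points requiring care are: first, the comparison $\frac12(T-\tau)\le S-\sigma(\tau)\le 2(T-\tau)$, which is where Proposition~\ref{NewTrajectory} enters — so this lemma is tacitly invoked only on intervals $[0,T]$ where the a priori bound $A(\tau)\le\beta^{-1/4}(\tau)$ holds; and second, checking that the constant depends only on $c_1,c_2$ (and $d$), which works because $A_0=(d-1)/b(0)$ is bounded below uniformly over the admissible range of $b(0)$, making $\beta(\tau)/\beta(T)\le 1+(T-\tau)$ uniform. (One can even avoid the auxiliary bound $A_0\ge 1$ by keeping $A_0$ and using $\int_0^\infty e^{-\frac{c_1}{2}u}(1+u/A_0)^{c_2}\,du\le\int_0^\infty e^{-\frac{c_1}{2}u}(1+u)^{c_2}\,du$ whenever $A_0\ge1$, or by absorbing $A_0$ into the constant otherwise.)
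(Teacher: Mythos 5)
Your proof is correct and follows essentially the same route as the paper's: both hinge on the two-sided comparability of the $\sigma$- and $\tau$-times (coming from Proposition~\ref{NewTrajectory}) combined with the slow, rational decay of $\beta$ against the exponential weight. The paper uses the pointwise comparison $4\sigma\ge\tau(\sigma)\ge\sigma/4$ to replace $\beta(\tau(\sigma))$ by a decreasing function of $\sigma$ and then converts $S$ to $T$ at the end, whereas you change the integration variable from $\sigma$ to $\tau$ up front and work with the explicit ratio $\beta(\tau)/\beta(T)\le 1+(T-\tau)$; this is a cosmetic difference, not a different argument.
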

\begin{proof}
By the definition of $\tau(\sigma)$ we have that
$\sigma=\int_{0}^{t(\tau)}\lambda_{1}^{-2}(s)ds$ and
$\tau=\tau(\sigma)=\int_{0}^{t(\tau)}\lambda^{-2}(k)dk.$ By
(~\ref{eq:compare2}) we have
\begin{equation}\label{TauS1}
4\sigma\geq \tau(\sigma)\geq \frac{1}{4}\sigma
\end{equation} which
implies $\frac{1}{\frac{1}{b_{0}}+\frac{\tau(\sigma)}{d-1}}\leq
4\frac{1}{\frac{1}{b_{0}}+\frac{\sigma}{d-1}},$ which in turn gives
\begin{equation}\label{InI2}
\int_{0}^{S}\exp{-c_{1}(S-s)}\beta^{c_{2}}(\tau(s))ds\leq
c(c_{1},c_{2})\frac{1}{(\frac{1}{b_{0}}+\frac{S}{d-1})^{c_{2}}}.
\end{equation} Using
(~\ref{TauS1}) again we obtain $4S\geq \tau(S)=T\geq \frac{1}{4}S$
which together with (~\ref{InI2}) implies that
$$\int_{0}^{S}\exp{-c_{1}(S-s)}\beta^{c_{2}}(\tau(s))ds\leq
c(c_{1},c_{2})\frac{1}{(\frac{1}{b_{0}}+\frac{T}{d-1})^{c_{2}}}\lesssim
c(c_{1},c_{2})\beta^{c_{2}}(T).$$ Hence
$$\int_{0}^{S}\exp{-c_{1}(S-s)}\beta^{c_{2}}(\tau(s))ds\leq
c(c_{1},c_{2})\beta^{c_{2}}(T)$$ which is (~\ref{INT}).
\end{proof}

We consider the spectrum of the operator
$\mathcal{L}_{\alpha}$. Due to the quadratic term $\frac{1}{4}
\alpha z^2$, the operator $\mathcal{L}_{\alpha}$ has a
discrete spectrum. For $\beta z^{2}\ll 1$ it is closed to the
harmonic oscillator Hamiltonian
\begin{equation}
L_\alpha-\alpha:=-\partial_z^2+\frac{1}{4} \alpha^2
z^2-\frac{5\alpha}{2}.
\end{equation}
The spectrum of the operator $L_{\alpha}-\alpha$ is
\begin{equation}
\sigma({L}_\alpha-\alpha)=\left\{n \alpha |\
n=-2,-1,0,1,\ldots\right\}.
\end{equation}
Thus it is essential that we solve the evolution equation
(~\ref{eq:eta}) on the subspace orthogonal to the first three
eigenvectors of $L_{\alpha}.$ These eigenvectors, normalized, are
\begin{align}\label{eq:eigenvectors}
\phi_{0,\alpha}(z):=(\frac{\alpha}{2\pi})^\frac{1}{4}
\exp{-\frac{\alpha}{4}z^2},\
\phi_{1,\alpha}(z):=(\frac{\alpha}{2\pi})^{\frac{1}{4}}\sqrt{\alpha}z
\exp{-\frac{\alpha}{4}z^2},
\phi_{2,\alpha}(z):=(\frac{\alpha}{8\pi})^{\frac{1}{4}}(1-\alpha
z^2)\exp{-\frac{\alpha}{4}z^2}.
\end{align}
We define the orthogonal projection $\overline{P}^{\alpha}_{n}$ onto
the space spanned by the first $n$ eigenvectors of $L_{\alpha}$,
\begin{equation}\label{eq:projection}
\overline{P}^{\alpha}_{ n}=\displaystyle\sum_{m=0}^{n-1}|
\phi_{m,\alpha} \rangle \langle \phi_{m,\alpha} |
\end{equation} and the orthogonal projection $$P^{\alpha}_{n}:=1-\overline{P}^{\alpha}_{n}, n=1,2,3.$$

The following lemma establishes a relation between the functions
$\phi:=e^{\frac{ay^{2}}{4}}\xi$ and $\eta$ at the times $\sigma=S$,
$\tau=T$.
\begin{lemma} If $m+n\leq 3$ and $l\geq 0$ then
\begin{equation}\label{eq:AgreeEnd}
\langle z\rangle^{-l}\exp{\frac{\alpha
z^{2}}{4}}P^{\alpha}_{m}(\partial_{z}+\frac{\alpha}{2}z)^{n}\eta(z,S)=\langle
y\rangle^{-l}\partial^{n}_{y}\phi(y,T).
\end{equation}
\end{lemma}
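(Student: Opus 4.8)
The plan is to evaluate the whole identity at the single instant $\sigma = S$ (equivalently $\tau = T$), where by construction the two rescalings agree, and then to reduce \eqref{eq:AgreeEnd} to a one-line conjugation formula for the harmonic-oscillator lowering operator $\partial_z + \frac{\alpha}{2}z$ together with the observation that the spectral projection $P^\alpha_m$ acts trivially. First I would record the endpoint relation: by the choice of the approximating trajectory $\lambda_1$ we have $\lambda_1(t(T)) = \lambda(t(T))$, $\alpha = a(T)$ and, by \eqref{T2}, $\sigma(T) = S$; hence at $\tau = T$ the spatial variables coincide, $z = \frac{\lambda}{\lambda_1}y = y$, and the defining relation \eqref{NewFun} collapses to $\eta(z,S) = \xi(z,T)$. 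Since $\phi = e^{\frac{a}{4}y^2}\xi$ and $a(T) = \alpha$, this reads
\[
e^{\frac{\alpha}{4}z^2}\eta(z,S) = \phi(z,T), \qquad z = y .
\]

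Next I would invoke the elementary identity
\[
e^{\frac{\alpha}{4}z^2}\bigl(\partial_z + \tfrac{\alpha}{2}z\bigr)^{n} e^{-\frac{\alpha}{4}z^2} = \partial_z^{\,n},
\]
valid for every $n\ge 0$; it follows from the one-line computation $\bigl(\partial_z + \tfrac{\alpha}{2}z\bigr)\bigl(e^{-\frac{\alpha}{4}z^2}g\bigr) = e^{-\frac{\alpha}{4}z^2}\partial_z g$ and induction on $n$. Writing $\eta(\cdot,S) = e^{-\frac{\alpha}{4}z^2}\phi(\cdot,T)$ and applying this gives
\[
e^{\frac{\alpha}{4}z^2}\bigl(\partial_z + \tfrac{\alpha}{2}z\bigr)^{n}\eta(z,S) = \partial_z^{\,n}\phi(z,T) = \partial_y^{\,n}\phi(y,T), \qquad z = y,
\]
after which multiplying by $\langle z\rangle^{-l} = \langle y\rangle^{-l}$ is automatic. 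Thus \eqref{eq:AgreeEnd} is reduced to showing that $P^\alpha_m$ acts as the identity on $\bigl(\partial_z + \frac{\alpha}{2}z\bigr)^{n}\eta(\cdot,S)$.

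To dispose of the projection I would use the ladder structure. Put $a_- := \partial_z + \frac{\alpha}{2}z$, the annihilation operator for $-\partial_z^2 + \frac{\alpha^2}{4}z^2$; its $L^2(\R,dz)$-adjoint is the creation operator $a_+ = -\partial_z + \frac{\alpha}{2}z$, and $a_+$ carries each eigenvector $\phi_{k,\alpha}$ of \eqref{eq:eigenvectors} to a multiple of $\phi_{k+1,\alpha}$, so $a_+^{\,n}\phi_{k,\alpha}$ is a multiple of $\phi_{k+n,\alpha}$. For $0\le k\le m-1$ and $m+n\le 3$ one has $k+n\le m+n-1\le 2$, so $a_+^{\,n}\phi_{k,\alpha}$ lies in the span of $\phi_{0,\alpha},\phi_{1,\alpha},\phi_{2,\alpha}$. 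But $\eta(\cdot,S)=\xi(\cdot,T)$ is orthogonal to $\phi_{0,\alpha}$ and $\phi_{2,\alpha}$ by the orthogonality conditions of \eqref{eqn:split3} at $\tau=T$ (where $a(T)=\alpha$), and orthogonal to the odd function $\phi_{1,\alpha}$ because $\eta(\cdot,S)$ is even (the datum, and hence $v$ and everything built from it, is even in $x$, thus in $y$ and $z$). Therefore $\langle a_-^{\,n}\eta(\cdot,S),\phi_{k,\alpha}\rangle = \langle\eta(\cdot,S),a_+^{\,n}\phi_{k,\alpha}\rangle = 0$ for $k=0,\dots,m-1$, i.e. $\overline{P}^\alpha_m a_-^{\,n}\eta(\cdot,S)=0$ and $P^\alpha_m a_-^{\,n}\eta(\cdot,S)=a_-^{\,n}\eta(\cdot,S)$. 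Combining the three steps yields \eqref{eq:AgreeEnd}.

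There is no real obstacle here: the statement is essentially bookkeeping at a single time slice. The two points that need genuine care are (i) checking that the two rescalings actually coincide at $\tau=T$, so that $z=y$ there and $\eta(\cdot,S)=\xi(\cdot,T)$, and (ii) matching the index inequality $k+n\le 2$ with the evenness of $\eta(\cdot,S)$, which is precisely what lets the two orthogonality conditions satisfied by $\xi$ annihilate all three of $\phi_{0,\alpha},\phi_{1,\alpha},\phi_{2,\alpha}$ once the raising operators have been applied.
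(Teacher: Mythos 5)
Your proof is correct and takes essentially the same route as the paper: both reduce the identity to the endpoint $\sigma=S$, $\tau=T$ (where $z=y$, $\alpha=a(T)$, and $\eta(\cdot,S)=\xi(\cdot,T)$), apply the conjugation identity for $\partial_z+\frac{\alpha}{2}z$, and then show $P^\alpha_m$ acts as the identity via the orthogonality of $\xi(\cdot,T)$ to $\phi_{0,\alpha},\phi_{1,\alpha},\phi_{2,\alpha}$. Your phrasing through the adjoint creation operator $a_+$ raising the Hermite index by $n$ is precisely the dual of the paper's integration-by-parts step, so the two arguments are the same computation, with yours being a touch more explicit about invoking evenness to dispose of the $\phi_{1,\alpha}$ component.
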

\begin{proof}
By the various definitions in (~\ref{NewFun}) we have
$\lambda_{1}(t(T))=\lambda(t(T)),\ a(T)=\alpha,$ and hence $z=y$
and, $e^{\frac{\alpha}{4}z^{2}}\eta(z,S)=\phi(y,T).$ Thus by the
fact
$e^{\frac{\alpha}{4}z^{2}}(\partial_{z}+\frac{\alpha}{2}z)e^{-\frac{\alpha}{4}z^{2}}=\partial_{z}$
we have
\begin{equation}\label{eq:derivPro}
\langle z\rangle^{-l}\exp{\frac{\alpha
z^{2}}{4}}P^{\alpha}_{m}(\partial_{z}+\frac{\alpha}{2}z)^{n}\eta(z,S)=\langle
y\rangle^{-l}\exp{\frac{a(T) y^{2}}{4}}P^{a(T)}_{m}\exp{-\frac{a(T)
y^{2}}{4}}\partial_{y}^{n} \phi(y,T).
\end{equation} By a standard integrating by part technique, the condition
$\xi(\cdot,\tau)\perp \phi_{k,a(\tau)}, \ k=0,1,2,$ and the
definitions of $\phi_{k,a}$ above yield
$$
\exp{-\frac{a(\tau) y^{2}}{4}}\partial^{n}_{y}\phi(\cdot,\tau)\perp
\phi_{k,a(\tau)}, \ 0\leq k\leq 2-n,
$$ i.e. $P^{a(T)}_{m}\exp{-\frac{a(T)
y^{2}}{4}}\partial_{y}^{n} \phi(y,T)=\exp{-\frac{a(T)
y^{2}}{4}}\partial_{y}^{n} \phi(y,T)$ if $m+n\leq 3.$ This together
with (~\ref{eq:derivPro}) implies (~\ref{eq:AgreeEnd}).
\end{proof}

The following proposition provides various decay estimates on the
propagators generated by $- L_{\alpha}$ and
$-\mathcal{L}_{\alpha}$.
\begin{proposition}\label{PRO:propagator}
For any function $g$ and times $\tau,\ \sigma$ with $\tau\geq
\sigma\geq 0$ we have
\begin{equation}\label{eq:estproject2}
\|\langle z\rangle^{-n}\exp{\frac{\alpha}{4}z^{2}}\exp{-
L_{\alpha}\sigma} P^{\alpha}_{2} g\|_{\infty}\lesssim
\exp{(1-n)\alpha\sigma}\|\langle
z\rangle^{-n}\exp{\frac{\alpha}{4}z^{2}}g\|_{\infty}
\end{equation} with $2\geq  n\geq 1;$
and there exist constants $c_{0},\delta>0$ such that if
$\beta(0)\leq \delta$, then
\begin{equation}\label{eq:OperatorWithV}
\|\langle z\rangle^{-n}\exp{\frac{\alpha}{4}z^{2}} P^{\alpha}_{n}
U_{n}(\tau,\sigma) P^{\alpha}_{n} g\|_{\infty}\lesssim
\exp{-(c_{0}+(n-3)\alpha)(\tau-\sigma)}\|\langle
z\rangle^{-n}\exp{\frac{\alpha}{4}z^{2}}g\|_{\infty}
\end{equation} where $U_{n}(\tau,\sigma)$ denotes the propagator
generated by the operator $- P^{\alpha}_{n}
\mathcal{L}_{\alpha} P^{\alpha}_{n}$, $n=1,2,3.$
\end{proposition}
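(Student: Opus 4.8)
The plan is to treat the two estimates separately, since the first concerns the explicit harmonic‑oscillator propagator and the second a perturbation of it by the bounded, small potential $V$. For \eqref{eq:estproject2}, I would use the explicit Mehler kernel for $\exp{-L_\alpha\sigma}$. Conjugating by $\exp{\frac{\alpha}{4}z^2}$ turns $L_\alpha$ into the (non‑self‑adjoint) generator $-\partial_z^2 - \alpha z\partial_z - \alpha$, whose semigroup acts on polynomially weighted $L^\infty$ spaces by an Ornstein–Uhlenbeck–type formula: $(e^{-L_\alpha\sigma}\psi)(z) = e^{\alpha\sigma}\int K_\sigma(z,z')\psi(z')\,dz'$ where $K_\sigma$ is Gaussian in $z' - e^{-\alpha\sigma}z$ with variance $O(1-e^{-2\alpha\sigma})$. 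Since $P^\alpha_2$ projects off the eigenfunctions with eigenvalues $-2\alpha, -\alpha, 0$ (in the $L_\alpha - \alpha$ normalization, the ground modes), on the range of $P^\alpha_2$ the semigroup decays at rate at least $e^{-\alpha\sigma}$ relative to the weight‑free estimate. The weight $\langle z\rangle^{-n}$ is handled by the elementary observation that the OU flow contracts $z \mapsto e^{-\alpha\sigma}z$, so $\langle z\rangle^{n}\langle z'\rangle^{-n} \lesssim e^{n\alpha\sigma}$ along the effective support of $K_\sigma$; combined with the spectral gain $e^{-2\alpha\sigma}$ from $P^\alpha_2$ and the prefactor $e^{\alpha\sigma}$ this gives the stated $e^{(1-n)\alpha\sigma}$. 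One must check the borderline $n=1,2$ cases carefully; the Gaussian tails make the weight exchange lossless up to constants.

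For \eqref{eq:OperatorWithV}, the idea is a Duhamel/perturbative comparison of $U_n(\tau,\sigma)$ with $e^{-P^\alpha_n L_\alpha P^\alpha_n(\tau-\sigma)}$. Write $\mathcal L_\alpha = L_\alpha + V$ with $\|V\|_\infty \lesssim \alpha \lesssim 1$ and, crucially, $V(z) \to 0$ pointwise as $\beta\to 0$ away from $z=0$, with $|V(z)| \lesssim \beta z^2/(1+\beta z^2)$ bounded by a small multiple of $\alpha$ uniformly. By the first estimate, $P^\alpha_n e^{-L_\alpha s}P^\alpha_n$ decays like $e^{-(n-2)\alpha s}$ in the weighted $L^\infty$ norm (shifting the index from the $P^\alpha_2$ case). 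The operator $-\frac{3\alpha}{2}$ versus $-\frac{5\alpha}{2}$ bookkeeping in the definitions of $L_\alpha$ and $L_\alpha-\alpha$ shifts the decay rate so that the range of $P^\alpha_n$ gives bare rate $e^{-(3-n)\alpha(\tau-\sigma)}$, matching the $(n-3)\alpha$ in the exponent. Then a Duhamel iteration, $U_n = e^{-P_nL_\alpha P_n\cdot} - \int e^{-P_nL_\alpha P_n(\tau-s)}P_nVP_n U_n(s,\sigma)\,ds$, together with $\|V\|_\infty$ small (choose $\delta$ so $\beta(0)\le\delta$ forces $\|V\|$ below the spectral gap), yields a convergent Neumann series and the claimed exponential bound with a possibly slightly degraded rate $c_0 < $ the unperturbed gap — absorbing the $O(\|V\|)$ loss into $c_0$.

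The main obstacle I expect is the interplay between the polynomial weight $\langle z\rangle^{-n}$ and the time‑dependence of $V$ through $\beta(\tau(\sigma))$. Because $\beta$ depends on $\sigma$ along the trajectory, $\mathcal L_\alpha$ is genuinely nonautonomous, so $U_n(\tau,\sigma)$ is a two‑parameter propagator rather than a semigroup; the Duhamel argument must be run with the time‑ordered product and one needs a uniform (in $\sigma$) lower bound on the spectral gap of $P^\alpha_n L_\alpha P^\alpha_n$, which follows from the weighted estimate \eqref{eq:estproject2} applied at each frozen time plus the fact that $\|V(\cdot,s)\|_\infty$ is uniformly small on $[0,S]$ — here one uses $\beta(\tau(s)) \le \beta(0) \le \delta$ monotonically. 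Controlling the weighted norm of $V U_n$ requires that multiplication by $V$ is bounded on $\langle z\rangle^n L^\infty$ (it is, since $V$ is bounded and smooth), so no weight is lost in the perturbation term, and the Neumann series closes.
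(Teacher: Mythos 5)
Your overall plan — explicit Mehler/Ornstein–Uhlenbeck analysis for the free propagator, then perturbation for $U_n$ — is roughly the route behind the references the paper actually leans on (Bricmont–Kupiainen, Dejak–Gang–Sigal–Wang, Gang), whereas the paper itself disposes of the proposition by direct citation: it quotes the estimate $\|\langle z\rangle^{-n}e^{\alpha z^2/4}e^{-L_\alpha\sigma}P_n^\alpha g\|_\infty\lesssim e^{-(n-1)\sigma}\|\langle z\rangle^{-n}e^{\alpha z^2/4}g\|_\infty$ for $n=1,2$ from those works, obtains the $n=1$ case of \eqref{eq:estproject2} via the identity $P_1^\alpha P_2^\alpha=P_2^\alpha$, interpolates in $n$, and cites Proposition~10 of \cite{DGSW} for \eqref{eq:OperatorWithV}. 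For the first estimate your sketch is sound in spirit; the bookkeeping is a bit garbled (your ``spectral gain $e^{-2\alpha\sigma}$ from $P_2^\alpha$'' together with ``$\langle z\rangle^n\langle z'\rangle^{-n}\lesssim e^{n\alpha\sigma}$'' and the prefactor $e^{\alpha\sigma}$ does not combine to $e^{(1-n)\alpha\sigma}$; the weight exchange along the OU flow is a \emph{gain} $e^{-n\alpha\sigma}$, the growing modes are removed by $P_2^\alpha$, and the Gaussian kernel normalization supplies $e^{\alpha\sigma}$), but these are sign/indexing slips, not conceptual ones, and you correctly flag that the paper's interpolation in $n$ is needed to cover $1\le n\le 2$.

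The second part has a genuine gap. You assert that the potential $V=-\frac{2(d-1)\alpha}{2(d-1)+\beta z^2}$ is ``bounded by a small multiple of $\alpha$ uniformly'' once $\beta\le\delta$, write $|V(z)|\lesssim \beta z^2/(1+\beta z^2)$, and then argue that smallness of $\|V\|_\infty$ lets the Neumann series close. This is not correct: as $\beta\to 0$, $V(z)\to-\alpha$ for every fixed $z$, so $\|V\|_\infty$ is bounded \emph{below} by essentially $\alpha$, independently of how small $\beta$ is. (The expression $\beta z^2/(1+\beta z^2)$ you wrote is the shape of $V+\alpha$, not of $V$, and even $V+\alpha$ tends to $\alpha$ as $\beta z^2\to\infty$, so it too is not uniformly small.) Consequently ``choose $\delta$ so that $\|V\|$ is below the spectral gap'' cannot be arranged, and the Duhamel/Neumann-series argument as sketched does not close. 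What is actually true, and what must be used, is that $V$ is an $O(\beta)$ perturbation of the constant $-\alpha$ on the Gaussian length scale $|z|\lesssim 1$ where the propagator kernel lives; absorbing the constant $-\alpha$ into the free operator (turning $L_\alpha$ into $L_\alpha-\alpha$, which explains the shift from $(n-1)$ to $(n-3)$ in the exponent) and controlling the residual $V+\alpha$ requires exploiting the Gaussian localization of the kernel, not a crude $\|V\|_\infty$ bound. That localization step — a split into $|z|\lesssim R(\beta)$ where $V+\alpha$ is small and $|z|\gtrsim R(\beta)$ where the Gaussian tails dominate — is the real content behind the citation to \cite{DGSW}, and your sketch does not supply it.
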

\begin{proof}
By results of ~\cite{BrKu,DGSW,GaSi} we have
\begin{equation}\label{eq:inter} \|\langle
z\rangle^{-n}e^{\frac{\alpha}{4}z^{2}}e^{-
L_{\alpha}\sigma}P_{n}^{\alpha}g\|_{\infty}\lesssim
e^{-\sigma(n-1)}\|\langle
z\rangle^{-n}e^{\frac{\alpha}{4}z^{2}}g\|_{\infty},\
n=1,2.\end{equation} In particular by the estimate $n=1$,
$$\|\langle z\rangle^{-1}e^{\frac{\alpha}{4}z^{2}}e^{-
L_{\alpha}\sigma}P_{2}^{\alpha}g\|_{\infty}\lesssim \|\langle
z\rangle^{-1}e^{\frac{\alpha}{4}z^{2}}P_{2}^{\alpha}g\|_{\infty}\lesssim\|\langle
z\rangle^{-1}e^{\frac{\alpha}{4}z^{2}}g\|_{\infty}.
$$ using the fact
$P_{1}^{\alpha}P_{2}^{\alpha}=P_{2}^{\alpha}$ and the explicit form
of $P_{2}^{\alpha}$. For $2\geq n\geq 1$ we use the interpolation
technique to get Equation (~\ref{eq:estproject2}).

The case $n=3$ of (~\ref{eq:OperatorWithV}) was proved in
~\cite{DGSW}, Proposition 10 (cf ~\cite{BrKu,GaSi}). The proof of
the other cases is similar, thus is omitted.
\end{proof}
\section{Estimate of $M_{3,0}$}\label{SEC:EstM1}
In this section we prove Estimate (~\ref{eq:M30}) on the function
$M_{3,0}$. Given any time $\tau$, choose $T=\tau.$ Then we have the
estimates of Proposition ~\ref{NewTrajectory} for $\tau\leq T$. We
start from estimating $\eta$ defined in Equation (~\ref{NewFun}). We
observe that the function $\eta$ is not orthogonal to the first
three eigenvectors of the operator $L_{\alpha}.$ Therefore we derive
the equation for $P^{\alpha}_{3}\eta$:
\begin{equation}\label{EQ:eta2}
\frac{d}{d\sigma} P^{\alpha}_{3} \eta=-
P^{\alpha}_{3} \mathcal{L}_{\alpha} P^{\alpha}_{3}
\eta+
\sum_{k=1}^{5}D^{(k)}_{3,0}(\sigma)
\end{equation}
where
the functions $D^{(k)}_{m,n}\equiv D^{(k)}_{m,n}(\sigma), \
k=1,2,3,4, 5$, $(m,n)=(3,0),\ (2,0),(1,2),\ (2,1),$ are defined as
$$D^{(1)}_{m,n}:=- P^{\alpha}_{m} V\exp{-\frac{\alpha}{4}z^{2}}\partial_{z}^{n} [\exp{\frac{\alpha}{4}z^{2}}\eta]+ P^{\alpha}_{m} V P^{\alpha}_{m} \exp{-\frac{\alpha}{4}z^{2}}\partial_{z}^{n} [\exp{\frac{\alpha}{4}z^{2}}\eta],$$
$$D^{(2)}_{m,n}:= P^{\alpha}_{m} \exp{-\frac{\alpha}{4}z^{2}}\partial_{z}^{n} [\exp{\frac{\alpha}{4}z^{2}}\mathcal{W}\eta],$$
$$D^{(3)}_{m,n}:= P^{\alpha}_{m} \exp{-\frac{\alpha}{4}z^{2}}\partial_{z}^{n}[\exp{\frac{\alpha}{4}z^{2}}\mathcal{F}(a,b)],$$
$$D^{(4)}_{m,n}:= P^{\alpha}_{m} \exp{-\frac{\alpha}{4}z^{2}}\partial_{z}^{n}[\exp{\frac{\alpha}{4}z^{2}}\mathcal{N}_{1}(a,b,\alpha,\eta)],$$
$$D^{(5)}_{m,n}:= P^{\alpha}_{m} \exp{-\frac{\alpha}{4}z^{2}}\partial_{z}^{n}[\exp{\frac{\alpha}{4}z^{2}}\mathcal{N}_{2}]$$
where, recall the definitions of the function $\mathcal{F}$, the
operator $\mathcal{W}$ after Equation (~\ref{eq:eta}) and the
definition of $P_{m}^\alpha$ before (~\ref{eq:projection}).

Now we start with estimating the terms $D^{(k)}_{3,0}$,
$k=1,2,3,4,5$, on the right hand side of (~\ref{EQ:eta2}).
\begin{lemma}\label{EstDs} If $ A(\tau),\ B(\tau)\leq
\beta^{- \frac{1}{4} }(\tau)$ and if $\sigma\leq S$ (equivalently
$\tau\leq T$) then we have
\begin{equation}\label{eq:MplusN3}
\sum_{k=1}^{5}\|\exp{\frac{\alpha}{4}z^{2}}P_{3}^{\alpha}D_{3,0}^{(k)}(\sigma)\|_{3,0}\lesssim
\beta^{\frac{5}{2}}(\tau(\sigma))P(M(T),A(T)).
\end{equation}
\end{lemma}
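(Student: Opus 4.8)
The plan is to reduce each $D^{(k)}_{3,0}(\sigma)$ to a quantity already controlled in Lemma~\ref{Bridge}. Since $(m,n)=(3,0)$ makes $\partial_z^{n}$ the identity, the five terms collapse to $D^{(1)}_{3,0}=-P^{\alpha}_{3}V\overline{P}^{\alpha}_{3}\eta$ (here one uses $P^{\alpha}_{3}+\overline{P}^{\alpha}_{3}=1$), $D^{(2)}_{3,0}=P^{\alpha}_{3}\mathcal{W}\eta$, $D^{(3)}_{3,0}=P^{\alpha}_{3}\mathcal{F}(a,b)$, $D^{(4)}_{3,0}=P^{\alpha}_{3}\mathcal{N}_{1}$ and $D^{(5)}_{3,0}=P^{\alpha}_{3}\mathcal{N}_{2}$. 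First I would record one structural fact used throughout: the projection $P^{\alpha}_{3}$ is bounded on the weighted space carrying the norm $\|\exp{\frac{\alpha}{4}z^{2}}(\cdot)\|_{3,0}$. Indeed, $\overline{P}^{\alpha}_{3}u=\sum_{m=0}^{2}\langle\phi_{m,\alpha},u\rangle\phi_{m,\alpha}$, and since each $\phi_{m,\alpha}$ is a polynomial of degree $\le 2$ times $\exp{-\frac{\alpha}{4}z^{2}}$ with $\alpha=a(T)\in[1/4,1]$, one has $|\langle\phi_{m,\alpha},u\rangle|\lesssim\|\exp{\frac{\alpha}{4}z^{2}}u\|_{3,0}$ (an $L^{1}$--$L^{\infty}$ pairing against an integrable Gaussian times a polynomial, uniformly in $\alpha$) and $\|\exp{\frac{\alpha}{4}z^{2}}\phi_{m,\alpha}\|_{3,0}\lesssim 1$; hence $\|\exp{\frac{\alpha}{4}z^{2}}P^{\alpha}_{3}u\|_{3,0}\lesssim\|\exp{\frac{\alpha}{4}z^{2}}u\|_{3,0}$.

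With this, the terms $k=2,3,4,5$ are immediate: applying the boundedness of $P^{\alpha}_{3}$ and then quoting, respectively, \eqref{est:termW}, \eqref{eq:estF2}, \eqref{eq:estN11} and \eqref{eq:nonlinearity} of Lemma~\ref{Bridge}, each contributes $\|\exp{\frac{\alpha}{4}z^{2}}D^{(k)}_{3,0}\|_{3,0}\lesssim\beta^{\frac{3+0+2}{2}}(\tau(\sigma))\,P(M(T),A(T))=\beta^{5/2}(\tau(\sigma))\,P(M(T),A(T))$, with $A(T)$ entering only through the $\mathcal{F}$ term. Lemma~\ref{Bridge} applies here since its hypotheses---Condition (A), the initial conditions on $v$, and $|M|,A,B\le\beta^{-1/4}$---are the standing assumptions.

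The only term requiring an extra idea is $D^{(1)}_{3,0}$: the crude bound $|V|\le\alpha\lesssim 1$ together with $\|\exp{\frac{\alpha}{4}z^{2}}\eta\|_{3,0}\lesssim\beta^{2}(\tau(\sigma))M_{3,0}(T)$ (from \eqref{eq:compareXiEta}) only gives $\beta^{2}$, not $\beta^{5/2}$. To recover the missing $\beta^{1/2}$, I would write $V=-\alpha+\widetilde V$ with $\widetilde V:=V+\alpha=\dfrac{\alpha\,\beta\,z^{2}}{2(d-1)+\beta z^{2}}$, so $|\widetilde V|\le\alpha\min\!\big(1,\tfrac{\beta z^{2}}{2(d-1)}\big)\lesssim\beta^{1/2}\langle z\rangle$. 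Since $\overline{P}^{\alpha}_{3}$ is an orthogonal projection, $P^{\alpha}_{3}\overline{P}^{\alpha}_{3}=0$, and the constant piece of $V$ drops: $D^{(1)}_{3,0}=-P^{\alpha}_{3}\widetilde V\,\overline{P}^{\alpha}_{3}\eta$. Now $\exp{\frac{\alpha}{4}z^{2}}\overline{P}^{\alpha}_{3}\eta$ is a polynomial of degree $\le 2$ with coefficients $\lesssim\|\exp{\frac{\alpha}{4}z^{2}}\eta\|_{3,0}$, so $\|\exp{\frac{\alpha}{4}z^{2}}\widetilde V\,\overline{P}^{\alpha}_{3}\eta\|_{3,0}\lesssim\beta^{1/2}(\tau(\sigma))\|\exp{\frac{\alpha}{4}z^{2}}\eta\|_{3,0}$, and boundedness of $P^{\alpha}_{3}$ together with \eqref{eq:compareXiEta} yields $\|\exp{\frac{\alpha}{4}z^{2}}D^{(1)}_{3,0}\|_{3,0}\lesssim\beta^{5/2}(\tau(\sigma))M_{3,0}(T)\lesssim\beta^{5/2}(\tau(\sigma))P(M(T))$. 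Adding the five bounds gives \eqref{eq:MplusN3}.

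The main obstacle is thus confined to $D^{(1)}_{3,0}$: one must notice that the cancellation $P^{\alpha}_{3}\overline{P}^{\alpha}_{3}=0$ removes the unfavourable constant part of $V$, leaving only the part of $V$ vanishing at $z=0$, which supplies the decisive extra factor $\beta^{1/2}$. The rest is bookkeeping---tracking which polynomial $P$ appears and checking that all implicit constants are uniform for $\alpha\in[1/4,1]$---plus direct quotation of Lemma~\ref{Bridge}.
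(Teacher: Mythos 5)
Your proof is correct and follows essentially the same route as the paper's: for $k=2,\dots,5$ you invoke the weighted boundedness of $P_3^\alpha$ and quote \eqref{est:termW}, \eqref{eq:estF2}, \eqref{eq:estN11}, \eqref{eq:nonlinearity}, and for $k=1$ you exploit the cancellation $P_3^\alpha(\mathrm{const})\overline{P}_3^\alpha=0$ to leave only the part of $V$ vanishing at $z=0$, which supplies the extra $\beta^{1/2}$. The paper states this cancellation directly by writing $D^{(1)}_{3,0}$ in terms of $\tfrac{bz^2}{2(d-1)+bz^2}$ (with harmless coefficient discrepancies against its own definition of $V$); your decomposition $V=-\alpha+\widetilde V$ makes the same mechanism explicit and is, if anything, more consistent with the stated definition of $V$.
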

\begin{proof}
We rewrite $ P^{\alpha}_{3} D^{(1)}_{3,0},$ as
$$ P^{\alpha}_{3} D_{3,0}^{(1)}(\sigma)= P^{\alpha}_{3} \frac{\alpha+\frac{1}{2}}{2(d-1)+b(\tau(\sigma))z^{2}}b(\tau(\sigma))z^{2}(1- P^{\alpha}_{3} )\eta(\sigma)$$
which admits the estimate $$
\begin{array}{lll}
\|\exp{\frac{\alpha}{4}z^{2}} P^{\alpha}_{3}
D_{3,0}^{(1)}(\sigma)\|_{3,0}&\lesssim& |\langle
z\rangle^{-1}\frac{b(\tau(\sigma))z^{2}}{1+bz^{2}}|\| \exp{\frac{\alpha}{4}z^{2}}(1- P^{\alpha}_{3} )\eta(\sigma)\|_{2,0}\\
&\lesssim & b^{ \frac{1}{2} }(\tau(\sigma))\|\exp{\frac{\alpha}{4}z^{2}}\eta(\sigma)\|_{3,0}\\
&\lesssim & \beta^{\frac{m+n+2}{2}}(\tau(\sigma))M_{3,0}(T)
\end{array}
$$ where we use (~\ref{eq:compareXiEta}), the fact that $|b(\tau)|\leq 2\beta(\tau)$ implied by $B(\tau)\leq
\beta^{- \frac{1}{4} }(\tau)$ and the fact for any $m\geq 1$
\begin{equation}\label{eq:estPro}
\|\exp{\frac{\alpha}{4}z^{2}}(1- P^{\alpha}_{m} )g\|_{m-1,0}\lesssim
\|\exp{\frac{\alpha}{4}z^{2}}g\|_{m,0}
\end{equation} by (~\ref{eq:projection}). Thus we have the estimate for $D^{(1)}_{3,0}$.

Now we estimate $D^{(k)}_{3,0}, \ k=2,3,4,5.$ First by
(~\ref{eq:estPro}) we observe
$$\sum_{k=2}^{5}\|\exp{\frac{\alpha}{4}z^{2}}P_{3}^{\alpha}D_{3,0}^{(k)}(\sigma)\|_{3,0}\leq
\|\exp{\frac{\alpha}{4}z^{2}}\mathcal{F}(a,b)(\sigma)\|_{3,0}+\|\exp{\frac{\alpha}{4}z^{2}}\mathcal{W}\eta(\sigma)\|_{3,0}+\|\exp{\frac{\alpha}{4}z^{2}}\mathcal{N}_{1}\|_{3,0}+\|\exp{\frac{\alpha}{4}z^{2}}\mathcal{N}_{2}\|_{3,0}.$$
The estimates of $\mathcal{F},\ \mathcal{N}_{1},\ \mathcal{N}_{2}$
and $\mathcal{W}\eta$ in (~\ref{eq:nonlinearity})-(~\ref{eq:estN11})
imply
$$\sum_{k=2}^{5}\|\exp{\frac{\alpha}{4}z^{2}}P_{3}^{\alpha}D_{3,0}^{(k)}(\sigma)\|_{3,0}\lesssim
\beta^{\frac{5}{2}}(\tau(\sigma))P(M(T),A(T)).$$

Collecting the estimates above we complete the proof.
\end{proof}
Now we prove Equation (~\ref{eq:M30}). Let $S$ and $T$ be the same
as in Section ~\ref{SEC:Rescale}. By Duhamel principle we rewrite
Equation (~\ref{EQ:eta2}) as
\begin{equation}\label{eq:eta3}
 P^{\alpha}_{3} \eta(S)=P^{\alpha}_{3}U_{3}(S,0) P^{\alpha}_{3} \eta(0)+\displaystyle\sum_{n=1}^{5}\int_{0}^{S}P^{\alpha}_{3}U_{3}(S,\sigma) P^{\alpha}_{3} D_{3,0}^{(n)}(\sigma)d\sigma,
\end{equation} where, recall, $U_{3}(\tau,\sigma)$  is defined and estimated in
(~\ref{eq:OperatorWithV}), from which we obtain
\begin{equation}\label{eq:M1Ge}
\beta^{-2}(T)\|\exp{\frac{\alpha}{4}z^{2}} P^{\alpha}_{3} \eta(S)\|_{3,0}\\
\lesssim
\exp{-c_{0}S}\beta^{-2}(T)\|\exp{\frac{\alpha}{4}z^{2}}\eta(0)\|_{3,0}+\beta^{-2}(T)\displaystyle\sum_{k=1}^{5}\int_{0}^{S}\exp{-c_{0}(S-\sigma)}\|\exp{\frac{\alpha}{4}z^{2}}D^{(k)}_{3,0}(\sigma)\|_{3,0}d\sigma.
\end{equation}

Now we estimate each term on the right hand side. We begin with the
first term. By the slow decay of $\beta(\tau)$ and Equation
(~\ref{eq:compareXiEta}) we have
\begin{equation}\label{eq:estIni}
\begin{array}{lll}
\exp{-c_{0}S}\beta^{-2}(T)\|\exp{\frac{\alpha}{4}z^{2}}\eta(0)\|_{3,0} \lesssim
\beta^{-2}(0)\|\exp{\frac{\alpha}{4}z^{2}}\eta(0)\|_{3,0} \lesssim  M_{3,0}(0).
\end{array}
\end{equation}
For the second term we use the integral estimate (~\ref{INT}) and
the estimate of $D_{3,0}^{(k)}$ in Equation (~\ref{eq:MplusN3}) to
obtain
\begin{equation}\label{eq:estD30}
\displaystyle\sum_{k=1}^{5}\int_{0}^{S}\exp{-c_{0}(S-\sigma)}\|\exp{\frac{\alpha}{4}z^{2}}D^{(k)}_{3,0}(\sigma)\|_{3,0}d\sigma
\lesssim \beta^{5/2}(T)P(M(T),A(T)).
\end{equation}

By (~\ref{eq:AgreeEnd}) we have $\|\exp{\frac{\alpha}{4}z^{2}}
P^{\alpha}_{3} \eta(S)\|_{3,0}=\|
\phi(\cdot,T)\|_{3,0}$ which together with (~\ref{eq:estIni}) and
(~\ref{eq:estD30}) implies
$$
\begin{array}{lll}
& &\beta^{-2}(T)\| \phi(\cdot,T)\|_{3,0} \lesssim M_{3,0}(0)+\beta^{
\frac{1}{2} }(T)P(M(T),A(T))
\end{array}
$$ where $P$ is a nondecreasing polynomial.
By the definition of $M_{3,0}$ in (~\ref{eq:majorants}) we obtain
$$
\begin{array}{lll}
M_{3,0}(T)&\lesssim& M_{3,0}(0)+\beta^{ \frac{1}{2}
}(0)P(M(T),A(T)).
\end{array}
$$
Since $T$ is an arbitrary Equation (~\ref{eq:M30}) follows.
\begin{flushright}
$\square$
\end{flushright}
\section{Proof of Equation (~\ref{eq:M20})}\label{SEC:EstM2}
We derive an equation for $P^{\alpha}_{2} \eta(\sigma)$ from
Equation (~\ref{eq:eta}) as
\begin{equation}\label{eq:eta4}
\frac{d}{d\sigma}P^{\alpha}_{2} \eta=- L_{\alpha}P^{\alpha}_{2}
\eta-P^{\alpha}_{2}
V\eta+P_{2}^{\alpha}\sum_{k=2}^{5}D_{2,0}^{(k)}
\end{equation}
where the functions $D^{(k)}_{2,0}$ and the operator $L_{\alpha}$ are
defined after (~\ref{EQ:eta2}) and (~\ref{eq:eta}) respectively.
\begin{lemma}
If $ A(\tau),\ B(\tau)\leq \beta^{- \frac{1}{4} }(\tau)$, then
\begin{equation}\label{eq:estF3}
\|\exp{\frac{\alpha}{4}z^{2}}V\eta(\sigma)\|_{\frac{11}{10},0}\lesssim \beta^{\frac{21}{20}}(\tau(\sigma))M_{3,0}(T).
\end{equation}
\begin{equation}\label{eq:1.1}
\sum_{k=1}^{5}\|\exp{\frac{\alpha}{4}z^{2}}P_{2}^{\alpha}D_{2,0}^{(k)}(\sigma)\|_{\frac{11}{10},0}\lesssim
\beta^{\frac{31}{20}}(\tau)P(M(T),A(T))
\end{equation}
\end{lemma}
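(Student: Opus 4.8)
The plan is to prove the two inequalities separately; both rest on the elementary fact that the weight $\langle z\rangle^{-11/10}$ from the norm combines with the decaying factor $(2(d-1)+\beta z^{2})^{-1}$ inside $V$ to produce an extra power of $\beta$. Throughout, $\alpha=a(T)\in[1/4,1]$ by admissibility, so $|V(z)|\lesssim (1+\beta z^{2})^{-1}$.

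For (~\ref{eq:estF3}): writing $\exp{\frac{\alpha}{4}z^{2}}V\eta=V\cdot\exp{\frac{\alpha}{4}z^{2}}\eta$ and inserting $1=\langle z\rangle^{3}\langle z\rangle^{-3}$, we get
$$\|\exp{\tfrac{\alpha}{4}z^{2}}V\eta(\sigma)\|_{\frac{11}{10},0}\ \le\ \|\langle z\rangle^{19/10}V\|_{\infty}\;\|\exp{\tfrac{\alpha}{4}z^{2}}\eta(\sigma)\|_{3,0}.$$
A one-line maximization, with the supremum attained near $\beta z^{2}\sim 1$, gives $\|\langle z\rangle^{19/10}V\|_{\infty}\lesssim\beta^{-19/20}(\tau(\sigma))$, while (~\ref{eq:compareXiEta}) with $(m,n)=(3,0)$ together with the monotonicity of $M_{3,0}$ give $\|\exp{\frac{\alpha}{4}z^{2}}\eta(\sigma)\|_{3,0}\lesssim\beta^{2}(\tau(\sigma))M_{3,0}(T)$. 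The product is $\beta^{21/20}(\tau(\sigma))M_{3,0}(T)$, which is (~\ref{eq:estF3}); this precise exponent is what later turns, through the Duhamel convolution against $e^{-L_{\alpha}\sigma}$ in (~\ref{eq:eta4}), into the $M_{3,0}(\tau)$ term on the right of (~\ref{eq:M20}).

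For (~\ref{eq:1.1}) I split the sum into $D^{(1)}_{2,0}$ and the remaining four. For $k=2,3,4,5$ one has $D^{(k)}_{2,0}=P^{\alpha}_{2}(\mathcal{W}\eta)$, $P^{\alpha}_{2}\mathcal{F}$, $P^{\alpha}_{2}\mathcal{N}_{1}$, $P^{\alpha}_{2}\mathcal{N}_{2}$ respectively (with $n=0$, so no $z$-derivatives occur); using that $P^{\alpha}_{2}$ is bounded, i.e. $\|\exp{\frac{\alpha}{4}z^{2}}P^{\alpha}_{2}g\|_{\frac{11}{10},0}\lesssim\|\exp{\frac{\alpha}{4}z^{2}}g\|_{\frac{11}{10},0}$ (a consequence of (~\ref{eq:estPro}) and the explicit Gaussian form of $\phi_{0,\alpha},\phi_{1,\alpha}$), the bounds (~\ref{est:termW}), (~\ref{eq:estF2}), (~\ref{eq:estN11}), (~\ref{eq:nonlinearity}) of Lemma ~\ref{Bridge} at $(m,n)=(\frac{11}{10},0)$ each give $\lesssim\beta^{31/20}(\tau(\sigma))P(M(T),A(T))$. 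For the cross term $D^{(1)}_{2,0}=-P^{\alpha}_{2}V(1-P^{\alpha}_{2})\eta$ I use the algebraic splitting $V=-\alpha+\frac{\alpha\beta z^{2}}{2(d-1)+\beta z^{2}}$: the scalar $-\alpha$ is annihilated since $P^{\alpha}_{2}(1-P^{\alpha}_{2})=0$, leaving $D^{(1)}_{2,0}=P^{\alpha}_{2}\big[\frac{\alpha\beta z^{2}}{2(d-1)+\beta z^{2}}(1-P^{\alpha}_{2})\eta\big]$. Since $(1-P^{\alpha}_{2})\eta$ equals $e^{-\alpha z^{2}/4}$ times a polynomial of degree $\le 1$ whose coefficients are $\lesssim\|\exp{\frac{\alpha}{4}z^{2}}\eta\|_{3,0}$ (Cauchy--Schwarz against $\phi_{0,\alpha},\phi_{1,\alpha}$), boundedness of $P^{\alpha}_{2}$ and the elementary bound $\|\langle z\rangle^{-1/10}\frac{\beta z^{2}}{1+\beta z^{2}}\|_{\infty}\lesssim\beta^{1/20}$ give $\|\exp{\frac{\alpha}{4}z^{2}}D^{(1)}_{2,0}\|_{\frac{11}{10},0}\lesssim\beta^{1/20}\|\exp{\frac{\alpha}{4}z^{2}}\eta\|_{3,0}\lesssim\beta^{41/20}(\tau(\sigma))M_{3,0}(T)$, which is absorbed into $\beta^{31/20}P(M(T),A(T))$. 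Summing the five bounds gives (~\ref{eq:1.1}).

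The routine part is quoting Lemma ~\ref{Bridge} for $k=2,\dots,5$. The one step requiring genuine care is the commutator term $D^{(1)}_{2,0}$: a naive estimate fails because $\langle z\rangle^{19/10}\frac{\beta z^{2}}{1+\beta z^{2}}$ is unbounded, and the remedy is exactly the combination above --- peel off the scalar part of $V$, which the projections kill, and exploit that $(1-P^{\alpha}_{2})\eta$ is a Gaussian-damped low-degree polynomial, trading the dangerous polynomial growth against Gaussian decay so that only the benign factor $\langle z\rangle^{-1/10}\frac{\beta z^{2}}{1+\beta z^{2}}$ remains.
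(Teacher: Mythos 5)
Your proof is correct and takes essentially the same route as the paper. For \eqref{eq:estF3} you peel off $\langle z\rangle^{19/10}$ against $V$ and apply \eqref{eq:compareXiEta} with $(m,n)=(3,0)$, which is exactly the paper's manipulation (the paper writes the weight inequality directly as $\langle z\rangle^{-11/10}(1+b z^2)^{-1}\lesssim\beta^{-19/20}\langle z\rangle^{-3}$, equivalent to your $\|\langle z\rangle^{19/10}V\|_\infty\lesssim\beta^{-19/20}$). For \eqref{eq:1.1} the paper declares the proof ``almost the same'' as Lemma~\ref{EstDs} and omits it; your argument is the one the authors have in mind: for $k=2,\dots,5$ quote \eqref{est:termW}, \eqref{eq:estF2}, \eqref{eq:estN11}, \eqref{eq:nonlinearity} at $(m,n)=(\tfrac{11}{10},0)$ together with boundedness of $P^\alpha_2$ on the weighted norm, and for the commutator term $D^{(1)}_{2,0}$ use the identity $P^{\alpha}_{2}(1-P^{\alpha}_{2})=0$ to kill the scalar $-\alpha$ in $V$ so that only the benign factor $\frac{\alpha\beta z^2}{2(d-1)+\beta z^2}$ survives, then exploit the Gaussian decay and low degree of $(1-P^\alpha_2)\eta$. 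This mirrors the paper's treatment of $D^{(1)}_{3,0}$ with $P^\alpha_3$ replaced by $P^\alpha_2$; your write-up is in fact slightly more careful about why the coefficients of the low-degree polynomial $(1-P^\alpha_2)\eta$ are controlled by $\|e^{\alpha z^2/4}\eta\|_{3,0}$ (Cauchy--Schwarz against the Hermite eigenfunctions), a point the paper leaves to the reader. All the exponents check out: $\beta^{-19/20}\cdot\beta^{2}=\beta^{21/20}$ and $\beta^{1/20}\cdot\beta^{2}=\beta^{41/20}\le\beta^{31/20}$.
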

\begin{proof}
By the assumption on $B$ we have $\frac{1}{b}\lesssim
\frac{1}{\beta}$ hence
$$\langle z\rangle^{-\frac{11}{10}}\frac{1}{1+b(\tau(\sigma))z^{2}}\leq \langle z\rangle^{-\frac{11}{10}}{(1+b(\tau(\sigma))z^{2})^{-\frac{19}{20}}}\lesssim
{\beta^{-19/20}(\tau(\sigma))}\langle z\rangle^{-3}$$ which together
with the definition of $V$ after (~\ref{eq:eta}) and the estimate in
(~\ref{eq:compareXiEta}) yields
$$
\begin{array}{lll}
\|\exp{\frac{\alpha}{4}z^{2}}V\eta(\sigma)\|_{\frac{11}{10},0}&\lesssim&
\|\frac{1}{{1+b(\tau(\sigma))z^{2}}}\exp{\frac{\alpha}{4}z^{2}}\eta(\sigma)\|_{\frac{11}{10},0}\\
&\lesssim & \beta^{-\frac{19}{20}}(\tau(\sigma))\|\exp{\frac{\alpha}{4}z^{2}}\eta(\sigma)\|_{3,0}\\
&\lesssim &\beta^{\frac{21}{20}}(\tau(\sigma))M_{3,0}(T).
\end{array}
$$
This gives (~\ref{eq:estF3}). The proof of (~\ref{eq:1.1}) is almost the same to that of (~\ref{eq:MplusN3}) and, thus omitted.
\end{proof}
Rewrite (~\ref{eq:eta4}) to have
$$
P^{\alpha}_{2} \eta(S)=\exp{- L_{\alpha}S} P^{\alpha}_{2}
\eta(0)
+\int_{0}^{S}\exp{- L_{\alpha}(S-\sigma)} P^{\alpha}_{2}[-
V\eta+\displaystyle\sum_{k=2}^{5}D_{2,0}^{(k)}]d\sigma,
$$ where, recall the definition
of $S$ in (~\ref{T2}). By the propagator estimate of
$\exp{- L_{\alpha}\sigma} P^{\alpha}_{2} $ in
(~\ref{eq:estproject2}), we have
\begin{equation}\label{K123s}
\begin{array}{lll}
\|\exp{\frac{\alpha}{4}z^{2}} P^{\alpha}_{2}
\eta(S)\|_{\frac{11}{10},0}\lesssim
K_{0}+K_{1}+K_{2}
\end{array}
\end{equation} where the functions $K_{n}$'s are given by
$$K_{0}:=\exp{-\alpha  S}\|\exp{\frac{\alpha}{4}z^{2}}\eta(0)\|_{\frac{11}{10},0},\ \
K_{1}:=\int_{0}^{S}\exp{-\alpha
(S-\sigma)}\|\exp{\frac{\alpha}{4}z^{2}}V\eta(\sigma)\|_{\frac{11}{10},0}d\sigma,$$
$$K_{2}:=\displaystyle\sum_{k=2}^{5}\int_{0}^{S}\exp{-\alpha (S-\sigma)}\|\exp{\frac{\alpha}{4}z^{2}}D_{2,0}^{(k)}\|_{\frac{11}{10},0}d\sigma.$$
Next, we estimate $K_{n}$'s, $n=0,1,2.$
\begin{itemize}
\item[(K0)]
Equation (~\ref{eq:compareXiEta}) and the slow decay of $\beta$
yield
\begin{equation}\label{eq:estK0}
K_{0}\lesssim\beta^{\frac{21}{20}}(T)\beta^{-\frac{21}{20}}(0)\|\exp{\frac{\alpha}{4}z^{2}}\eta(0)\|_{\frac{11}{10},0}\lesssim
\beta^{\frac{21}{20}}(T)M_{\frac{11}{10},0}(0).
\end{equation}
\item[(K1)]  The estimate in
(~\ref{eq:estF3}) and the integral estimate in (~\ref{INT}) imply
\begin{equation}\label{EstK1}
K_{1}\lesssim \int_{0}^{S}\exp{-\alpha
(S-\sigma)}\beta^{\frac{21}{20}}(\tau(\sigma))d\sigma
M_{3,0}(T)\lesssim \beta^{\frac{21}{20}}(T) M_{3,0}(T).
\end{equation}
\item[(K2)]
The estimates of $D_{2,0}^{(k)}$, $k=2,3,4,5$, in Equation
(~\ref{eq:1.1}) yield the bound
\begin{equation}\label{eq:estK2}
\begin{array}{lll}
K_{2}&\lesssim & \int_{0}^{S}\exp{-\alpha (S-\sigma)}
 \beta^{\frac{21}{20}+\frac{1}{2}}(\tau(\sigma))d\sigma P(M(T),A(T))\\
 &\lesssim &\beta^{\frac{21}{20}+\frac{1}{2}}(T)P(M(T),A(T)).
\end{array}
\end{equation}

\end{itemize}
Collecting the estimates (~\ref{K123s})-(~\ref{eq:estK2}) we have
\begin{equation}\label{FinalStep}
\beta^{-\frac{21}{20}}(T)\|\exp{\frac{\alpha}{4}z^{2}}
P^{\alpha}_{2} \eta(\tau(S))\|_{\frac{11}{10},0} \lesssim
M_{\frac{11}{10},0}(0)+M_{3,0}(T)+\beta^{ \frac{1}{2}
}(0)P(M(T),A(T)).
\end{equation}
By Equation (~\ref{eq:AgreeEnd}) we have
$$
\beta^{-\frac{21}{20}}(T)\| \phi(T)\|_{\frac{11}{10},0}
=\beta^{-\frac{21}{20}}(T)\|\exp{\frac{\alpha z^{2}}{4}}
P^{\alpha}_{2} \eta(S)\|_{\frac{11}{10},0}
$$
which together with (~\ref{FinalStep}) and the definition of
$M_{\frac{11}{10},0}$ implies
$$
\begin{array}{lll}
M_{\frac{11}{10},0}(T)&\lesssim &
M_{\frac{11}{10},0}(0)+M_{3,0}(T)+\beta^{ \frac{1}{2}
}(0)P(M(T),A(T)).
\end{array}
$$
Since $T$ is an arbitrary time, the proof is complete.
\section{Proof of Equation (~\ref{eq:M21})}\label{SEC:EstM21}
By Equation (~\ref{eq:eta}) and the observation
\begin{equation}\label{eq:obser}
\exp{-\frac{\alpha
z^{2}}{4}}\partial_{z}[\exp{\frac{\alpha}{4}z^{2}}g]=(\partial_{z}+\frac{\alpha}{2}z)g
\end{equation}
for any function $g$, the function
$P^{\alpha}_{2}(\partial_{z}+\frac{\alpha}{2}z)\eta$ satisfies
\begin{equation}\label{eq:oneD}
\frac{d}{d\sigma}P^{\alpha}_{2}(\partial_{z}+\frac{\alpha}{2}z)\eta=
-P^{\alpha}_{2}(\mathcal{L}_{\alpha}+\alpha)P^{\alpha}_{2}
(\partial_{z}+\frac{\alpha}{2}z)\eta
+\sum_{k=1}^{5}D^{(k)}_{2,1}+D_{6}
\end{equation} with $D^{(k)}_{2,1}$
defined after (~\ref{EQ:eta2}) and
$$D_{6}:=-P^{\alpha}_{2}\eta\partial_{z}V.$$ Thus applying the
operator $\partial_{z}+\frac{\alpha}{2}z$ leads to the equation with
improved linear part.
\begin{lemma}
If $A(\tau),\ B(\tau)\leq \beta^{- \frac{1}{4} }(\tau)$, then we
have
\begin{equation}\label{eq:estD6}
\|\exp{\frac{\alpha}{4}z^{2}}D_{6}(\sigma)\|_{2,0}\lesssim
\beta^{2}(\tau(\sigma))M_{3,0}(T).
\end{equation}
\begin{equation}\label{eq:estD21}
\|e^{\frac{\alpha
z^{2}}{4}}\sum_{k=1}^{5}D_{2,1}^{(k)}(\sigma)\|_{2,0}\lesssim
\beta^{5/2}(\tau(\sigma))P(M(T),A(T)).
\end{equation}
\end{lemma}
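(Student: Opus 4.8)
The plan is to prove the two displayed inequalities separately: estimate \eqref{eq:estD6} is elementary, and in \eqref{eq:estD21} the terms $D^{(k)}_{2,1}$ with $k=2,3,4,5$ follow mechanically from the source bounds already established, while the single term $D^{(1)}_{2,1}$ (the one carrying the potential $V$) is the only point that needs an extra idea. For \eqref{eq:estD6} I would start from the explicit form, following from the definition of $V$ after \eqref{eq:eta}, $\partial_z V=\frac{4(d-1)\alpha\,\beta(\tau(\sigma))\,z}{\left(2(d-1)+\beta(\tau(\sigma))z^2\right)^2}$, which gives the elementary pointwise bound $\|\langle z\rangle\,|\partial_zV|\|_\infty\lesssim1$ (treat $|z|\le1$ and $|z|\ge1$ separately, using $s(1+s)^{-2}\le\tfrac14$ in the second range). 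Write $D_6=-(\partial_zV)\eta+\overline P^{\alpha}_2\left[(\partial_zV)\eta\right]$. The first piece satisfies $\|e^{\alpha z^2/4}(\partial_zV)\eta\|_{2,0}\le\|\langle z\rangle\,|\partial_zV|\|_\infty\,\|e^{\alpha z^2/4}\eta\|_{3,0}\lesssim\beta^{2}(\tau(\sigma))M_{3,0}(T)$ by \eqref{eq:compareXiEta} and $M_{3,0}(\tau(\sigma))\le M_{3,0}(T)$; since the solutions, hence $\eta$, are even, $(\partial_zV)\eta$ is odd, so $\overline P^{\alpha}_2\left[(\partial_zV)\eta\right]$ is a multiple of $\phi_{1,\alpha}$ with coefficient $\lesssim\int|\partial_zV||\eta||\phi_{1,\alpha}|\lesssim\beta\,\|e^{\alpha z^2/4}\eta\|_{3,0}\lesssim\beta^{3}M_{3,0}(T)$, and $\|e^{\alpha z^2/4}\phi_{1,\alpha}\|_{2,0}\lesssim1$, so this piece is of lower order. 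Adding gives \eqref{eq:estD6}.

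For \eqref{eq:estD21} with $k=2,3,4,5$, using $e^{-\alpha z^2/4}\partial_z[e^{\alpha z^2/4}h]=(\partial_z+\tfrac{\alpha}{2}z)h$ (see \eqref{eq:obser}) these terms are $D^{(k)}_{2,1}=P^{\alpha}_2(\partial_z+\tfrac{\alpha}{2}z)h_k$ with $h_2=\mathcal W\eta$, $h_3=\mathcal F(a,b)$, $h_4=\mathcal N_1$, $h_5=\mathcal N_2$. Split $P^\alpha_2=1-\overline P^\alpha_2$: for the unprojected part $e^{\alpha z^2/4}(\partial_z+\tfrac{\alpha}{2}z)h_k=\partial_z(e^{\alpha z^2/4}h_k)$, hence $\|e^{\alpha z^2/4}(\partial_z+\tfrac{\alpha}{2}z)h_k\|_{2,0}=\|e^{\alpha z^2/4}h_k\|_{2,1}$; for the projected part, $\overline P^\alpha_2(\partial_z+\tfrac{\alpha}{2}z)h_k$ lies in the span of $\phi_{0,\alpha},\phi_{1,\alpha}$ with coefficients equal, after one integration by parts and the identities $(-\partial_z+\tfrac{\alpha}{2}z)\phi_{j,\alpha}\propto\phi_{j+1,\alpha}$, to constants times $\langle h_k,\phi_{1,\alpha}\rangle$ and $\langle h_k,\phi_{2,\alpha}\rangle$, each $\lesssim\|e^{\alpha z^2/4}h_k\|_{3,0}$. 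Both $\|e^{\alpha z^2/4}h_k\|_{2,1}$ and $\|e^{\alpha z^2/4}h_k\|_{3,0}$ are $\lesssim\beta^{5/2}(\tau(\sigma))P(M(T),A(T))$ by \eqref{est:termW} for $\mathcal W\eta$, \eqref{eq:estF2} for $\mathcal F$, \eqref{eq:estN11} for $\mathcal N_1$, and \eqref{eq:nonlinearity} for $\mathcal N_2$, evaluated at $(m,n)=(2,1)$ and $(3,0)$. This disposes of $k=2,\dots,5$.

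The term $D^{(1)}_{2,1}=-P^{\alpha}_2V\,\overline P^{\alpha}_2(\partial_z+\tfrac{\alpha}{2}z)\eta$ is the main obstacle: bounding $\overline P^{\alpha}_2(\partial_z+\tfrac{\alpha}{2}z)\eta$ crudely by $\|e^{\alpha z^2/4}\eta\|_{3,0}\sim\beta^{2}M_{3,0}$ only yields $\beta^{2}$, whereas $\beta^{5/2}$ is needed, so the extra half power must come from near-orthogonality of $\eta$ to the low modes. By evenness, $\overline P^{\alpha}_2(\partial_z+\tfrac{\alpha}{2}z)\eta=\langle(\partial_z+\tfrac{\alpha}{2}z)\eta,\phi_{1,\alpha}\rangle\,\phi_{1,\alpha}$, and integrating by parts together with $(-\partial_z+\tfrac{\alpha}{2}z)\phi_{1,\alpha}\propto\phi_{2,\alpha}$ reduces the coefficient to $\langle\eta,\phi_{2,\alpha}\rangle$. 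Since $\eta$ is the rescaling \eqref{NewFun} of $\xi$ and $\xi\perp\phi_{2,a(\tau)}$ by \eqref{eqn:split3}, a change of variables produces a rescaled Gaussian-polynomial $\Psi_\alpha$ with $\langle\eta,\Psi_\alpha\rangle=0$; by Proposition~\ref{NewTrajectory} ($\lambda/\lambda_1=1+\O{\beta}$) and $|a(\tau)-\alpha|\lesssim\beta$ from \eqref{CauchA}, one has $\phi_{2,\alpha}-c\Psi_\alpha=\O{\beta}$ pointwise with Gaussian decay for a suitable $c=1+\O{\beta}$, hence $|\langle\eta,\phi_{2,\alpha}\rangle|=|\langle\eta,\phi_{2,\alpha}-c\Psi_\alpha\rangle|\lesssim\beta\,\|e^{\alpha z^2/4}\eta\|_{3,0}\lesssim\beta^{3}M_{3,0}(T)$ by \eqref{eq:compareXiEta}. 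Multiplying by $V$ (which is bounded, $|V|\lesssim(1+\beta z^2)^{-1}$) and applying $P^{\alpha}_2$ costs only a constant in the weighted norm, so $\|e^{\alpha z^2/4}D^{(1)}_{2,1}\|_{2,0}\lesssim\beta^{3}M_{3,0}(T)\lesssim\beta^{5/2}(\tau(\sigma))P(M(T))$. Combining this with the $k=2,\dots,5$ bounds yields \eqref{eq:estD21}.
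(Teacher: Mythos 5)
Your bounds for $D_6$ and for $D^{(k)}_{2,1}$ with $k=2,\ldots,5$ follow the route the paper has in mind (pointwise weight bound on $\partial_z V$, and the source bounds \eqref{est:termW}, \eqref{eq:estF2}, \eqref{eq:estN11}, \eqref{eq:nonlinearity} at $(m,n)=(2,1)$ and $(3,0)$) and are correct. For $D^{(1)}_{2,1}$, however, you take a genuinely different path from what the paper indicates. The paper says the proof is ``almost the same'' as that of \eqref{eq:MplusN3}, where $D^{(1)}_{3,0}$ is handled by subtracting the constant part of $V$: since $-V=\alpha-\frac{\alpha\beta z^2}{2(d-1)+\beta z^2}$ and $P^\alpha_m\,(\mathrm{const})\,\overline P^\alpha_m=0$, only the remainder survives, and the weight bound $\|\langle z\rangle^{-1}\frac{\beta z^2}{2(d-1)+\beta z^2}\|_\infty\lesssim\beta^{1/2}$ supplies the missing half power, giving (via the analogue of \eqref{eq:estPro}) $\|e^{\alpha z^2/4}D^{(1)}_{2,1}\|_{2,0}\lesssim\beta^{1/2}\|e^{\alpha z^2/4}\eta\|_{2,1}\lesssim\beta^{5/2}M_{2,1}(T)$. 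Your claim that the missing half power ``must come from near-orthogonality of $\eta$ to the low modes'' is therefore too strong: the constant-subtraction device already produces it without any improvement on the crude bound for $\langle\eta,\phi_{2,\alpha}\rangle$. What you do instead — showing $|\langle\eta,\phi_{2,\alpha}\rangle|\lesssim\beta^{3}M_{3,0}(T)$ by pulling the exact orthogonality $\xi\perp\phi_{2,a(\tau)}$ through the rescaling \eqref{NewFun} — is correct and in fact yields the slightly sharper power $\beta^3$, but it is more work, and the one-line assertion ``$\phi_{2,\alpha}-c\Psi_\alpha=\O{\beta}$ pointwise with Gaussian decay'' hides a real subtlety: since $\alpha-a\,(\lambda_1/\lambda)^2=\O{\beta}$, the mismatch in the Gaussian exponents is $\O{\beta z^2}$, not $\O{\beta}$ uniformly, so the honest pointwise bound is $\lesssim\min(1,\beta z^2)\langle z\rangle^2 e^{-cz^2}$; the Gaussian tails make the resulting integral against $|\eta|$ still $\O{\beta}\,\|e^{\alpha z^2/4}\eta\|_{3,0}$, but that step deserves to be spelled out. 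The paper's shorter constant-subtraction route avoids this bookkeeping entirely and already gives the stated $\beta^{5/2}$.
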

The proofs are the same as those of (~\ref{eq:estF3}) and (~\ref{eq:MplusN3}) and, thus are omitted.

By the Duhamel principle we rewrite Equation (~\ref{eq:oneD}) as
$$
P^{\alpha}_{2}(\partial_{z}+\frac{\alpha}{2}z)\eta(S)=P^{\alpha}_{2}U_{2}(S,0)\exp{-\alpha
S} P^{\alpha}_{2} (\partial_{z}+\frac{\alpha}{2}z)\eta(0)+
\int_{0}^{S}P^{\alpha}_{2} U_{2}(S,\sigma) \exp{-\alpha
(S-\sigma)}P^{\alpha}_{2}[\displaystyle\sum_{n=1}^{5}
D_{2,1}^{(n)}+D_6]d\sigma,
$$ where $U_{2}$ is defined and estimated in
(~\ref{eq:OperatorWithV}), from which we have
\begin{equation}\label{eq:estEta}
\|\exp{\frac{\alpha}{4}z^{2}}P^{\alpha}_{2}(\partial_{z}+\frac{\alpha}{2}z)\eta(S)]\|_{2,0}\lesssim
Y_{1}+Y_{2}+Y_{3}
\end{equation} with $$Y_{1}:=\exp{-c_{0}S}\|
\exp{\frac{\alpha}{4}z^{2}}\eta(0)\|_{2,1};$$
$$Y_{2}:=\int_{0}^{S}\exp{-c_{0}(S-\sigma)}\displaystyle\sum_{k=1}^{5}\|\exp{\frac{\alpha
z^{2}}{4}}D_{2,1}^{(k)}(\sigma)\|_{2,0}d\sigma;$$
$$Y_{3}:=\int_{0}^{S}\exp{-c_{0}(S-\sigma)}\|\exp{\frac{\alpha
z^{2}}{4}} D_{6}(\sigma)\|_{2,0}d\sigma.$$

Next, we estimate $Y_{n}, \ n=1,2,3.$ By (~\ref{eq:estD21})
and the integral estimate (~\ref{INT}) we have
\begin{equation}\label{eq:Y2}
Y_{2}\lesssim
\int_{0}^{S}\exp{-c_{0}(S-\sigma)}\beta^{5/2}(\tau(\sigma))d\sigma
P(M(T),A(T))\lesssim \beta^{5/2}(T)P(M(T),A(T));
\end{equation} by
(~\ref{eq:estD6}). By similar reasoning,
\begin{equation}
Y_{3}\lesssim
\beta^{2}(T)M_{3,0}(T);
\end{equation} and by
(~\ref{eq:compareXiEta}) and the slow decay of $\beta$,
\begin{equation}\label{eq:Y1}
Y_{1}\lesssim \exp{-c_{0}S}\|\phi(\cdot,0)\|_{2,1}\lesssim
\beta^{2}(T)M_{2,1}(0).
\end{equation} Collecting the estimates (~\ref{eq:estEta})-(~\ref{eq:Y1}) we obtain $$\beta^{-2}(T)\|\exp{\frac{\alpha
z^{2}}{4}}P^{\alpha}_{2}(\partial_{z}+\frac{\alpha}{2}z)\eta(S)]\|_{2,0}\lesssim
M_{2,1}(0)+M_{3,0}(T)+\beta^{ \frac{1}{2} }(0)P(M(T),A(T)).$$
Moreover by Equation (~\ref{eq:AgreeEnd}) we have
$\|\exp{\frac{\alpha
z^{2}}{4}}P^{\alpha}_{2}(\partial_{z}+\frac{\alpha}{2}z)\eta(S)]\|_{2,0}=\|\phi(T)\|_{2,1}$.
Thus by the definition of $M_{2,1}$
$$M_{2,1}(T)\lesssim
M_{2,1}(0)+M_{3,0}(T)+\beta^{ \frac{1}{2} }(0)P(M(T),A(T))$$ which
together with fact that $T$ is arbitrary implies (~\ref{eq:M21}).
\section{Proof of Equation (~\ref{eq:M12})}\label{SEC:estM12}
By Equation (~\ref{eq:eta}) the function
$P^{\alpha}_{1}(\partial_{z}+\frac{\alpha}{2}z)^{2} \eta$ satisfies
the equation
\begin{equation}\label{eq:twoD}
\begin{array}{lll}
\frac{d}{d\sigma}P^{\alpha}_{1}(\partial_{z}+\frac{\alpha}{2}z)^{2}\eta&=&-P^{\alpha}_{1}(\mathcal{L}_{\alpha}+2\alpha)P^{\alpha}_{1}(\partial_{z}+\frac{\alpha}{2}z)^{2}\eta+P^{\alpha}_{1}\displaystyle\sum_{k=1}^{5}D_{1,2}^{(k)}+D_{7}
\end{array}
\end{equation} with $D^{(k)}_{1,2}$
defined after (~\ref{EQ:eta2}) and
$$D_{7}:=-P^{\alpha}_{1}\exp{-\frac{\alpha
z^{2}}{4}}[\exp{\frac{\alpha}{4}z^{2}}\eta\partial_{z}^{2}V+2\partial_{z}[\exp{\frac{\alpha}{4}z^{2}}\eta]\partial_{z}V].$$
\begin{lemma}
If $ A(\tau),\ B(\tau)\leq \beta^{- \frac{1}{4} }(\tau)$, then we
have
\begin{equation}\label{eq:estD7}
\|\exp{\frac{\alpha}{4}z^{2}}D_{7}(\sigma)\|_{1,0}\lesssim
\beta^{2}(\tau(\sigma))[M_{3,0}(T)+M_{2,1}(T)],
\end{equation}
\begin{equation}\label{eq:estD12}
\|e^{\frac{\alpha}{4}z^{2}}\sum_{k=1}^{5}D_{1,2}^{(k)}(\sigma)\|_{1,0}
\lesssim
\beta^{2}(\tau(\sigma))[M_{3,0}(T)+M_{2,1}(T)]+\beta^{5/2}(\tau(\sigma))P(M(T),A(T)).
\end{equation}
\end{lemma}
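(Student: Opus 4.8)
The plan is to prove both bounds by the mechanism already used for the $D^{(k)}_{m,n}$- and $D_{6}$-lemmas of the preceding sections: split off the projection through $P^{\alpha}_{1}=1-\overline{P}^{\alpha}_{1}$, treat the identity part with the identity $e^{-\frac{\alpha}{4}z^{2}}\partial_{z}^{2}[e^{\frac{\alpha}{4}z^{2}}g]=(\partial_{z}+\tfrac{\alpha}{2}z)^{2}g$ together with the already-proved weighted bounds on $\mathcal{W}\eta$, $\mathcal{F}$, $N_{1}$, $\mathcal{N}_{2}$ from~(\ref{est:termW}), (\ref{eq:estF2}), (\ref{eq:estN11})--(\ref{eq:estN12}) and~(\ref{eq:nonlinearity}), and dispose of the low-mode correction $\overline{P}^{\alpha}_{1}$ by integration by parts. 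The preliminary facts I would record first are: the elementary pointwise bounds $|\partial_{z}V|\lesssim\beta|z|(1+\beta z^{2})^{-2}$ and $|\partial_{z}^{2}V|\lesssim\beta(1+\beta z^{2})^{-2}$, hence $\langle z\rangle|\partial_{z}V|\lesssim 1$ and $\langle z\rangle^{2}|\partial_{z}^{2}V|\lesssim 1$; and, from~(\ref{NewFun}) and Proposition~\ref{NewTrajectory}, $e^{\frac{\alpha}{4}z^{2}}\eta=\tfrac{\lambda}{\lambda_{1}}\phi$ with $\tfrac{\lambda}{\lambda_{1}}=1+O(\beta)$, so that $\partial_{z}[e^{\frac{\alpha}{4}z^{2}}\eta]=\partial_{y}\phi$ and $\langle z\rangle\sim\langle y\rangle$.

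For~(\ref{eq:estD7}) I would feed in the growth bounds $|\phi|\le\|\phi\|_{3,0}\langle y\rangle^{3}\lesssim\beta^{2}(\tau(\sigma))M_{3,0}(T)\langle z\rangle^{3}$ and $|\partial_{y}\phi|\le\|\phi\|_{2,1}\langle y\rangle^{2}\lesssim\beta^{2}(\tau(\sigma))M_{2,1}(T)\langle z\rangle^{2}$, which come from the definition~(\ref{eq:majorants}) of the majorants and monotonicity of $M_{m,n}$. Inserting these into the definition of $D_{7}$ and using the $V$-bounds gives, for the identity part, $\langle z\rangle^{-1}|e^{\frac{\alpha}{4}z^{2}}(\eta\,\partial_{z}^{2}V+2\partial_{z}(e^{\frac{\alpha}{4}z^{2}}\eta)\partial_{z}V)|\lesssim\beta^{2}M_{3,0}(T)\langle z\rangle^{2}|\partial_{z}^{2}V|+\beta^{2}M_{2,1}(T)\langle z\rangle|\partial_{z}V|\lesssim\beta^{2}(\tau(\sigma))[M_{3,0}(T)+M_{2,1}(T)]$; the $\overline{P}^{\alpha}_{1}$-correction is a scalar multiple of $\langle\phi_{0,\alpha},\cdot\rangle$ of the same quantity, bounded by the same Gaussian integral, so~(\ref{eq:estD7}) follows.

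For~(\ref{eq:estD12}) I would split each $D^{(k)}_{1,2}$ by $P^{\alpha}_{1}=1-\overline{P}^{\alpha}_{1}$. For $k=2,3,4,5$ the identity part of $e^{\frac{\alpha}{4}z^{2}}D^{(k)}_{1,2}$ is $\partial_{z}^{2}[e^{\frac{\alpha}{4}z^{2}}T_{k}]$ with $(T_{2},T_{3},T_{4},T_{5})=(\mathcal{W}\eta,\mathcal{F}(a,b),\mathcal{N}_{1},\mathcal{N}_{2})$, so its $\|\cdot\|_{1,0}$-norm equals $\|e^{\frac{\alpha}{4}z^{2}}T_{k}\|_{1,2}$, which by~(\ref{est:termW}), (\ref{eq:estF2}), (\ref{eq:estN12}), (\ref{eq:nonlinearity}) is $\lesssim\beta^{2}[M_{3,0}(T)+M_{2,1}(T)]+\beta^{5/2}(\tau(\sigma))P(M(T),A(T))$. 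For the $\overline{P}^{\alpha}_{1}$-part I would rewrite $\langle\phi_{0,\alpha},e^{-\frac{\alpha}{4}z^{2}}\partial_{z}^{2}[e^{\frac{\alpha}{4}z^{2}}T_{k}]\rangle$ and integrate by parts twice (the Gaussian weight kills all boundary terms), landing a constant times $\int(\alpha^{2}z^{2}-\alpha)e^{-\alpha z^{2}/4}T_{k}\,dz$, which is $\lesssim\|e^{\frac{\alpha}{4}z^{2}}T_{k}\|_{3,0}$ and hence controlled by the $(m,n)=(3,0)$ cases of the same four estimates. For $k=1$ the point is the cancellation $-P^{\alpha}_{1}V+P^{\alpha}_{1}VP^{\alpha}_{1}=-P^{\alpha}_{1}V\overline{P}^{\alpha}_{1}$, giving $D^{(1)}_{1,2}=-P^{\alpha}_{1}V\,\overline{P}^{\alpha}_{1}(\partial_{z}+\tfrac{\alpha}{2}z)^{2}\eta$; the scalar $\langle\phi_{0,\alpha},(\partial_{z}+\tfrac{\alpha}{2}z)^{2}\eta\rangle$ is $\lesssim\|e^{\frac{\alpha}{4}z^{2}}\eta\|_{3,0}\lesssim\beta^{2}(\tau(\sigma))M_{3,0}(T)$ by the same integration by parts, and $e^{\frac{\alpha}{4}z^{2}}V\phi_{0,\alpha}$ is bounded, so this term is $\lesssim\beta^{2}(\tau(\sigma))M_{3,0}(T)$. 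Summing over $k$ and absorbing $\beta^{5/2}M_{\frac{11}{10},0}^{2}(T)$ into $\beta^{5/2}P(M(T),A(T))$ yields~(\ref{eq:estD12}).

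The bulk terms ($k=2,\dots,5$) are essentially the $(m,n)=(1,2)$ instance of the computation already carried out for $D^{(k)}_{3,0}$ and $D^{(k)}_{2,1}$, so the genuinely new bookkeeping — and the step I expect to be the main, though minor, obstacle — is the treatment of the low-mode corrections: one must verify that every $\overline{P}^{\alpha}_{1}$-contribution, which a priori carries two $z$-derivatives of Gaussian-weighted quantities, can be integrated by parts down to a zero-derivative ($\|\cdot\|_{3,0}$) estimate without losing a power of $\beta$, and in particular that the full cancellation in $D^{(1)}_{1,2}$ is what turns an a priori $O(1)$ term into an $O(\beta^{2})$ one. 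Everything else is covered by the cited lemmas.
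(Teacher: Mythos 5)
The paper omits the proof of this lemma, stating only that it is "almost the same as those of \eqref{eq:estF3} and \eqref{eq:MplusN3}." Your reconstruction is correct and follows exactly the pattern those model proofs establish: the crucial cancellation $-P_1^\alpha V+P_1^\alpha V P_1^\alpha=-P_1^\alpha V\overline{P}_1^\alpha$ for the $k=1$ term mirrors the paper's treatment of $D^{(1)}_{3,0}$, your pointwise bounds $\langle z\rangle|\partial_z V|\lesssim 1$ and $\langle z\rangle^2|\partial_z^2 V|\lesssim 1$ combined with the $(3,0)$ and $(2,1)$ majorant bounds on $\phi$ give \eqref{eq:estD7}, and the identity parts of the $k=2,\dots,5$ terms reduce via $e^{-\frac{\alpha}{4}z^2}\partial_z^2 e^{\frac{\alpha}{4}z^2}=(\partial_z+\frac{\alpha}{2}z)^2$ to the $(1,2)$ cases of the bounds on $\mathcal{W}\eta$, $\mathcal{F}$, $\mathcal{N}_1$, $\mathcal{N}_2$, with the $\beta^{2}[M_{3,0}+M_{2,1}]$ contribution coming, as you note, from \eqref{eq:estN12}. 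The only place you do slightly more work than strictly necessary is in treating the low-mode corrections: you integrate by parts twice to land on a $\|\cdot\|_{3,0}$ bound, but one can equally well bound the scalar $\langle\phi_{0,\alpha},e^{-\frac{\alpha}{4}z^2}\partial_z^2[e^{\frac{\alpha}{4}z^2}T_k]\rangle$ directly against $\|e^{\frac{\alpha}{4}z^2}T_k\|_{1,2}$ since the Gaussian in $\phi_{0,\alpha}$ already absorbs the $\langle z\rangle$ growth; either route gives the same final estimate. This is a cosmetic distinction, and both it and the one you wrote are consistent with the recipe the paper invokes.
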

The proofs are almost the same as those of (~\ref{eq:estF3}) and (~\ref{eq:MplusN3}), thus omitted.

By Duhamel principle we rewrite Equation (~\ref{eq:twoD}) as
$$
\begin{array}{lll}
P^{\alpha}_{1}(\partial_{z}+\frac{\alpha}{2}z)^{2}\eta(S)&=&P^{\alpha}_{1}U_{1}(S,0)\exp{-2\alpha
S} P^{\alpha}_{1}
(\partial_{z}+\frac{\alpha}{2}z)^{2}\eta(0)\\
& &+\int_{0}^{S}P^{\alpha}_{1} U_{1}(S,\sigma)\exp{-2\alpha
(S-\sigma)} P^{\alpha}_{1}[ \displaystyle\sum_{n=1}^{5}
D_{1,2}^{(n)}+D_7]d\sigma
\end{array}
$$ where, recall $U_{1}(t,s)$ is defined and estimated in
(~\ref{eq:OperatorWithV}), from which we have
\begin{equation}\label{eq:Eta12}
\begin{array}{lll}
\|\exp{\frac{\alpha
z^{2}}{4}}P^{\alpha}_{1}(\partial_{z}+\frac{\alpha}{2}z)^{2}\eta(S)\|_{1,0}\lesssim
Z_{1}+Z_{2}
\end{array}
\end{equation} with $$Z_{1}:=\exp{-c_{0}S}\| \exp{\frac{\alpha}{4}z^{2}}\eta(0)\|_{1,2};$$
$$Z_{2}:=\int_{0}^{S}\exp{-c_{0}(S-\sigma)}\displaystyle\sum_{k=1}^{5}\|\exp{\frac{\alpha
z^{2}}{4}}D_{1,2}^{(k)}(\sigma)\|_{1,0}d\sigma+\int_{0}^{S}\exp{-c_{0}(S-\sigma)}\|\exp{\frac{\alpha z^{2}}{4}}
D_{7}(\sigma)\|_{1,0}d \sigma.$$
 By (~\ref{eq:estD7}), (~\ref{eq:estD12}) we have
\begin{equation}
\begin{array}{lll}
Z_{2}&\lesssim&
\int_{0}^{S}\exp{-c_{0}(S-\sigma)}\beta^{5/2}(\tau(\sigma))
P(M(T),A(T))+\beta^{2}(\tau(\sigma))[M_{2,1}(T)+M_{3,0}(T)]d\sigma\\
&\lesssim&
\beta^{5/2}(T)P(M(T),A(T))+\beta^{2}(T)[M_{3,0}(T)+M_{2,1}(T)];
\end{array}
\end{equation} and the slow decay of $\beta$
\begin{equation}\label{eq:Z1}
Z_{1}\lesssim \exp{-c_{0}S}\|\phi(\cdot,0)\|_{1,2}\lesssim
\beta^{2}(T)M_{1,2}(0).
\end{equation} Estimates (~\ref{eq:Eta12})-(~\ref{eq:Z1}) yield the bound $$
\begin{array}{lll}
\beta^{-2}(T)\|\exp{\frac{\alpha
z^{2}}{4}}P^{\alpha}_{1}(\partial_{z}+\frac{\alpha}{2}z)^{2}\eta(S)\|_{1,0}
 \lesssim
M_{1,2}(0)+M_{3,0}(T)+M_{2,1}(T)+\beta^{ \frac{1}{2}
}(0)P(M(T),A(T)).
\end{array}
$$

By Equation (~\ref{eq:AgreeEnd}) we obtain $\|\exp{\frac{\alpha
z^{2}}{4}}P^{\alpha}_{1}(\partial_{z}+\frac{\alpha}{2}z)^{2}\eta(S)\|_{1,0}=\|\partial_{y}^{2}\phi(T)\|_{1,0}$
which together with the definition of $M_{1,2}$ yields
$$M_{1,2}(T)\lesssim
M_{1,2}(0)+M_{3,0}(T)+M_{2,1}(T)+\beta^{ \frac{1}{2}
}(0)P(M(T),A(T)).$$ Since $T$ is arbitrary (~\ref{eq:M12}) follows.
\def\cprime{$'$} \def\cprime{$'$} \def\cprime{$'$} \def\cprime{$'$}
  \def\cprime{$'$} \def\cprime{$'$}

\end{document}